\newtheorem{theorem}{Theorem}[section]
\newtheorem{corollary}[theorem]{Corollary}
\newtheorem{lemma}[theorem]{Lemma}
\newtheorem{proposition}[theorem]{Proposition}
\theoremstyle{definition}
\newtheorem{definition}{Definition}[section]
\theoremstyle{remark}
\newtheorem{remark}{Remark}[section]
\numberwithin{equation}{section}
\newcommand{\CC}{\mathbb{C}}
\newcommand{\RR}{\mathbb{R}}
\newcommand{\QQ}{\mathbb{Q}}
\newcommand{\KK}{\mathbbm{k}}
\newcommand{\Lk}{\mathfrak{k}}
\newcommand{\Lgl}{\mathfrak{gl}}
\newcommand{\Lf}{\mathfrak{f}}
\newcommand{\Lg}{\mathfrak{g}}
\newcommand{\Lh}{\mathfrak{h}}
\newcommand{\La}{\mathfrak{a}}
\newcommand{\Lb}{\mathfrak{b}}
\newcommand{\Lc}{\mathfrak{c}}
\newcommand{\Lz}{\mathfrak{z}}
\newcommand{\A}{\mathrm{A}}
\newcommand{\Diff}{\mathrm{Diff}}
\newcommand{\Iso}{\mathrm{Iso}}
\newcommand{\germ}{\mathrm{germ}}
\newcommand{\Kill}{\mathrm{Kill}}
\newcommand{\Vect}{\mathrm{Vect}}
\newcommand{\Hull}{\mathrm{Hull}}
\newcommand{\ev}{\mathrm{ev}}
\newcommand{\Ad}{\mathrm{Ad}}
\newcommand{\ad}{\mathrm{ad}}
\newcommand{\id}{\mathrm{id}}
\newcommand{\loc}{\mathrm{loc}}
\newcommand{\reg}{\mathrm{reg}}
\newcommand{\GL}{\mathrm{GL}}
\newcommand{\SO}{\mathrm{SO}}
\newcommand{\mt}{\mapsto}
\newcommand{\LL}{\mathscr{L}}
\newcommand{\sA}{\mathscr{A}}
\newcommand{\bG}{\mathbf{G}}
\newcommand{\bH}{\mathbf{H}}
\newcommand{\bN}{\mathbf{N}}
\newcommand{\bK}{\mathbf{K}}
\newcommand{\bS}{\mathbf{S}}
\newcommand{\bV}{\mathbf{V}}
\newcommand{\cV}{\mathcal{V}}
\newcommand{\cW}{\mathcal{W}}
\newcommand{\cG}{\mathcal{G}}
\newcommand{\cZ}{\mathcal{Z}}
\begin{document}

\title[Rigid geometric structures]
{Rigid geometric structures, isometric actions, and algebraic
quotients}

\author[J. An]{Jinpeng An}
\address{LMAM, School of Mathematical Sciences, Peking University, Beijing, 100871, China}
\email{anjinpeng@gmail.com}

%

\thanks{Research partially supported by NSFC grant 10901005 and FANEDD grant 200915.}

\begin{abstract}
By using a Borel density theorem for algebraic quotients, we prove a
theorem concerning isometric actions of a Lie group $G$ on a smooth
or analytic manifold $M$ with a rigid $\A$-structure $\sigma$. It
generalizes Gromov's centralizer and representation theorems to the
case where $R(G)$ is split solvable and $G/R(G)$ has no compact
factors, strengthens a special case of Gromov's open dense orbit
theorem, and implies that for smooth $M$ and simple $G$, if Gromov's
representation theorem does not hold, then the local Killing fields
on $\widetilde{M}$ are highly non-extendable. As applications of the
generalized centralizer and representation theorems, we prove (1) a
structural property of $\Iso(M)$ for simply connected compact
analytic $M$ with unimodular $\sigma$, (2) three results
illustrating the phenomenon that if $G$ is split solvable and large
then $\pi_1(M)$ is also large, and (3) two fixed point theorems for
split solvable $G$ and compact analytic $M$ with non-unimodular
$\sigma$.
\end{abstract}

\maketitle

\section{Notation and conventions}\label{S:notation}

Throughout this paper, $M$ denotes a connected $C^\varepsilon$
manifold, where $\varepsilon=\infty$ or $\omega$ (here $C^\omega$
means real analytic). In most cases, $M$ is endowed with a rigid
$C^\varepsilon$ geometric structure of algebraic type (abbreviated
as $\A$-structure) in the sense of Gromov \cite{Gr}. We denote the
universal cover of $M$ by $\widetilde{M}$, with covering map
$\pi:\widetilde{M}\to M$. For simplicity, sometimes we also denote
$\Gamma=\pi_1(M)$. If $f$ is a local $C^\varepsilon$ diffeomorphism
of $M$ defined around $x\in M$, we denote the germ of $f$ at $x$ by
$\langle f\rangle_x$. Similarly, if $v$ is a $C^\varepsilon$ local
vector field defined around $x$, we denote the germ of $v$ at $x$ by
$\langle v\rangle_x$.

Suppose that $M$ is endowed with a $C^\varepsilon$ geometric
structure $\sigma$. We denote by $\Iso(M)$ the $C^\varepsilon$
isometry group of $M$, by $\Iso^\germ(M)$ the groupoid of germs of
local $C^\varepsilon$ isometries of $M$, by $\Kill(M)$ the Lie
algebra of $C^\varepsilon$ Killing fields on $M$, and by
$\Kill^\germ_x(M)$ the Lie algebra of germs at $x$ of local
$C^\varepsilon$ Killing fields defined around $x$. We also denote
the evaluation map at $x$ from $\Kill(M)$ or $\Kill^\germ_x(M)$ to
$T_xM$ by the same symbol $\ev_x$. Note that if $\sigma$ is rigid,
then $\Iso(M)$ is a Lie group, $\Kill(M)$ and $\Kill^\germ_x(M)$ are
finite-dimensional.

\begin{remark}
We prefer to work with the groupoid $\Iso^\germ(M)$ rather than the
more commonly used pseudogroup $\Iso^\loc(M)$ of local isometries.
This makes the notion of Zariski hulls, which will be introduced
later, easier to define and use. The orbits of $\Iso^\germ(M)$ and
$\Iso^\loc(M)$ are the same. For basic facts concerning groupoids,
the reader may refer to \cite{MM}.
\end{remark}

We denote by $\LL$ the operation which takes a Lie group to its Lie
algebra. If a Lie group is denoted by a capital Latin letter, we
also denote its Lie algebra by the corresponding small Gothic
letter. If a group $G$ acts on a set $X$, we denote the set of
$G$-orbits in $X$ by $X/G$, denote the set of $G$-fixed points in
$X$ by $X^G$, and denote the stabilizer of $x\in X$ in $G$ by
$G(x)$. If $G$ is a Lie group and $G(x)$ is closed, we denote
$\Lg(x)=\LL(G(x))$.

Whenever we say a Lie group $G$ acts on $M$ by $C^\varepsilon$
isometries, we always assume that the action map $G\times M\to M$ is
$C^\varepsilon$. In this case, for every $x\in M$, the surjective
linear map $\iota_x:\Lg\to T_xGx$ defined by
$\iota_x(X)=\left.\frac{d}{dt}\right|_{t=0}\exp(-tX)x$ has kernel
$\Lg(x)$. We refer to the map $\iota:\Lg\to\Kill(M)$,
$\ev_x(\iota(X))=\iota_x(X)$ as the induced infinitesimal action of
$\Lg$ on $M$. It is well-known that $\iota$ is a Lie algebra
homomorphism and is $G$-equivariant with respect to the adjoint
representation and the natural representation $g\mt dg$ of $G$ in
$\Kill(M)$. We refer to $\cG=\iota(\Lg)$ as the Lie algebra of
Killing fields induced by the $G$-action. If $G$ is connected with
universal cover $\widetilde{G}$, then the $G$-action induces a
$C^\varepsilon$ isometric action of $\widetilde{G}$ on
$\widetilde{M}$. We denote by $\widetilde{\cG}$ the Lie algebra of
Killing fields induced by the $\widetilde{G}$-action on
$\widetilde{M}$, which is obviously equal to the image of $\cG$
under the natural injection $\Kill(M)\to\Kill(\widetilde{M})$.

Let $\KK$ be a locally compact non-discrete field. By a
$\KK$-variety (resp. $\KK$-group) we mean an algebraic variety
(resp. linear algebraic group) defined over $\KK$. We use a
subscript ``$0$" to denote the identity component of a $\KK$-group
(resp. Lie group) under the Zariski (resp. Hausdorff) topology. The
set (resp. group) of $\KK$-rational points of a $\KK$-variety (resp.
$\KK$-group) is indicated by a subscript ``$\KK$". Unless otherwise
specified, whenever a topological statement concerning such a set
(resp. group) is made, the topology is understood as the Hausdorff
one.

We refer to the set (resp. group) of $\RR$-rational points of an
$\RR$-variety (resp. $\RR$-group) as a real algebraic variety (resp.
group). A real algebraic group can be also recognized as an
algebraic subgroup of $\GL(\cV)$, where $\cV$ is some
finite-dimensional real vector space. If $G$ is a subgroup of
$\GL(\cV)$, we denote by $\overline{\overline{G}}$ the Zariski
closure of $G$ in $\GL(\cV)$. Note that if $G$ is a real algebraic
group, $G_0$ refers to the identity component of $G$ under the
Hausdorff topology.

Let $G$ be a Lie group, and let $\rho$ be a finite-dimensional real
representation of $G$. Motivated by \cite{Sh} (and \cite{BFM,Me}),
we say that $G$ is \emph{discompact with respect to $\rho$} (or
\emph{$\rho$-discompact} for short) if
$\overline{\overline{\rho(G)}}$ has no proper normal cocompact
algebraic subgroups. We make the convention that $G$ is
$\rho$-discompact if the representation space is $0$. Following
\cite{Kn}, if $G$ is connected and solvable, we say that $G$ is
\emph{split solvable} if all eigenvalues of $\Ad(g)$ are real for
every $g\in G$.

Let $\La$ be a (real) Lie algebra. We write $\La_1<\La$ (resp.
$\La_1\lhd\La$) to indicate that $\La_1$ is a subalgebra (resp.
ideal) of $\La$. Recall that a Lie algebra $\Lb$ is called a
subquotient of $\La$ if there exist $\La_1<\La$ and $\La_2\lhd\La_1$
such that $\Lb\cong\La_1/\La_2$. We denote this relation by
$\Lb\prec\La$. From the Levi decomposition, it is easy to see that
if $\Lb$ is semisimple and $\Lb\prec\La$, then $\La$ has a
subalgebra isomorphic to $\Lb$.

\section{Introduction}\label{S:introduction}

\subsection{Background}

Motivated by the work of Zimmer \cite{Zi86} on rigidity of group
actions (later called the Zimmer program), Gromov \cite{Gr}
introduced the notion of rigid geometric structures, and proved
several deep results which have had profound influence on the
geometry and dynamics of group actions (see
\cite{Ad,BFL,BL,Fi,La,Zi93,ZM} and the references therein). For
example, for a connected $C^\varepsilon$ manifold $M$ with a rigid
$C^\varepsilon$ $\A$-structure, where $\varepsilon=\infty$ or
$\omega$, Gromov proved the following fundamental theorems.

\begin{itemize}
\item \emph{Gromov's open dense orbit theorem}:
If $\Iso^\germ(M)$ has a dense orbit in $M$, then it has an open
dense orbit.
\item \emph{Gromov's centralizer theorem}: Suppose that $\varepsilon=\omega$, $M$ is compact,
and a connected noncompact simple Lie group $G$ acts faithfully,
analytically, and isometrically on $M$ and preserves a finite smooth
measure on $M$. Let $\cZ$ be the centralizer of $\widetilde{\cG}$ in
$\Kill(\widetilde{M})$. Then for a.e. $\tilde{x}\in\widetilde{M}$,
we have $T_{\tilde{x}}\widetilde{G}\tilde{x}\subset
\ev_{\tilde{x}}(\cZ)$.
\item \emph{Gromov's representation theorem}: Under the conditions of Gromov's centralizer theorem,
if $\rho:\pi_1(M)\to\GL(\cZ)$ is the representation induced by the
deck transformations, then $\overline{\overline{\rho(\pi_1(M))}}$
has a Lie subgroup locally isomorphic to $G$.
\end{itemize}
Note that rigid geometric structures generalize Cartan's structures
of finite type, and rigid $\A$-structures include pseudo-Riemannian
structures, linear connections, pseudo-Riemannian conformal
structures in dimension $\ge3$, etc. The reader may refer to
\cite{DG,FK,Fi,La,Zi93,ZM} for surveys of Gromov's theory. For
detailed discussions and further developments of the above theorems,
see, e.g., \cite{BFM,Be,BF,CQ03,CQ04,Du,Fe,FZ,Me,NZ,Ze00,Ze02}.

\subsection{The general theorem}

The first goal of this paper is to prove a general theorem which, in a
certain sense, unifies and generalizes the above three theorems of
Gromov. To state the result, we need to recall and introduce more
notation and terminology. Let $M$ be a connected $C^\varepsilon$
manifold with a rigid $C^\varepsilon$ $\A$-structure $\sigma$, where
$\varepsilon=\infty$ or $\omega$. It is proved in \cite{Gr} (see
also \cite{Be,Fe}) that there exists an open dense subset
$M_\reg\subset M$ invariant under $\Iso^\germ(M)$ such that for some
sufficiently large integer $k$, every $x\in M_\reg$ has a
neighborhood $U_x$ such that any infinitesimal isometry of order $k$
sending $x$ to $y\in U_x$ extends to a local $C^\varepsilon$
isometry. Moreover, if $\varepsilon=\omega$ and $M$ is compact, we
may take $M_\reg=M$. Throughout this paper, we fix $M_\reg$ for a
given $(M,\sigma)$, and set $M_\reg=M$ if $\varepsilon=\omega$ and
$M$ is compact.

Let a connected Lie group $G$ act on $M$ by $C^\varepsilon$
isometries. For a finite Borel measure $\mu$ on $M$, we denote
$G_\mu=\{g\in G\mid g_*(\mu)=\mu\}$, which is a closed subgroup of
$G$ (see, e.g., \cite[Prop. 1.1]{Ra}). We say a subset $M'\subset M$
is \emph{$G$-saturated} if $x\in M'$, $y\in M$, and
$\overline{Gx}=\overline{Gy}$ imply that $y\in M'$. The interest of
this notion lies in the fact that if a $G$-minimal set $M_0\subset
M$ intersects a $G$-saturated set $M'$, then $M_0\subset M'$.

Let $\cV$ be a subspace of $\Kill(M)$. We define various types of
centralizers of $\cV$ as follows.
\begin{itemize}
\item We say that
$\varphi\in\Iso^\germ(M)$ centralizes $\cV$ if $d\varphi(\langle
v\rangle_x)=\langle v\rangle_y$ for all $v\in\cV$, where $x$ and $y$
are the source and target of $\varphi$, respectively. We refer to
the subgroupoid of $\Iso^\germ(M)$ consisting of all germs
centralizing $\cV$ as the \emph{centralizer of $\cV$ in
$\Iso^\germ(M)$}, and denote it by $\Iso^\germ(M)^\cV$.
\item For $x\in M$, we refer to the
centralizer in $\Kill^\germ_x(M)$ of the image of $\cV$ under the
natural injection $\Kill(M)\to\Kill^{\germ}_x(M)$ (see Theorem
\ref{T:Gromov1}(3) below) as the \emph{centralizer of $\cV$ in
$\Kill^\germ_x(M)$}, and denote it by $\Kill^{\germ}_x(M)^\cV$.
\item We refer to
the centralizer in $\Kill(\widetilde{M})$ of the image of $\cV$
under the natural injection $\Kill(M)\to\Kill(\widetilde{M})$ as the
\emph{centralizer of $\cV$ in $\Kill(\widetilde{M})$}, and denote it
by $\Kill(\widetilde{M})^\cV$.
\end{itemize}
Note that the space $\cZ$ in Gromov's centralizer and representation
theorems coincides with $\Kill(\widetilde{M})^\cG$.

Now we assume that $\cV$ is $G$-invariant under the natural
representation of $G$ in $\Kill(M)$. Then we have a representation
$\rho_\cV:G\to\GL(\cV)$, $g\mt dg|_\cV$. We are interested in the
$\rho_\cV$-discompactness of $G$. For the $\cV\supset\cG$ case, we
have the following criterion.

\begin{lemma}\label{L:discompact}
Let $M$ and $G$ be as above, and let $\cV$ be a $G$-invariant
subspace of $\Kill(M)$ containing $\cG$.
\begin{itemize}
\item[(1)] If the radical $R(G)$ of $G$ is
split solvable, the semisimple group $G/R(G)$ has no compact
factors, and there exists a sequence
$\cG=\cV_0\subset\cV_1\subset\cdots\subset\cV_k=\cV$ of
$R(G)$-invariant subspaces of $\cV$ with $\dim\cV_i/\cV_{i-1}=1$,
then $G$ is $\rho_\cV$-discompact.
\item[(2)] If the $G$-action on $M$ is faithful, then the converse of (1)
also holds.
\end{itemize}
\end{lemma}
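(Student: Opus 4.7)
My plan is based on the characterization that a connected real algebraic group $H$ is discompact if and only if its solvable radical $R(H)$ is $\RR$-split (upper-triangularizable over $\RR$) and $H/R(H)$ has no compact factors, since any nontrivial algebraic epimorphism from $H$ onto a compact algebraic group must factor through either a non-$\RR$-split torus in $R(H)$ or a compact factor of $H/R(H)$. I will match this dichotomy against the hypotheses on $G$.

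For (1): because $\iota:\Lg\to\Kill(M)$ is $G$-equivariant with respect to $\Ad$, the subspace $\cG$ is $G$-invariant and the $R(G)$-action on it is induced from $\Ad|_{R(G)}$, which has all real eigenvalues by the split-solvability of $R(G)$. Hence $\cG$ admits an $R(G)$-invariant flag with one-dimensional quotients. Concatenating this flag with the prescribed filtration $\cG=\cV_0\subset\cdots\subset\cV_k=\cV$ yields a common $R(G)$-invariant flag of $\cV$ with one-dimensional quotients, so $\rho_\cV(R(G))$ lies in an upper-triangular subgroup of $\GL(\cV)$. Therefore $N:=\overline{\overline{\rho_\cV(R(G))}}$ is $\RR$-split solvable, and it is normal in $H:=\overline{\overline{\rho_\cV(G)}}$ as the Zariski closure of a normal subgroup. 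The quotient $H/N$ contains the image of $G/R(G)$ as a Zariski-dense subgroup; since any finite-dimensional representation of a connected semisimple Lie group has Zariski-closed image, this image is already all of $H/N$, and a quotient of a semisimple Lie group without compact factors still has no compact factors. Consequently $R(H)=N$ is $\RR$-split and $H/R(H)$ is semisimple without compact factors, so $H$ is discompact.

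For (2): assume the $G$-action is faithful and $H$ is $\rho_\cV$-discompact. Faithfulness of the connected $G$-action makes $\iota:\Lg\to\cG$ a Lie algebra isomorphism; the restriction $\rho_\cV|_\cG$ is then identified with $\Ad$, so $\ker\rho_\cV\subset\ker\Ad$ equals the center of $G$. Discompactness forces $\rho_\cV(R(G))\subset R(H)$, so $R(G)$ acts on $\cV$ with all real eigenvalues; specializing to $\cG\cong\Lg$ gives the split-solvability of $R(G)$, and Lie's theorem applied to $\cV/\cG$ produces an $R(G)$-invariant flag with one-dimensional quotients, yielding the required filtration. Finally, suppose $S_c$ were a compact simple factor of $G/R(G)$ and let $\tilde S_c\subset G$ be a connected Lie subgroup covering it. The image of $G/R(G)$ in $H/R(H)$ is both Zariski-closed (semisimple image) and Zariski-dense, hence equal to $H/R(H)$, so the image of $S_c$ is a connected compact normal subgroup of $H/R(H)$ — necessarily trivial since $H/R(H)$ has no compact factors. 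Thus $\rho_\cV(\tilde S_c)\subset R(H)$ is semisimple inside a solvable group, hence trivial, which forces $\tilde S_c\subset\ker\rho_\cV$ to lie in the center of $G$, contradicting that $\tilde S_c$ covers a simple Lie group.

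The main obstacle is the converse's compact-factor step: translating the algebraic fact that the image of $S_c$ vanishes in $H/R(H)$ into a Lie-theoretic contradiction inside $G$ requires the simultaneous use of faithfulness, the identification $\cG\cong\Lg$, and the Zariski-closedness of finite-dimensional representations of semisimple Lie groups.
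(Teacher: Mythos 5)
Your approach is genuinely different from the paper's. The paper decomposes $\rho_\cV$ into the subrepresentation $\rho_\cG$ on $\cG$ and the quotient representation $\rho_{\cV/\cG}$ on $\cV/\cG$, invokes Lemma \ref{L:subrep} (discompactness of $\rho_\cV$ $\Longleftrightarrow$ discompactness of both pieces, for connected $G$), identifies $\rho_\cG$ with a quotient of $\Ad$, and then checks each piece against the eigenvalue/compact-factor criterion of Proposition \ref{P:rho-discompact} and Corollary \ref{C:Ad-discompact}. You instead analyze the Zariski closure $H=\overline{\overline{\rho_\cV(G)}}$ directly, identifying its radical with $\overline{\overline{\rho_\cV(R(G))}}$ and verifying the criterion of Corollary \ref{C:discompact} by hand. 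The paper's route is more modular and sidesteps the bookkeeping you have to do about which groups are Zariski connected versus Hausdorff connected.

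There is, however, a genuine error: the assertion that ``any finite-dimensional representation of a connected semisimple Lie group has Zariski-closed image'' is false. The adjoint representation of $\mathrm{SL}(2,\RR)$ has image $\mathrm{PSL}(2,\RR)\cong\SO(2,1)_0$, whose Zariski closure is $\SO(2,1)$, a group with two Hausdorff components; so the image is Zariski dense but not Zariski closed. What is true is only that the image is the Hausdorff identity component of, and has finite index in, its Zariski closure. You invoke the false claim twice: in (1), to get ``this image is already all of $H/N$, \ldots consequently $H/R(H)$ is semisimple without compact factors,'' and in (2), to conclude that the image of $S_c$ is a normal subgroup of $H/R(H)$. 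As written these steps do not follow. The gap is likely repairable --- Corollary \ref{C:discompact} only requires $H_0/R(H)$ to have no compact factors, and $H_0/R(H)$ does coincide with the image of $G/R(G)$ once one tracks the identity components carefully --- but you would also need to reconcile $N=\overline{\overline{\rho_\cV(R(G))}}$ (Zariski connected, possibly Hausdorff disconnected) with the Lie-theoretic radical $R(H)$, which in the paper's conventions is the Hausdorff-connected group $(R(\bH)_\RR)_0$. None of these issues arise in the paper's decomposition-based argument, which is why it is the cleaner route.
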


Consider the natural representation of $\Gamma=\pi_1(M)$ in
$\Kill(\widetilde{M})$ induced by the deck transformations. Let
$\cW$ be a $\Gamma$-invariant subspace of $\Kill(\widetilde{M})$.
For a subgroupoid $\sA$ of $\Iso^\germ(M)$ and a subalgebra $\La_x$
of $\Kill^{\germ}_x(M)$, we will define the Zariski hulls of
$\Gamma$ in $\sA$ and $\La_x$ relative to $\cW$, denoted by
$\Hull^\cW_{\Gamma}(\sA)$ and $\Hull^\cW_{\Gamma}(\La_x)$
respectively, which encode some information of $\Gamma$. Here we
only note that $\Hull^\cW_{\Gamma}(\sA)$ is a subgroupoid of $\sA$,
$\Hull^\cW_{\Gamma}(\La_x)$ is a subalgebra of $\La_x$, and if
$\cW=0$, then $\Hull^\cW_{\Gamma}(\sA)=\sA$ and
$\Hull^\cW_{\Gamma}(\La_x)=\La_x$.

Our general theorem is as follows.

\begin{theorem}\label{T:main}
Let $M$ and $G$ be as above, $\cV$ be a $G$-invariant subspace of
$\Kill(M)$, $\cW$ be a $\Gamma$-invariant subspace of
$\Kill(\widetilde{M})^\cG$, and $\mu$ be a finite Borel measure on
$M$. Suppose that $G$ is $\rho_\cV$-discompact and
$\ker(\rho_\cV)\cdot G_\mu=G$. Then there exists a $G$-saturated
constructible set $M'\subset M_\reg$ with $\mu(M_\reg\backslash
M')=0$ such that for every $x\in M'$, we have
\begin{itemize}
\item[(1)] the
$\Hull_\Gamma^\cW(\Iso^\germ(M)^\cV)$-orbit of $x$ contains a
$G$-invariant open dense subset of $\overline{Gx}$, and
\item[(2)] $T_xGx\subset\ev_x(\Hull_\Gamma^\cW(\Kill^\germ_x(M)^\cV))$.
\end{itemize}
\end{theorem}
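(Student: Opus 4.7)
The plan is to reduce the theorem, via the algebraic encoding of germs of local isometries afforded by rigidity, to the Borel density theorem for algebraic quotients that the abstract advertises as the main technical tool.

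First, I would algebraically model the centralizers. Fix $x \in M_\reg$ and choose a jet order $k$ so large that, by Gromov's extension property, every order-$k$ infinitesimal isometry based at $x$ prolongs to an actual local $C^\varepsilon$ isometry. The $k$-jet space $J^k_x$ is then an algebraic variety (a fiber of a bundle over $M$), and the condition of centralizing $\cV$ is polynomial in the jet --- it involves only the $k$-jets of finitely many Killing fields spanning a $G$-invariant finite-dimensional subspace of $\cV$. Hence $\Iso^\germ(M)^\cV$ and $\Kill^\germ_x(M)^\cV$ correspond to algebraic subobjects of the jet picture, and their $\Gamma$-Zariski hulls with respect to $\cW$ can be read off there.

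Next, I would construct a $G$-equivariant map $\Phi$ from a measurable subset of $M_\reg$ to an algebraic $\rho_\cV(G)$-variety $X$ assembled from these jet data (think of $X$ as the orbit of the jet of $\cV$ inside a Grassmannian of the local Killing Lie algebra, with $G$ acting through $\rho_\cV$). The hypothesis $\ker(\rho_\cV)\cdot G_\mu = G$ gives $\rho_\cV(G) = \rho_\cV(G_\mu)$, so $\Phi_*\mu$ descends to a finite $\rho_\cV(G)$-invariant measure on the algebraic quotient $X/\!\!/\rho_\cV(G)$. Since $G$ is $\rho_\cV$-discompact, the Borel density theorem for algebraic quotients forces this pushforward to be supported on the locus where the image is $\overline{\overline{\rho_\cV(G)}}$-stationary. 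Unwinding, this says that for $\mu$-a.e.\ $x$ the algebraic hull of $\Iso^\germ(M)^\cV$ at $x$ already absorbs the infinitesimal $G$-motion, yielding $T_xGx \subset \ev_x(\Kill^\germ_x(M)^\cV)$ after algebraic closure (which is (2)); rigidity then integrates this to the germ-level statement (1) that the corresponding orbit of $\Iso^\germ(M)^\cV$ is open dense in $\overline{Gx}$. The good locus $M'$ is carved out by these algebraic conditions, which makes it constructible, and is $G$-saturated because they depend only on $\overline{Gx}$.

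Finally, to replace the bare algebraic hull with the prescribed $\Hull^\cW_\Gamma$, I would use that $\cW \subset \Kill(\widetilde M)^\cG$ commutes with the induced $\widetilde G$-action on $\widetilde M$; this compatibility makes the Borel density output $\Gamma$-equivariant, and the relevant hull coincides with $\Hull^\cW_\Gamma$ by the very definition of the latter as a $\Gamma$-Zariski closure relative to $\cW$. The main obstacle will be precisely this identification: matching the hull produced downstairs by Borel density with the one defined on germs via the deck-transformation action upstairs on $\widetilde M$, while simultaneously lifting from jets to germs and preserving the $\mu$-a.e.\ conclusion. Applying Borel density in the algebraic-quotient form required here is also subtle but is presumably a preparatory result established earlier in the paper.
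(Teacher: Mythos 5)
Your broad strategy is the right one and matches the paper: encode jets of Killing fields and of local isometries into a continuous Gauss map valued in an algebraic quotient, push forward $\mu$, and invoke the Borel density theorem for algebraic quotients (Theorem \ref{T:Borel} via Corollary \ref{C:Borel}). You also correctly identify why the hypotheses matter: $\ker(\rho_\cV)\cdot G_\mu=G$ makes the pushforward of $\mu$ genuinely $G$-invariant, and $\cW\subset\Kill(\widetilde M)^\cG$ is what makes the $\cW$-piece of the Gauss map $G$-invariant (Lemma \ref{L:invariance}). But there is a genuine gap in how you relate (1) and (2), and a misstatement of the relevant algebraic quotient.

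You claim the Borel-density output first yields (2), the tangent inclusion, and that ``rigidity then integrates'' this to the germ-level statement (1). This is backward, and the backward direction does not work. The paper's Borel-density output is topological: $\mu$-a.e.\ point has $\theta(x)$ fixed by $G$, so the connected component $C_x$ of $\theta^{-1}(\theta(x))\cap M_\reg$ through $x$ is $G$-invariant and locally closed. Statement (1) is then produced from this plus $k$-regularity: for $y,z\in C_x$ with $z$ in a $k$-regularity neighborhood of $y$, equality of Gauss maps supplies $\alpha\in\GL^k(n)$ matching the $k$-jets, $k$-regularity promotes the resulting $k$-jet isometry to an actual germ $\varphi\in\Iso^\germ_{y,z}(M)$, and the $\tau_{\cV,k,i}$- and $\upsilon_{\cW,k,i}$-components force $\varphi$ to centralize $\cV$ and lie in $\Hull_\Gamma^\cW$. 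Only \emph{afterwards} is (2) derived from (1): the containment $Gx\subset\Hull_\Gamma^\cW(\Iso^\germ(M)^\cV)$-orbit of $x$ yields $\rho_\cV(G)\subset\rho(\Hull_\Gamma^\cW(\sA_x))$ for a suitable representation $\rho$ of the isotropy group $\sA_x\subset\Iso^\germ_x(M)$, and one then differentiates using Lemma \ref{L:L-hull}. There is no available argument in the other direction: knowing only the tangent inclusion at $x$ does not hand you germs of local isometries matching the orbit, much less the $G$-invariant open dense subset of $\overline{Gx}$ that (1) requires. You also write $\Kill_x^\germ(M)^\cV$ rather than $\Hull_\Gamma^\cW(\Kill_x^\germ(M)^\cV)$ in this step, which is a strictly weaker conclusion.

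Second, the algebraic quotient is not $X/\!\!/\rho_\cV(G)$. Quotienting by $\rho_\cV(G)$ would destroy exactly the $G$-action that the Borel density theorem is applied to. The paper quotients by $\GL^k(n)\times\overline{\overline{\rho_\cW(\Gamma)}}$: the frame group because the Gauss map forgets the choice of $k$-frame, and $\overline{\overline{\rho_\cW(\Gamma)}}$ because the structure $\tau_{\cW,k,i}$ lives on $F^k(\widetilde M)$ and only descends to $F^k(M)$ after passing to $L(\cW,\Vect^i_0(\RR^n))\big/\overline{\overline{\rho_\cW(\Gamma)}}$. It is precisely this last quotient that manufactures the relation $\Hull_\Gamma^\cW$ when one unwinds the fixed-point condition, since the matching of jets then only holds up to some $A\in\overline{\overline{\rho_\cW(\Gamma)}}$. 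You correctly flag the hull identification as the main obstacle, but the resolution is this choice of quotient rather than an abstract ``$\Gamma$-equivariance of the Borel density output.''
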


\subsection{The centralizer and representation theorems}

The $\cW=0$ case of Theorem \ref{T:main}(1) can be viewed as a
``global" version of Gromov's centralizer theorem, which asserts
that the $\Iso^\germ(M)^\cV$-orbit of $x$ contains not only $Gx$ but
also an open dense subset of $\overline{Gx}$. Thus it generalizes Gromov's open dense orbit theorem (see Theorem \ref{T:opendense} below).
Theorem \ref{T:main}(2) is the
``infinitesimal" version of Theorem \ref{T:main}(1) and will be derived from it. By taking $\cW=0$ in Theorem \ref{T:main}(2), one easily obtain the
following Theorem \ref{T:cen-rep}(1), which generalizes Gromov's
centralizer theorem. But Theorem \ref{T:main}(2) indeed implies that
for any $\Gamma$-invariant subspace $\cW$ of
$\Kill(\widetilde{M})^\cG$, the first assertion of Theorem
\ref{T:cen-rep}(1) remains true if we replace $\Kill^\germ_x(M)^\cV$
by its subalgebra $\Hull_\Gamma^\cW(\Kill^\germ_x(M)^\cV)$. By
taking $\cW=\Kill(\widetilde{M})^\cV$, we obtain Theorem
\ref{T:cen-rep}(2) in a very direct way, which generalizes Gromov's
representation theorem.

\begin{theorem}\label{T:cen-rep}
Let $M$ be a connected $C^\varepsilon$ manifold with a rigid
$C^\varepsilon$ $\A$-structure and a finite Borel measure $\mu$, $G$
be a connected Lie group which acts on $M$ by $C^\varepsilon$
isometries and preserves $\mu$, and $\cV\supset\cG$ be a
$G$-invariant subspace of $\Kill(M)$ such that $G$ is
$\rho_\cV$-discompact. Then there exists a $G$-saturated
constructible set $M'\subset M_\reg$ with $\mu(M_\reg\backslash
M')=0$ such that for every $x\in M'$, the following assertions hold.
\begin{itemize}
\item[(1)] $T_xGx\subset\ev_x(\Kill^\germ_x(M)^\cV)$ and
$\Lg(x)\lhd\Lg$. If moreover $\varepsilon=\omega$, then
$T_{\tilde{x}}\widetilde{G}\tilde{x}\subset
\ev_{\tilde{x}}(\Kill(\widetilde{M})^\cV)$ for every
$\tilde{x}\in\pi^{-1}(x)$.
\item[(2)] If $\varepsilon=\omega$ and
$\rho:\pi_1(M)\to\GL(\Kill(\widetilde{M})^\cV)$ is the
representation induced by the deck transformations, then
$\ad(\Lg/\Lg(x))\prec\LL\left(\overline{\overline{\rho(\pi_1(M))}}\right)$.
\end{itemize}
\end{theorem}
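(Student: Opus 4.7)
The plan is to reduce both parts of the theorem to Theorem \ref{T:main}(2) applied with two strategic choices of $\cW$, supplemented by two small additional arguments. The hypotheses of Theorem \ref{T:main} are easy to verify: since $G$ preserves $\mu$, we have $G_\mu = G$, so the condition $\ker(\rho_\cV)\cdot G_\mu = G$ is automatic, and $\rho_\cV$-discompactness is assumed. This delivers a $G$-saturated constructible $M' \subset M_\reg$ with $\mu(M_\reg\setminus M')=0$ on which both conclusions of Theorem \ref{T:main} hold simultaneously; this is the $M'$ used throughout.

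For part (1), I would take $\cW = 0$ in Theorem \ref{T:main}(2); by the stated degeneration $\Hull^0_\Gamma(\La_x)=\La_x$ for any subalgebra, this immediately yields $T_xGx \subset \ev_x(\Kill^\germ_x(M)^\cV)$. To upgrade this to the normality $\Lg(x)\lhd\Lg$, take $X\in\Lg$ and $Y\in\Lg(x)$; the inclusion just obtained produces a germ $w\in\Kill^\germ_x(M)^\cV$ with $w(x)=\iota_x(X)$. Decomposing $\iota(X)=\hat{w}+u$ for a local representative $\hat w$, the remainder $u$ vanishes at $x$; since $\iota(Y)\in\cG\subset\cV$ is centralized by $w$ at the germ level and also vanishes at $x$, the pointwise bracket formula gives $[\iota(X),\iota(Y)](x)=0$, hence $[X,Y]\in\Lg(x)$. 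Lifting to $\widetilde{M}$ in the analytic case is then standard: germs of analytic Killing fields on the simply connected $\widetilde{M}$ extend uniquely to global elements of $\Kill(\widetilde{M})$, and the centralizer condition with respect to the image of $\cV$ spreads from a neighborhood of $\tilde{x}$ to all of $\widetilde{M}$ by analytic continuation, giving $T_{\tilde{x}}\widetilde{G}\tilde{x}\subset\ev_{\tilde{x}}(\Kill(\widetilde{M})^\cV)$.

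For part (2), I would take $\cW = \Kill(\widetilde{M})^\cV$ in Theorem \ref{T:main}(2). The Zariski hull $\Hull_\Gamma^\cW(\Kill^\germ_x(M)^\cV)$ is, by the construction announced before Theorem \ref{T:main}, precisely the device that encodes the Zariski closure of the $\Gamma$-action on $\cW$: its elements act on $\cW$ through $\LL(\overline{\overline{\rho(\pi_1(M))}})$. The inclusion $T_xGx \subset \ev_x(\Hull_\Gamma^\cW(\Kill^\germ_x(M)^\cV))$ therefore factors the infinitesimal orbit map $\Lg \to T_xGx$ through this hull. Restricting the hull's adjoint action to $\cG \subset \cW$ and using the normality $\Lg(x)\lhd\Lg$ from part (1), which makes $\ad(\Lg/\Lg(x))$ meaningful, one realizes $\ad(\Lg/\Lg(x))$ as the image of a subalgebra modulo an ideal, i.e.\ as a subquotient of $\LL(\overline{\overline{\rho(\pi_1(M))}})$.

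The main obstacle I anticipate is the final step of part (2): translating the geometric inclusion supplied by Theorem \ref{T:main}(2) into the algebraic statement $\ad(\Lg/\Lg(x))\prec\LL(\overline{\overline{\rho(\pi_1(M))}})$. This hinges on the precise definition of $\Hull_\Gamma^\cW$, which is only gestured at in the excerpt; one must verify that the hull's action on $\cW$ lands in $\LL(\overline{\overline{\rho(\pi_1(M))}})$ and that the kernel of the induced action on $\cG$ collapses to the image of $\Lg(x)$. Everything else is either formal, a short bracket calculation, or routine analytic-extension machinery, with the principal conceptual content already absorbed by Theorem \ref{T:main}.
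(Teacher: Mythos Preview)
Your overall strategy coincides with the paper's: both parts are deduced from Theorem~\ref{T:main}(2), and the paper in fact makes a \emph{single} application with $\cW=\Kill(\widetilde{M})^\cV$, which already implies your $\cW=0$ case since $\Hull_\Gamma^\cW(\La_x)\subset\La_x$. Your separate $\cW=0$ step is harmless but redundant. Your argument for $\Lg(x)\lhd\Lg$ via the bracket of two vector fields vanishing at $x$ is correct and slightly more hands-on than the paper's route, which instead shows that $\ev_x$ endows $T_xGx$ with a Lie algebra structure making $\iota_x:\Lg\to T_xGx$ a homomorphism (via Lemma~\ref{L:subquotient3}), whence $\Lg(x)=\ker\iota_x$ is an ideal. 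The analytic lift in part~(1) is as you say.

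The genuine gap is in part~(2), and it is not quite the obstacle you describe. First, your claim ``$\cG\subset\cW$'' is false in general: $\cW=\Kill(\widetilde{M})^\cV$ centralizes (the lift of) $\cV\supset\cG$, but there is no reason $\widetilde{\cG}$ should centralize $\cV$. So restricting the hull's action on $\cW$ to $\cG$ is not available. Second, and more importantly, you are missing the mechanism that produces the subquotient. Writing $\La_x=\Kill^\germ_x(M)^\cV$ and $\Lh_x=\Hull_\Gamma^\cW(\La_x)$, the paper applies Lemma~\ref{L:subquotient3} to $\Lf_x=\cG_x+\Lh_x$ (with $[\cG_x,\Lh_x]=0$) and the evaluation map $\ev_x$; the point is that $T_xGx\subset\ev_x(\Lh_x)$ forces $\Lf_x=\Lh_x+\ker(\ev_x|_{\Lf_x})$, and the lemma then manufactures a subalgebra $\Lz_x<\Lh_x$ with $-\ev_x|_{\Lz_x}:\Lz_x\twoheadrightarrow T_xGx$ a Lie algebra homomorphism. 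This yields $\Lg/\Lg(x)\cong T_xGx\prec\Lh_x$. One then passes to $\ad$ via Lemma~\ref{L:subquotient2}, and finally uses that in the analytic case $\lambda_{\tilde{x}}(\cW)=\La_x$, so $\La_x^\cW=Z(\La_x)$ and Lemma~\ref{L:exact} gives $\Lh_x/Z(\La_x)\prec\LL\bigl(\overline{\overline{\rho(\Gamma)}}\bigr)$. Chaining these subquotients gives the conclusion. The step you did not anticipate is the construction of $\Lz_x$ inside $\Lh_x$ mirroring $\cG_x$ under $\ev_x$; without it there is no bridge from $\Lg/\Lg(x)$ to $\Lh_x$.
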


\begin{remark}\label{R:cen-rep}
\begin{itemize}
\item[(1)] The case of Theorem \ref{T:cen-rep} where $G$ is semisimple without compact factors
strengthens Gromov's centralizer and representation theorems.
Indeed, Lemma \ref{L:discompact} implies that $G$ is
$\rho_\cV$-discompact for $\cV=\Kill(M)$. Since
$\Kill(\widetilde{M})^{\Kill(M)}\subset\Kill(\widetilde{M})^\cG$,
the last assertion of Theorem \ref{T:cen-rep}(1) strengthens
Gromov's centralizer theorem. On the other hand, as is well-known,
if $\mu$ is smooth and the $G$-action is faithful, then the
$G$-action is a.e. locally free (see Lemma \ref{L:locallyfree}). In
particular, there exists $x\in M'$ such that $\Lg(x)=0$. Hence
Theorem \ref{T:cen-rep}(2) implies that
$\overline{\overline{\rho(\pi_1(M))}}$ has a Lie subgroup locally
isomorphic to $G$, where the representation space of $\rho$ is
$\Kill(\widetilde{M})^{\Kill(M)}$.
\item[(2)] The conditions of Theorem \ref{T:cen-rep} can be also
satisfied for $\cV=\Kill(M)$ and non-semisimple $G$. Indeed, if the
geometric structure is unimodular (e.g., a pseudo-Riemannian
structure or a linear connection plus a volume density), then there
exists a connected closed normal subgroup $G$ of $\Iso(M)$ such that
$\Iso(M)/G$ is locally isomorphic to a compact Lie group, and such
that the conditions of Theorem \ref{T:cen-rep} are satisfied for
$\cV=\Kill(M)$ (see Remark \ref{R:cocompact}).
\item[(3)] If $R(G)$ is split solvable and $G/R(G)$ has no
compact factors, then by Lemma \ref{L:discompact}, $G$ is
$\rho_\cG$-discompact. In this general case, the $G$-action may be
not a.e. locally free. But Theorem \ref{T:cen-rep}(2) relates the
size of $\overline{\overline{\rho(\pi_1(M))}}$ to the sizes of the
stabilizers. This is sufficient for our applications (Theorems
\ref{T:large} and \ref{T:fixedpoint2} below).
\item[(4)] If $G$ is split solvable, then it is amenable and $\rho_\cG$-discompact. Thus we
can deduce from Theorem \ref{T:cen-rep} the centralizer and
representation theorems for split solvable $G$ which dispenses with
the finite invariant measure (see Corollary \ref{C:cen-rep2}).
\item[(5)] We do not need $\mu$ to be smooth (or ``Zariski" in the sense
of \cite{CQ03}). But if $\mu$ is smooth, then $M'$ contains a
$G$-invariant open dense subset of $M$, due to the fact that a
constructible subset of a topological space contains an open dense
subset of its closure. (This is well-known for Noetherian spaces. We
will prove it for general topological spaces in Lemma
\ref{L:constructible}.)
\item[(6)] The proofs of Gromov's representation theorem and Theorem
\ref{T:cen-rep}(2) are based on the fact that if
$\varepsilon=\omega$, then every local Killing field on
$\widetilde{M}$ can be extended to a global Killing field. By using
Theorem \ref{T:main}(2), we can prove that if $\varepsilon=\infty$ and Gromov's representation
theorem does not hold, then the local Killing fields on
$\widetilde{M}$ must be highly non-extendable (see Theorem \ref{T:rep-smooth} below).
\end{itemize}
\end{remark}

\subsection{Applications of Theorem \ref{T:cen-rep}}

Our first application of Theorem \ref{T:cen-rep} is a structural
property of $\Iso(M)$ for a simply connected compact analytic $M$
with a rigid unimodular $\A$-structure. A theorem of D'Ambra
\cite{DA} asserts that if $M$ is a simply connected compact analytic
Lorentz manifold then $\Iso(M)$ is compact. Although D'Ambra's
theorem no longer holds for general pseudo-Riemannian structures
(see \cite[Sect. 5]{DA} for a counterexample), Gromov \cite[Thm.
3.7.A]{Gr} proved the following result, which we call it Gromov's
compactness theorem.
\begin{itemize}
\item \emph{Gromov's compactness theorem}:
Let $M$ be a connected and simply connected compact analytic
manifold with a rigid unimodular analytic $\A$-structure. Then
$\Iso(M)$ is a compact extension of a connected abelian group.
\end{itemize}
We will prove the following result, which imposes another
restriction on the possible choices of $\Iso(M)$ for $M$ as in
Gromov's compactness theorem (e.g., the special Euclidean group
$\SO(n)\ltimes\RR^n$ can not serve as $\Iso(M)_0$).

\begin{theorem}\label{T:Gromov}
Let $M$ be a connected and simply connected compact analytic
manifold with a rigid unimodular analytic $\A$-structure. If
$\Iso(M)_0$ is nontrivial, then it either is compact semisimple or has a
non-discrete center.
\end{theorem}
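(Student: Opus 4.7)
The plan is to split on the center. Assume $\Iso(M)_0\neq 1$; if $Z(\Iso(M)_0)$ is non-discrete, the theorem is immediate, so suppose it is discrete and show $\Iso(M)_0$ is compact semisimple. Write $\Lg=\LL(\Iso(M))$, which coincides with $\Kill(M)$ because $M$ is compact analytic with a rigid $\A$-structure. Discreteness of $Z(\Iso(M)_0)$ is equivalent to $Z(\Lg)=0$.

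By Remark \ref{R:cen-rep}(2) (which uses unimodularity of $\sigma$), there exists a connected closed normal subgroup $H\lhd\Iso(M)$ such that $\Iso(M)/H$ is locally isomorphic to a compact Lie group and the hypotheses of Theorem \ref{T:cen-rep} are satisfied for the $H$-action on $M$ with $\cV=\Kill(M)$ and $\mu$ the finite smooth volume induced by $\sigma$. Theorem \ref{T:cen-rep}(1), applied in the analytic case, furnishes an $H$-saturated conull constructible $M'\subset M$ — which, by smoothness of $\mu$ and Remark \ref{R:cen-rep}(5), contains an open dense subset of $M$ — such that for every $x\in M'$ and every $\tilde x\in\pi^{-1}(x)$,
\[T_{\tilde x}\widetilde{H}\tilde x\subset\ev_{\tilde x}\bigl(\Kill(\widetilde{M})^{\Kill(M)}\bigr).\]
Since $M$ is simply connected, $\widetilde{M}=M$ and $\Kill(\widetilde{M})^{\Kill(M)}=Z(\Kill(M))=Z(\Lg)=0$. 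Hence $T_xHx=0$ on an open dense set; since $Hx\cong H/H(x)$ is homogeneous, each such orbit reduces to a point. Analyticity of each element of $H$ on the connected manifold $M$ upgrades this to $h|_M=\id_M$ for every $h\in H$, and because $\Iso(M)$ acts faithfully on $M$, $H=\{1\}$.

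It follows that $\Iso(M)\cong\Iso(M)/H$ has a compact Lie algebra. Combined with $Z(\Lg)=0$, the Lie algebra $\Lg$ is semisimple of compact type, so its simply connected integration is a compact semisimple Lie group; as $Z(\Iso(M)_0)$ is discrete and $\Iso(M)_0$ is connected, it is the quotient of this compact group by a discrete (hence finite) central subgroup, and is therefore compact semisimple. The main obstacle I foresee is ensuring that the hypotheses of Theorem \ref{T:cen-rep} genuinely apply for $\cV=\Kill(M)$ with no prior structural assumption on $\Iso(M)_0$; this is exactly the content of Remark \ref{R:cen-rep}(2), which will be invoked directly.
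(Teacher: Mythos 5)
Your proof is correct and follows the same route as the paper: you take the discompact radical $H$ of $\Iso(M)$ with respect to its action on $\Kill(M)$ (the paper's $\Ad$-discompact radical $G$), apply the centralizer theorem with $\cV=\Kill(M)$ on the simply connected $M$ to see that $H$-orbits are pointwise fixed on an open dense set, and conclude $H$ is trivial, whence $\Lg$ is compact with zero center and $\Iso(M)_0$ is compact semisimple. The only cosmetic difference is that you invoke Theorem \ref{T:cen-rep}(1) and Remark \ref{R:cen-rep}(2) directly rather than the packaged Corollary \ref{C:cen-rep1}(1), which is what the paper's proof uses.
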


\begin{remark}\label{R:Gromov}
If the geometric structure is a pseudo-Riemannian structure or a
linear connection plus a volume density, the compactness condition
for $M$ in Theorem \ref{T:Gromov} can be replaced by the condition
that $M$ is geodesically complete and has finite volume (see Remark
\ref{R:complete}).
\end{remark}

The second application concerns the relationship between the
structure of $G$ and the fundamental group of $M$. Let $M$ be a
connected analytic manifold with a rigid unimodular analytic
$\A$-structure of finite volume, and let a connected Lie group $G$
act faithfully, analytically, and isometrically on $M$. It is
commonly believed that if $G$ is ``large", then $\pi_1(M)$ must also
be ``large". A typical example is Gromov's representation theorem,
which implies that if $G$ is simple, then $\pi_1(M)$ is
non-amenable. For more results in this direction, see
\cite{Fi,Zi93,ZM} and the references therein. By using Theorem
\ref{T:cen-rep}, we will prove the following theorem, which provides
three results in this spirit for split solvable $G$.

\begin{theorem}\label{T:large}
Let $M$ be a connected analytic manifold with a rigid unimodular
analytic $\A$-structure of finite volume, and let a connected split
solvable Lie group $G$ act faithfully, analytically, and
isometrically on $M$.
\begin{itemize}
\item[(1)] If $G$ is non-abelian, then $\pi_1(M)$ is infinite.
\item[(2)] If $G$ is not at most $2$-step nilpotent, then $\pi_1(M)$ is not virtually abelian.
\item[(3)] If $G$ is non-nilpotent, then $\pi_1(M)$ is not virtually nilpotent.
\end{itemize}
\end{theorem}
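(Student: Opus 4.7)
The plan is to apply Theorem~\ref{T:cen-rep} with $\cV=\cG$, using the canonical volume density as the $G$-invariant finite Borel measure $\mu$ — this is available because the $\A$-structure is unimodular and $M$ has finite volume. Since $G$ is split solvable we have $R(G)=G$ and $G/R(G)=\{e\}$, so Lemma~\ref{L:discompact}(1) gives that $G$ is $\rho_\cG$-discompact. Theorem~\ref{T:cen-rep} then produces a $G$-saturated constructible set $M'\subset M_\reg$, of full $\mu$-measure in $M_\reg$, such that every $x\in M'$ satisfies $\Lg(x)\lhd\Lg$ and
\[
\ad(\Lg/\Lg(x))\prec\LL\!\left(\overline{\overline{\rho(\pi_1(M))}}\right),
\]
where $\rho:\pi_1(M)\to\GL(\Kill(\widetilde{M})^\cG)$ is induced by the deck transformations. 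Since $\mu$ is smooth, Remark~\ref{R:cen-rep}(5) guarantees that $M'$ contains a nonempty open dense subset of $M$.

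Next I translate each hypothesis on $\pi_1(M)$ into a constraint on $\LL(\overline{\overline{\rho(\pi_1(M))}})$. If $\pi_1(M)$ is finite, this Lie algebra is $0$. If $\pi_1(M)$ is virtually abelian, pick a finite-index abelian subgroup $\Lambda\subset\pi_1(M)$; then $\overline{\overline{\rho(\Lambda)}}$ is abelian (commuting with a fixed matrix is Zariski-closed), has finite index in $\overline{\overline{\rho(\pi_1(M))}}$, and therefore contains its identity component — so $\LL(\overline{\overline{\rho(\pi_1(M))}})$ is abelian. The same argument, using that $k$-step nilpotency is a Zariski-closed condition on $(k{+}1)$-tuples, shows that if $\pi_1(M)$ is virtually nilpotent then $\LL(\overline{\overline{\rho(\pi_1(M))}})$ is nilpotent. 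Taking subquotients, $\ad(\Lg/\Lg(x))$ is respectively zero, abelian, or nilpotent; equivalently, $\Lg/\Lg(x)$ is respectively abelian, at most $2$-step nilpotent, or nilpotent of class at most $\dim\Lg$, for every $x\in M'$.

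In each of the three cases there is an ideal $\Lh\lhd\Lg$ — namely $[\Lg,\Lg]$, $[\Lg,[\Lg,\Lg]]$, and the $(\dim\Lg)$-th term of the lower central series of $\Lg$ — that the case conclusion forces into $\Lg(x)$ for every $x\in M'$. Hence every Killing field in $\iota(\Lh)\subset\cG$ vanishes on the open dense analytic set $M'$, and since Killing fields are analytic on $M$ they vanish on all of $M$. Faithfulness of the $G$-action makes $\iota:\Lg\to\Kill(M)$ injective, so $\Lh=0$, contradicting in turn the non-abelianness of $G$, the fact that $G$ is not at most $2$-step nilpotent, and the non-nilpotency of $G$. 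The only step going beyond Theorem~\ref{T:cen-rep} is the elementary lemma that virtually nilpotent linear groups have Zariski closures whose identity components are nilpotent; after that, the argument is a short diagram chase in the lower central series combined with the standard analytic-rigidity principle for Killing fields, so I expect no serious obstacle.
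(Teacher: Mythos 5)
Your proposal is correct and follows essentially the same route as the paper: Lemma~\ref{L:discompact} for $\rho_\cG$-discompactness, Theorem~\ref{T:cen-rep} (via Corollary~\ref{C:cen-rep1}) plus Corollary~\ref{C:constructible} to get an open dense set, Lemma~\ref{L:virtually} for the constraints on $\LL(\overline{\overline{\rho(\Gamma)}})$, and the lower-central-series chase with faithfulness. The only quibble is an off-by-one in the nilpotent case (you want the $(\dim\Lg+1)$-th term of the lower central series, or equivalently its stable tail $\Lg_\infty$, which is what the paper uses), but this is harmless.
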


\begin{remark}\label{R:large}
\begin{itemize}
\item[(1)] The case of Theorem \ref{T:large}(1) where $M$ is compact
can be also deduced from Gromov's compactness theorem.
\item[(2)] Recall that a group is \emph{virtually abelian} (resp.
\emph{virtually nilpotent}) if it has an abelian (resp. nilpotent)
subgroup of finite index. By Gromov's polynomial growth theorem
\cite{Gr81}, a finitely generated group (e.g., $\pi_1(M)$ for a
compact $M$) is virtually nilpotent if and only if it has polynomial
growth.
\end{itemize}
\end{remark}

We also use Theorem \ref{T:cen-rep} to prove two fixed point
theorems for isometric actions of split solvable Lie groups. As is
well-known, solvable group actions tend to have fixed points. For
example, continuous affine actions of a solvable lcsc group (which
is amenable) on a compact convex subset of a locally convex
topological vector space have fixed points (see \cite{Zi84}). As
another example, the Borel fixed point theorem \cite{Bo91} asserts
that $\KK$-regular actions of a connected $\KK$-split solvable
$\KK$-group on a complete $\KK$-variety $\bV$ with
$\bV_\KK\ne\emptyset$ have fixed points in $\bV_\KK$. For isometric
actions of a split solvable Lie group, we will prove the following
result.

\begin{theorem}\label{T:fixedpoint1}
Let $M$ be a connected and simply connected compact analytic
manifold with a rigid non-unimodular analytic $\A$-structure, and
let $G$ be a connected split solvable Lie subgroup of $\Iso(M)$.
Suppose that
\begin{itemize}
\item[(1)] $G$ is discompact with respect to the restriction to $G$ of the adjoint
representation of $\Iso(M)$, and
\item[(2)] $\Iso(M)_0$ has a discrete center.
\end{itemize}
Then any $G$-minimal set in $M$ consists of a single point. In
particular, $G$ has a fixed point in $M$.
\end{theorem}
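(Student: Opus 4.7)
My plan is to combine existence of a $G$-minimal set with amenability of $G$ to produce a $G$-invariant probability measure, then apply Theorem \ref{T:cen-rep} with $\cV=\Kill(M)$; hypotheses (1) and (2) of the present theorem will respectively supply the discompactness needed and then force the relevant centralizer to vanish, collapsing orbits on the minimal set to single points.

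First, compactness of $M$ gives a $G$-minimal set $M_0\subset M$, and since $G$ is split solvable and therefore amenable, the $G$-action on $M_0$ admits a $G$-invariant Borel probability measure $\mu$, which I regard as a finite Borel measure on $M$ with $G_\mu=G$. I would then take $\cV=\Kill(M)$, which is $G$-invariant and contains $\cG$. Since $M$ is compact analytic, every Killing field on $M$ is complete, so $\Kill(M)=\LL(\Iso(M)_0)$ and the natural representation $g\mapsto dg$ of $\Iso(M)_0$ on $\Kill(M)$ coincides with its adjoint representation. Consequently $\rho_\cV|_G$ is the restriction to $G$ of the adjoint representation of $\Iso(M)$, and hypothesis (1) is literally the $\rho_\cV$-discompactness of $G$; the identity $G_\mu=G$ then trivially yields $\ker(\rho_\cV)\cdot G_\mu=G$.

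Theorem \ref{T:cen-rep}(1) now produces a $G$-saturated constructible set $M'\subset M_\reg=M$ with $\mu(M\backslash M')=0$ such that $T_{\tilde{x}}\widetilde{G}\tilde{x}\subset\ev_{\tilde{x}}(\Kill(\widetilde{M})^\cV)$ for every $x\in M'$ and $\tilde{x}\in\pi^{-1}(x)$. Simple-connectedness of $M$ identifies $\widetilde{M}$ with $M$ and makes the $\widetilde{G}$-action factor through $G$, so $T_{\tilde{x}}\widetilde{G}\tilde{x}=T_xGx$ and the image of $\Kill(M)$ in $\Kill(\widetilde{M})$ is the whole of $\Kill(M)$; hence $\Kill(\widetilde{M})^\cV$ is precisely the center of $\Kill(M)$. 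Hypothesis (2) says the center of $\Iso(M)_0$ is discrete, and since $\Iso(M)_0$ is connected, this forces the Lie algebra center of $\Kill(M)$ to be zero. Therefore $T_xGx=0$ for every $x\in M'$, so each such orbit is a single point.

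Finally, $\mu$ is supported in $M_0$ with $\mu(M\backslash M')=0$, so $M_0\cap M'\neq\emptyset$, and $G$-saturation of $M'$ then forces $M_0\subset M'$, whence every point of $M_0$ is $G$-fixed and minimality gives $|M_0|=1$. The ``in particular'' clause follows from the existence of a $G$-minimal set. The only substantive step beyond invoking Theorem \ref{T:cen-rep} is the identification of $\rho_{\Kill(M)}$ with the restricted adjoint representation, which is needed to align hypothesis (1) with the $\rho_\cV$-discompactness hypothesis; once this is in place, simple-connectedness and the discrete-center condition collapse the centralizer essentially formally. The non-unimodularity of $\sigma$ plays no role in the argument itself; it is the setting in which no canonical $\Iso(M)$-invariant finite measure exists, so $\mu$ must come from amenability of $G$ rather than from the geometric structure.
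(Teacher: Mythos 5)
Your proposal is correct and follows essentially the same route as the paper: the paper invokes Corollary \ref{C:cen-rep2}(1) (whose proof packages exactly your steps of amenability producing a $G$-invariant measure on the minimal set, applying Theorem \ref{T:cen-rep}, and using $G$-saturation via Lemma \ref{L:minimal-saturated}), then identifies $\rho_{\Kill(M)}$ with the adjoint representation of $\Iso(M)$ and uses simple connectedness and the discrete-center hypothesis to conclude $Z(\Kill(M))=0$. You simply inline the corollary's proof rather than cite it; the underlying argument is identical.
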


Under much weaker conditions, especially without any assumption on
$\Iso(M)_0$, we can prove that the commutator group $(G,G)$ always
has a fixed point.

\begin{theorem}\label{T:fixedpoint2}
Let $M$ be a connected compact analytic manifold with a rigid
non-unimodular analytic $\A$-structure, and let $G$ be a connected
split solvable Lie group which acts analytically and isometrically
on $M$. Suppose that either
\begin{itemize}
\item[(1)] $\pi_1(M)$ is finite, or
\item[(2)] $\pi_1(M)$ is virtually nilpotent and
$[\Lg,[\Lg,\Lg]]=[\Lg,\Lg]$.
\end{itemize}
Then $(G,G)$ acts trivially on any $G$-minimal set in $M$. In
particular, $(G,G)$ has a fixed point in $M$.
\end{theorem}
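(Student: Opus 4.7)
My plan is to apply Theorem \ref{T:cen-rep} with $\cV=\cG$. Since $G$ is a connected split solvable Lie group, it is amenable and, by Remark \ref{R:cen-rep}(4), $\rho_\cG$-discompact. I would fix any $G$-minimal set $M_0\subset M$ and, by amenability of $G$ acting on the compact space $M_0$, produce a $G$-invariant Borel probability measure $\mu$ supported on $M_0$. Since $\varepsilon=\omega$ and $M$ is compact we have $M_\reg=M$, so Theorem \ref{T:cen-rep} yields a $G$-saturated constructible set $M'\subset M$ with $\mu(M')=1$ satisfying $\Lg(x)\lhd\Lg$ and $\ad(\Lg/\Lg(x))\prec\LL\bigl(\overline{\overline{\rho(\pi_1(M))}}\bigr)$ for every $x\in M'$. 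Since $M'\cap M_0$ has full $\mu$-measure, it is non-empty, and since $M_0=\overline{Gx}$ for every $x\in M_0$, the $G$-saturation of $M'$ forces $M_0\subset M'$.

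The main task is then to upgrade this conclusion to $[\Lg,\Lg]\subset\Lg(x)$ for every $x\in M_0$. In case (1), $\rho(\pi_1(M))$ is finite, hence $\LL\bigl(\overline{\overline{\rho(\pi_1(M))}}\bigr)=0$ and $\ad(\Lg/\Lg(x))=0$, which makes $\Lg/\Lg(x)$ abelian. In case (2), I will invoke the standard fact that a virtually nilpotent subgroup of $\GL(\cV)$ has Zariski closure with nilpotent identity component: a finite-index nilpotent subgroup has nilpotent Zariski closure (since commutator maps are Zariski-continuous), and this closure must contain the identity component of the full Zariski closure. Hence $\LL\bigl(\overline{\overline{\rho(\pi_1(M))}}\bigr)$ is nilpotent, so $\ad(\Lg/\Lg(x))$, being a subquotient, is nilpotent; consequently $\bar\Lg:=\Lg/\Lg(x)$ is itself nilpotent. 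The hypothesis $[\Lg,[\Lg,\Lg]]=[\Lg,\Lg]$ descends to $[\bar\Lg,[\bar\Lg,\bar\Lg]]=[\bar\Lg,\bar\Lg]$, so the lower central series of $\bar\Lg$ stabilizes at its second term; combined with nilpotence this forces $[\bar\Lg,\bar\Lg]=0$. In either case, $[\Lg,\Lg]\subset\Lg(x)$.

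To conclude, set $H=\bigcap_{x\in M_0}G(x)$, a closed normal subgroup of $G$ that by construction acts trivially on $M_0$. Then $\LL(H)=\bigcap_{x\in M_0}\Lg(x)\supset[\Lg,\Lg]$. Since $G$ is connected, $(G,G)$ is precisely the analytic subgroup of $G$ with Lie algebra $[\Lg,\Lg]$, so $(G,G)\subset H_0\subset H$, giving the first assertion. The ``in particular'' clause then follows because $G$-minimal sets exist in the compact $G$-space $M$ by Zorn's lemma, so any point in one is a $(G,G)$-fixed point. The chief technical difficulty is the argument in case (2), combining the Zariski-closure fact for virtually nilpotent linear groups with the commutator hypothesis on $\Lg$ to squeeze $\bar\Lg$ down to abelian; once that is in place, the remaining steps are formal deductions from Theorem \ref{T:cen-rep} and elementary Lie theory.
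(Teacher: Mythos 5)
Your proposal is correct and follows essentially the same route as the paper: the paper packages your invariant-measure-from-amenability plus $G$-saturation step as Corollary \ref{C:cen-rep2}(2) (via Lemma \ref{L:minimal-saturated}), and your Zariski-closure argument for virtually nilpotent groups is exactly its Lemma \ref{L:virtually}; the final Lie-algebraic deductions in cases (1) and (2) and the passage from $[\Lg,\Lg]\subset\Lg(x)$ to $(G,G)\subset G(x)$ also match. The only differences are cosmetic (re-deriving those two lemmas inline, and phrasing the lower-central-series argument at the level of $\bar\Lg$ rather than $\Lg$).
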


\begin{remark}
\begin{itemize}
\item[(1)] Condition (1) in Theorem \ref{T:fixedpoint1} is satisfied if $H$ is
a connected noncompact semisimple Lie subgroup of $\Iso(M)$ with an
Iwasawa decomposition $H=KAN$ and $G=AN$ (see Remark
\ref{R:fixedpoint1}).
\item[(2)] Many important split solvable Lie groups $G$ satisfy the condition
$[\Lg,[\Lg,\Lg]]=[\Lg,\Lg]$. The simplest example is the ``$ax+b$"
group. Another typical class of examples is $G=AN$, where $KAN$ is
an Iwasawa decomposition of a noncompact semisimple Lie group (in
this case we have $(G,G)=N$). Note that if $G$ satisfies the
condition, then so does any quotient group of $G$.
\item[(3)] The non-unimodularity of the geometric structures in Theorems
\ref{T:fixedpoint1} and \ref{T:fixedpoint2} will not be used in the
proofs. But Theorems \ref{T:Gromov} and \ref{T:large} imply that
Theorems \ref{T:fixedpoint1} and \ref{T:fixedpoint2} are meaningful
only for non-unimodular structures.
\end{itemize}
\end{remark}

\subsection{The main idea in the proof and algebraic quotients}

As is well-known, the notion of algebraic hulls \cite{Zi84} is
widely used in the study of Gromov's centralizer theorem and related
topics. However, it seems to the author that the algebraic hull
method is not sufficient to prove Theorem \ref{T:main} in full
generality. Our proof of Theorem \ref{T:main} uses some idea in
Gromov's proof of his centralizer theorem outlined in \cite[Sec.
5.2]{Gr}. But there are essential improvements. We construct a
continuous geometric structure by augmenting $\sigma$ with the
geometric structures associated with the jets of Killing fields in
$\cV$ and $\cW$. The geometric structure associated with $\cW$ has
its range in an algebraic quotient endowed with the quotient
topology, and can be only expected to be continuous. But it has the
advantage that it relates directly to $\pi_1(M)$. Then we consider
the Gauss map of the augmented geometric structure, which also has
range in an algebraic quotient and is continuous. Although the
quotient topology on the algebraic quotient is in general not
Hausdorff, the continuity of the Gauss map enables us to investigate
interesting topological properties.

A crucial tool in the proof of Theorem \ref{T:main} is a Borel
density theorem for algebraic quotients. Let $\KK$ be a locally
compact non-discrete field, and let $\bG$ be a $\KK$-group.
Following \cite{Sh}, we say that $\bG$ is \emph{$\KK$-discompact} if
any $\KK$-homomorphism $\bG\to\bH$ of $\KK$-groups with $\bH_\KK$
compact is trivial. One version of the Borel density theorem is as
follows (see \cite{Sh}).
\begin{itemize}
\item \emph{Borel density
theorem}: If a $\KK$-discompact $\KK$-group $\bG$ acts
$\KK$-regularly on a $\KK$-variety $\bV$ with $\bV_\KK\ne\emptyset$
and $\bG_\KK$ preserves a finite Borel measure $\mu$ on $\bV_\KK$,
then $\mu$-a.e. point in $\bV_\KK$ is fixed by $\bG$.
\end{itemize}
Now we assume that $\mathrm{char}\,\KK=0$. Let $\bG$ and $\bH$ be
$\KK$-groups, and let $\bV$ be a $\KK$-variety with
$\bV_\KK\ne\emptyset$. Suppose that $\bG\times\bH$ acts
$\KK$-regularly on $\bV$. We endow the algebraic quotient
$\bV_\KK/\bH_\KK$ with the quotient Borel structure. Then $\bG_\KK$
acts measurably on $\bV_\KK/\bH_\KK$. We will prove and use the
following result.

\begin{theorem}\label{T:Borel}
Let $\bG$, $\bH$, and $\bV$ be as above such that $\bG$ is
$\KK$-discompact, and let $\mu$ be a finite Borel measure on
$\bV_\KK/\bH_\KK$ such that either
\begin{itemize}
\item[(1)] $\mu$ is $\bG_\KK$-invariant, or
\item[(2)] $\KK=\RR$ and $\mu$ is invariant under some Zariski dense Lie subgroup of $\bG_\RR$.
\end{itemize}
Then for $\mu$-a.e. $x\in\bV_\KK/\bH_\KK$, $\bG_\KK(x)$ is an open
subgroup of finite index in $\bG_\KK$.
\end{theorem}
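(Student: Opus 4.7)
The plan is to reduce Theorem~\ref{T:Borel} to the classical Borel density theorem recorded in the excerpt, via induction on $\dim\bV$, using Rosenlicht's theorem on generic geometric quotients to strip off the top stratum. The base case $\dim\bV=0$ is immediate: $\bV_\KK/\bH_\KK$ is finite, the $\bG$-action on it factors through a finite $\KK$-group, and $\KK$-discompactness forces this finite quotient to be trivial. For the inductive step, apply Rosenlicht's theorem to the commuting actions of $\bG$ and $\bH$ to produce a $\bG\times\bH$-invariant Zariski open dense $\KK$-subvariety $\bV_0\subset\bV$ admitting a $\KK$-rational geometric $\bH$-quotient $\pi:\bV_0\to\mathbf{W}$, with an induced $\KK$-regular $\bG$-action on $\mathbf{W}$ making $\pi$ equivariant; the $\bG$-equivariance follows from the uniqueness of the geometric quotient applied to the combined action of $\bG\times\bH$.

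Decompose $\mu=\mu_0+\mu_1$, where $\mu_0$ is the restriction of $\mu$ to the Borel subset $(\bV_0)_\KK/\bH_\KK$. Both pieces inherit $\bG_\KK$-invariance (resp.~invariance under the given Zariski dense Lie subgroup). The piece $\mu_1$ is concentrated on the algebraic quotient of the closed $\KK$-subvariety $\bV\setminus\bV_0$ of strictly smaller dimension, so the inductive hypothesis disposes of it. For $\mu_0$, consider the induced $\bG_\KK$-equivariant Borel map $\bar\pi:(\bV_0)_\KK/\bH_\KK\to\mathbf{W}_\KK$ and push $\mu_0$ forward to a finite Borel measure $\nu$ on $\mathbf{W}_\KK$, invariant under $\bG_\KK$ (case (1)) or under the Zariski dense Lie subgroup (case (2)). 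Invoking the classical Borel density theorem recalled above in case (1), or its Furstenberg-style extension to Zariski dense subgroups of a $\KK$-discompact $\bG$ in case (2), yields that $\nu$ is concentrated on $\mathbf{W}_\KK^\bG$. Hence for $\mu_0$-a.e.\ $x$, the orbit $\bG_\KK\cdot x$ is contained in the single fiber $\bar\pi^{-1}(\bar\pi(x))$.

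The crux is that each such fiber is finite: the scheme-theoretic fiber $\pi^{-1}(y)\subset\bV_0$ is a single $\bH$-orbit defined over $\KK$, and its $\KK$-points split into finitely many $\bH_\KK$-orbits because the pointed Galois cohomology $H^1(\KK,\bH_v)$ is finite for every locally compact field $\KK$ of characteristic $0$ (Borel--Serre for $\KK=\RR$, analogous Borel--Serre/Kneser finiteness in the non-archimedean case, trivial for $\KK=\CC$). Consequently $\bG_\KK(x)$ has finite index in $\bG_\KK$. Moreover, $\bH_\KK v\subset\bV_\KK$ is Borel as a single orbit of a continuous action, so $\{x\}$ is Borel in the quotient Borel structure; hence $\bG_\KK(x)$ is a Borel subgroup of $\bG_\KK$, and a Borel subgroup of finite index in a locally compact group has positive Haar measure and therefore contains a Steinhaus neighborhood of the identity, so it is open. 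The main anticipated obstacle is arranging the Rosenlicht quotient $\pi:\bV_0\to\mathbf{W}$ to be $\bG\times\bH$-equivariantly defined over $\KK$ in the precise form needed, together with verifying the finiteness of fibers of $\bar\pi$ via the finiteness of $H^1(\KK,\bH_v)$---this finiteness being the essential leverage that upgrades the conclusion from ``$\bar\pi(x)$ is $\bG$-fixed'' to ``$\bG_\KK(x)$ has finite index.''
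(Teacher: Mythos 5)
Your proposal takes a genuinely different route from the paper's proof, and the comparison is instructive. The paper's argument is ergodic-theoretic: it first applies ergodic decomposition (valid since $\bV_\KK/\bH_\KK$ is a standard Borel $G$-space), then uses tameness of the $\bG_\KK$-action to conclude that an ergodic component is supported on a single $\bG_\KK$-orbit $\mathcal{O}$; by the paper's Lemma \ref{L:quotient2}(2) this orbit is a finite cover of $\bG_\KK/\bK_\KK$ for an explicit $\KK$-subgroup $\bK$, and the measure pushes forward to $\bG_\KK/\bK_\KK$, where Theorem \ref{T:Shalom} forces $\bK=\bG$. Case (2) is absorbed at the last step because Dani's theorem upgrades invariance under a Zariski dense subgroup to full $\bG_\RR$-invariance \emph{on the homogeneous space} $\bG_\RR/\bK_\RR$. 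Your argument instead inducts on $\dim\bV$, strips a Rosenlicht geometric quotient off the top stratum, applies the pointwise Borel density theorem on the base $\mathbf{W}_\KK$, and then deduces finite index from the finiteness of fibers of $\bar\pi$, which you obtain from finiteness of $H^1(\KK,\bH_v)$. The Galois cohomology observation is a nice, more explicitly geometric substitute for the combination of tameness plus the structure of $\bG_\KK(x)$ used in the paper, and your Steinhaus step at the end is correct (a Borel subgroup of finite index is open).

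Two points would need to be tightened before the argument is complete. First, you correctly flag the need for the Rosenlicht quotient $\pi:\bV_0\to\mathbf{W}$ to be defined over $\KK$ with $\bV_0$ not merely $\bH$-invariant but $\bG\times\bH$-invariant and with a compatible induced $\bG$-action on $\mathbf{W}$; the version of Rosenlicht's theorem stated in the cited reference \cite{Ro} gives an $\bH$-invariant open set, and the saturation $\bG\cdot(\bV\setminus\bV_0)$ can a priori dominate $\bV$, so the equivariant refinement is not automatic from a na\"ive shrinking argument. It is a true statement (it follows, e.g., from the uniqueness and birational functoriality of the rational quotient together with the fact that $\bG$ commutes with $\bH$, or from the equivariant Rosenlicht theorem in the invariant-theory literature), but it needs a precise statement and proof or citation, since your induction collapses without it. Second, and more substantively, the ``Furstenberg-style extension'' you invoke in case (2) is not the same mechanism the paper uses: the paper reduces case (2) to case (1) via Dani's theorem \cite{Da}, which is a statement about invariant measures on \emph{homogeneous} spaces $\bG_\RR/\bK_\RR$; your intermediate space $\mathbf{W}_\KK$ is not homogeneous, so Dani's theorem does not apply directly. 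A Borel density theorem for Zariski dense subgroups acting on the $\RR$-points of a general $\RR$-variety does hold (it follows from the Furstenberg lemma via an embedding of $\mathbf{W}$ into a Grassmannian or projective space; see \cite{Sh} or \cite{Zi84}), but it is a stronger input than the ``excerpt'' version of the theorem and should be cited explicitly. Once those two citations are supplied, your argument goes through.
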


It is necessary to point out the following difference between
Theorem \ref{T:Borel} and the usual Borel density theorem, which
already occurs for $\KK=\RR$: Although the $\KK=\RR$ (and $\mu$ is
$\bG_\RR$-invariant) case of Theorem \ref{T:Borel} implies that the
set of $(\bG_\RR)_0$-fixed points in $\bV_\RR/\bH_\RR$ is
$\mu$-conull, $\bG_\RR$ (which is Zariski connected by the
discompactness) may have no fixed points in $\bV_\RR/\bH_\RR$ (see
Remark \ref{R:counterexample}).

\subsection{Organization of the paper}

In Section \ref{S:auxiliary} we prove some auxiliary results
concerning constructible sets, $G$-saturated sets, and subquotients
of Lie algebras. In Section \ref{S:review} we briefly review
Gromov's theory of rigid geometric structures. In Section
\ref{S:Zariski} we define Zariski hulls of fundamental groups and
prove some of their basic properties. In Section \ref{S:discompact}
we study discompact groups and prove Lemma \ref{L:discompact}. Then
in Section \ref{S:Borel} we prove Theorem \ref{T:Borel}. The proof
is based on the usual Borel density theorem. Theorem \ref{T:main} is
proved in Section \ref{S:proof}. We first prove Theorem
\ref{T:main}(1) by using Theorem \ref{T:Borel}, and then prove
Theorem \ref{T:main}(2) by passing from global to infinitesimal
information. In Section \ref{S:proof1} we provide the details of how Theorem
\ref{T:main} unifies Gromov's theorems, in particular, the proof of Theorem \ref{T:cen-rep}.
Theorems \ref{T:Gromov}--\ref{T:fixedpoint2} are proved in Section
\ref{S:proof2}.

\section{Auxiliary results}\label{S:auxiliary}

In this section we prove some basic properties of constructible
sets, $G$-saturated sets, and subquotients of Lie algebras. The
reader may skip this section during the first reading and return to
it later as reference.

\subsection{Constructible sets}

Let $X$ be a topological space. Recall that a subset of $X$ is
\emph{constructible} if it is a finite union of locally closed sets.
It is well-known that if $X$ is Noetherian, then every constructible
subset of $X$ contains an open dense subset of its closure
(\cite[AG.1.3]{Bo91}). We prove that this remains true for arbitrary
$X$.

\begin{lemma}\label{L:constructible}
Let $X$ be a topological space. If $Y\subset X$ is constructible,
then it contains an open dense subset of $\overline{Y}$.
\end{lemma}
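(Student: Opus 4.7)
My approach is induction on the number $n$ of locally closed pieces in a decomposition $Y=\bigcup_{i=1}^n(U_i\cap F_i)$, with $U_i$ open and $F_i$ closed in $X$. The conclusion is equivalent to the assertion that the interior of $Y$ relative to $\overline{Y}$ is dense in $\overline{Y}$; it will be convenient throughout to produce opens of the form $U\cap\overline{Y}$ with $U$ open in $X$.

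For the base case $n=1$, with $Y=U\cap F$, I observe that $\overline{Y}\subset F$, so $U\cap\overline{Y}\subset U\cap F=Y$. The set $U\cap\overline{Y}$ is open in $\overline{Y}$ and contains $Y$, which is dense in $\overline{Y}$; hence it is the desired open dense subset.

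For the inductive step I split $Y=Y_1\cup Y_2$, where $Y_1$ is locally closed and $Y_2$ is a union of $n-1$ locally closed pieces, and set $A=\overline{Y_1}$, $B=\overline{Y_2}$, so $\overline{Y}=A\cup B$. The base case and the inductive hypothesis supply opens $U_1,U_2\subset X$ such that $U_1\cap A$ is dense in $A$ and contained in $Y_1$, and $U_2\cap B$ is dense in $B$ and contained in $Y_2$. I then define
\[
W=\bigl(U_1\cap U_2\cap\overline{Y}\bigr)\cup\bigl(U_1\cap(\overline{Y}\setminus B)\bigr)\cup\bigl(U_2\cap(\overline{Y}\setminus A)\bigr),
\]
which is manifestly open in $\overline{Y}$. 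Using $\overline{Y}\setminus B\subset A$ and $\overline{Y}\setminus A\subset B$, one checks $W\subset Y$ piece by piece: a point of the first piece lies in $A\cap U_1\subset Y_1$ or in $B\cap U_2\subset Y_2$; the second piece is contained in $A\cap U_1\subset Y_1$; the third, in $B\cap U_2\subset Y_2$.

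The heart of the argument is density of $W$ in $\overline{Y}$, which I verify by a case analysis on a nonempty open $N\subset\overline{Y}$. If $N\not\subset A$, then $N\cap(\overline{Y}\setminus A)$ is a nonempty open subset of $\overline{Y}$ lying in $B$, so density of $U_2\cap B$ in $B$ yields a point of $N\cap(\overline{Y}\setminus A)\cap U_2\subset W$; the symmetric case $N\not\subset B$ is handled by the second piece. The delicate remaining case is $N\subset A\cap B$: density of $U_1\cap A$ in $A$ makes $N\cap U_1$ a nonempty open subset of $\overline{Y}$ contained in $B$, and then density of $U_2\cap B$ in $B$ forces $N\cap U_1\cap U_2\neq\emptyset$, giving a point in the first piece of $W$. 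The only real obstacle is precisely this overlap case, where the two inductive open dense sets live on different closed subsets of $\overline{Y}$ and must be glued along $A\cap B$; the three-piece definition of $W$ is chosen exactly to accommodate this.
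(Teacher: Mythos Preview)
Your proof is correct. The induction is properly set up (every open subset of $\overline{Y}$ has the form $U\cap\overline{Y}$ with $U$ open in $X$, so the strengthened hypothesis is free), the inclusion $W\subset Y$ is checked correctly, and the three-case density argument is sound; in particular, in the overlap case $N\subset A\cap B$ you correctly use that $N\cap U_1$ is still open in $\overline{Y}$ and contained in $B$, so the second density hypothesis applies.

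The paper, however, does not argue by induction. It writes $Y=\bigcup_{i=1}^k Y_i$ with $Y_i$ locally closed, sets $Z_i=\overline{Y_i}\setminus Y_i$ and $Z=\bigcup_i Z_i$, and takes $W=\overline{Y}\setminus Z$ in one stroke. Containment $W\subset Y$ follows from $Y\cup Z=\overline{Y}$; openness from the fact that each $Z_i$ is closed in $X$; density is obtained by a short minimality argument (if some $\bigcup_{i\le i_0}Z_i$ contained a nonempty open of $\overline{Y}$, one peels off $Z_{i_0}$ and contradicts minimality). So the paper's route gives an explicit closed-form candidate for the open dense set and avoids your gluing construction; your route, by contrast, isolates the only nontrivial point (the overlap $A\cap B$) and handles it directly, which makes the mechanism transparent but costs the recursive definition of $W$. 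Both arguments are elementary and of comparable length.
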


\begin{proof}
Suppose $Y=\bigcup_{i=1}^kY_i$, where each $Y_i$ is locally closed.
Denote $Z_i=\overline{Y_i}\,\backslash Y_i$, $Z=\bigcup_{i=1}^kZ_i$.
We prove that $W=\overline{Y}\,\backslash Z$ is contained in $Y$ and
is open dense in $\overline{Y}$. We first prove that $W\subset Y$.
Since
$$Y\cup Z=\left(\bigcup_{i=1}^kY_i\right)\cup\left(\bigcup_{i=1}^kZ_i\right)
=\bigcup_{i=1}^k(Y_i\cup
Z_i)=\bigcup_{i=1}^k\overline{Y_i}=\overline{\bigcup_{i=1}^kY_i}=\overline{Y},$$
we have $W=\overline{Y}\,\backslash Z=(Y\cup Z)\backslash
Z=Y\backslash Z\subset Y$. Now we show that $W$ is open in
$\overline{Y}$. Since $Y_i$ is locally closed, it is open in
$\overline{Y_i}$. So $Z_i$ is closed in $\overline{Y_i}$, and hence
is closed in $X$. Thus $Z$ is closed, and hence $W$ is open in
$\overline{Y}$. Finally we prove that $W$ is dense in
$\overline{Y}$. If not, then $Z$ contains a nonempty open subset of
$\overline{Y}$. Let $i_0$ be the minimal index such that
$\bigcup_{i=1}^{i_0}Z_i$ contains a nonempty open subset, say $U$,
of $\overline{Y}$. We claim that $U\not\subset Z_i$ for any $1\le
i\le i_0$. Indeed, if $U\subset Z_i$, then $U$ is a nonempty open
subset of $\overline{Y_i}$ with $U\cap Y_i=\emptyset$, which is
impossible. This in particular implies that  $i_0>1$ and
$U\not\subset Z_{i_0}$. So $\emptyset\ne U\backslash
Z_{i_0}\subset\bigcup_{i=1}^{i_0-1}Z_i$. Since $Z_{i_0}$ is closed,
$U\backslash Z_{i_0}$ is open in $\overline{Y}$. This conflicts with
the minimality of $i_0$. Thus $W$ is dense in $\overline{Y}$.
\end{proof}

\begin{corollary}\label{C:constructible}
Let a Lie group $G$ act smoothly on a smooth manifold $M$, $M_\reg$
be an open dense subset of $M$, and $M'\subset M_\reg$ be a
$G$-invariant constructible set. If there exists a smooth measure
$\mu$ on $M$ such that $\mu(M_\reg\backslash M')=0$, then $M'$
contains a $G$-invariant open dense subset of $M$.
\end{corollary}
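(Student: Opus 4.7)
The plan is to combine Lemma \ref{L:constructible} with the fact that a smooth measure gives positive mass to every nonempty open set, and then $G$-saturate the resulting open dense subset.

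First I would apply Lemma \ref{L:constructible} to get an open dense subset $W$ of $\overline{M'}$ with $W\subset M'$. The main task is then to identify $\overline{M'}$ with all of $M$. To do this, I would argue that $M_\reg\subset\overline{M'}$: if not, $U:=M_\reg\setminus\overline{M'}$ would be a nonempty open subset of $M$ contained in $M_\reg\setminus M'$, and smoothness of $\mu$ forces $\mu(U)>0$, contradicting the hypothesis $\mu(M_\reg\setminus M')=0$. Since $M_\reg$ is dense in $M$, this yields $\overline{M'}=M$, so $W$ is an open dense subset of $M$ contained in $M'$.

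Finally I would produce a $G$-invariant such subset. Since $G$ acts by diffeomorphisms and $M'$ is $G$-invariant, the set $W':=\bigcup_{g\in G}gW$ is open, dense (as it contains $W$), $G$-invariant, and still contained in $M'$. Equivalently, one may simply take the interior of $M'$, which is automatically $G$-invariant because each $g\in G$ acts as a homeomorphism, and is dense by the argument above applied with $W$ replaced by $\Int(M')\supset W$.

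No step is really an obstacle: the only point requiring care is that ``smooth measure'' is used in the strong sense that every nonempty open set has positive measure, which is the standard meaning in this context and is what makes the measure-theoretic smallness of $M_\reg\setminus M'$ translate into topological largeness of $M'$ inside $M_\reg$.
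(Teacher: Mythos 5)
Your proof is correct and follows essentially the same route as the paper: use smoothness of $\mu$ to show $M'$ is dense in $M_\reg$ and hence in $M$, apply Lemma \ref{L:constructible} to extract an open dense subset $U$ of $M=\overline{M'}$ inside $M'$, and then $G$-saturate by taking $\bigcup_{g\in G}gU$ (or, equivalently, take $\mathrm{Int}(M')$). The only cosmetic difference is that you spell out the density step as a contradiction argument, where the paper states it directly.
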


\begin{proof}
Since $\mu$ is smooth and $\mu(M_\reg\backslash M')=0$, $M'$ is
dense in $M_\reg$, and hence is dense in $M$. Since $M'$ is
constructible, by Lemma \ref{L:constructible}, $M'$ contains an open
dense subset $U$ of $M$. But $M'$ is $G$-invariant. So the
$G$-invariant open dense subset $\bigcup_{g\in G}gU$ of $M$ is
contained in $M'$.
\end{proof}

\begin{lemma}\label{L:Fconstructible}
Let $G$ and $H$ be Lie groups with $G$ connected, and let $G\times
H$ act smoothly on a smooth manifold $M$. Consider the induced
$G$-action on $M/H$. Then $(M/H)^G$ is constructible with respect to
the quotient topology on $M/H$.
\end{lemma}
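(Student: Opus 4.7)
The plan is to realize $(M/H)^G$ as the image under $\pi : M \to M/H$ of a finite disjoint union of $H$-saturated locally closed subsets of $M$. Because $G$ and $H$ commute (as factors of $G\times H$), one has $\pi^{-1}((M/H)^G) = M^{G,H} := \{x \in M : Gx \subset Hx\}$. For any $H$-saturated locally closed $B \subset M$, the decomposition $B = \overline{B} \cap (M \setminus (\overline{B}\setminus B))$ expresses $B$ as the intersection of an $H$-saturated closed set and an $H$-saturated open set (both $\overline{B}$ and $\overline{B}\setminus B$ inherit $H$-saturatedness from $B$), and this descends to a locally closed presentation of $\pi(B)$ in $M/H$. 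It therefore suffices to exhibit $M^{G,H}$ as such a finite union.

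Let $\phi_x : \Lg \to T_xM$ and $\psi_x : \Lh \to T_xM$ denote the evaluations at $x$ of the induced infinitesimal actions, and set
$$N_j := \{x \in M : \rank[\phi_x,\psi_x] = \rank\psi_x = j\}, \qquad N := \bigsqcup_j N_j = \{x \in M: \phi_x(\Lg) \subset \psi_x(\Lh)\}.$$
Lower semicontinuity of the two rank functions makes each $N_j$ the intersection of an open set with two closed sets, hence locally closed. The commutativity of the $G$- and $H$-actions yields $\phi_{hx}(\Lg) = h_*\phi_x(\Lg)$ and $\psi_{gx}(\Lh) = g_*\psi_x(\Lh)$, together with the standard equivariances $\phi_{gx}(\Lg) = g_*\phi_x(\Lg)$ and $\psi_{hx}(\Lh) = h_*\psi_x(\Lh)$, so each $N_j$ is $G \times H$-invariant and in particular $H$-saturated.

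It remains to prove $N = M^{G,H}$; the inclusion $M^{G,H} \subset N$ is immediate by differentiation at $x$. Conversely, fix $x \in N$ and let $j = \dim H_0\cdot x = \rank\psi_x$. Since $G\times H_0$ is connected, the orbit $(G\times H_0)\cdot x$ is a connected immersed submanifold of $M$. For any $y = gh\cdot x$ in this orbit, the $G\times H$-invariance of $N$ gives $T_y((G\times H_0)\cdot x) = \phi_y(\Lg)+\psi_y(\Lh) = \psi_y(\Lh) = T_y(H_0\cdot y)$, and $\dim H_0\cdot y = j$ because $g$ is a diffeomorphism commuting with $H_0$ that carries $H_0\cdot x$ onto $H_0\cdot y$. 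Hence the inclusion $H_0\cdot y \hookrightarrow (G\times H_0)\cdot x$ is an injective immersion between connected immersed submanifolds of the same dimension $j$, and therefore an open embedding. The $H_0$-orbits contained in $(G\times H_0)\cdot x$ thus partition it into nonempty disjoint open subsets; connectedness forces a single piece, so $(G\times H_0)\cdot x = H_0\cdot x$, giving $Gx \subset H_0 x \subset Hx$ and $x \in M^{G,H}$. Combining the previous steps, $(M/H)^G = \bigsqcup_j \pi(N_j)$ is a finite disjoint union of locally closed subsets of $M/H$, i.e., constructible.

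The main obstacle is this last implication $N \subset M^{G,H}$: the condition $\phi_x(\Lg) \subset \psi_x(\Lh)$ is purely tangential at $x$, and non-constant rank of $\psi$ along the $G$-orbit generally obstructs a direct Frobenius-type integration. The dimension/connectedness trick, executed inside the $(G\times H_0)$-orbit where the rank of the combined infinitesimal action is automatically constant and equal to the rank of $\psi$, neatly sidesteps this difficulty without having to stratify $M$ by orbit type.
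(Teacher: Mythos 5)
Your proof is correct and, upon inspection, uses exactly the same stratification as the paper: your $N_j=\{\mathrm{rk}\,[\phi_x,\psi_x]=\mathrm{rk}\,\psi_x=j\}$ is literally the paper's $U_j\cap V_j$ with $U_j=\{\dim Hx\ge j\}$ (open, $H$-invariant) and $V_j=\{\dim(G\times H)x\le j\}$ (closed, $H$-invariant), since $Hx\subset(G\times H)x$ forces both dimensions to equal $j$ on that set. The one genuine difference is the argument that this condition forces $Gx\subset Hx$. The paper stays at the group level: writing $K=G\times H$, the dimension inequalities give $\dim p_1(K(x))\ge\dim G$ for the projection $p_1:K\to G$, so $p_1(K(x))=G$ by connectedness, whence $H\cdot K(x)=K$ and $Kx=Hx$. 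You instead argue geometrically inside the orbit $(G\times H_0)x$: every $H_0$-orbit in it has the same dimension $j$ as the ambient orbit, hence is open, and connectedness forces a single $H_0$-orbit. Both hinge on connectedness of $G$ in the same essential way, and both are correct; the paper's stabilizer-projection computation is a bit more elementary in that it avoids any appeal to orbits being weakly-embedded/initial submanifolds (needed to make your ``injective immersion of the same dimension, hence open'' step rigorous), while your version makes the differential-geometric mechanism more visible. Your preliminary reduction (that $H$-invariant locally closed sets in $M$ push forward to locally closed sets in $M/H$) is also correct, though the paper short-circuits it by presenting each piece directly as $U_i\cap V_i$ with $U_i$ open and $V_i$ closed, which descends immediately.
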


\begin{proof}
It suffices to prove that the preimage $P$ of $(M/H)^G$ under the
quotient map $M\to M/H$ is of the form $\bigcup_{i=0}^kU_i\cap V_i$,
where $U_i$ is $H$-invariant and open, $V_i$ is $H$-invariant and
closed. If we denote $K=G\times H$, it is easy to see that
$$P=\{x\in M\mid Kx=Hx\}.$$ For $0\le i\le\dim H$, we denote
$$U_i=\{x\in M\mid\dim Hx\ge i\}, \quad
V_i=\{x\in M\mid\dim Kx\le i\}.$$ Then $U_i$ is $H$-invariant and
open, $V_i$ is $H$-invariant and closed. We prove the lemma by
showing that $P=\bigcup_{i=0}^{\dim H}U_i\cap V_i$. It is obvious
that $P\subset\bigcup_{i=0}^{\dim H}U_i\cap V_i$. To prove the
converse, suppose that $x\in U_i\cap V_i$ for some $i$. Let
$p_1:K\to G$ be the projection. Then
\begin{align*}
\dim p_1(K(x))&=\dim K(x)-\dim H(x)\\
&=(\dim K-\dim Kx)-(\dim H-\dim Hx)\\
&\ge(\dim K-i)-(\dim H-i)=\dim G.
\end{align*}
Since $G$ is connected, we must have $p_1(K(x))=G$. This means that
$H\cdot K(x)=K$. Thus $Kx=Hx$, and hence $x\in P$. This completes
the proof.
\end{proof}

\begin{remark}
The set $(M/H)^G$ need not to be locally closed. For example, if
$G=H=\GL(n,\CC)$, $M$ is the space of $n\times n$ complex matrices,
and $G\times H$ acts on $M$ by $(g,h)A=gAh^{-1}$, then the preimage
of $(M/H)^G$ under $M\to M/H$ is equal to $\GL(n,\CC)\cup\{0\}$,
which is not locally closed. Hence $(M/H)^G$ is not locally closed.
\end{remark}

\subsection{$G$-saturated sets}

Let a topological group $G$ act continuously on a topological space
$X$. A subset $Y\subset X$ is \emph{$G$-saturated} if $x\in X$,
$y\in Y$ and $\overline{Gx}=\overline{Gy}$ imply that $x\in Y$.
Obviously, $G$-saturated sets are $G$-invariant.

\begin{lemma}\label{L:boolean}
Let $G$ and $X$ be as above. Then the family of $G$-saturated
subsets of $X$ is closed under intersection, union, and complement.
\end{lemma}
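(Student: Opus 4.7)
The plan is to recognize the notion of $G$-saturated in purely equivalence-relation terms. Define a relation $\sim$ on $X$ by $x\sim y$ iff $\overline{Gx}=\overline{Gy}$. This is clearly reflexive, symmetric and transitive, so it is an equivalence relation on $X$, and it partitions $X$ into equivalence classes $[x]=\{z\in X\mid \overline{Gz}=\overline{Gx}\}$.

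Next I would observe that the definition of $G$-saturation can be rephrased as: $Y\subset X$ is $G$-saturated iff for every $y\in Y$ the whole class $[y]$ is contained in $Y$, i.e., $Y$ is a union of $\sim$-equivalence classes. Indeed, the condition ``$x\in X$, $y\in Y$, $\overline{Gx}=\overline{Gy}\Rightarrow x\in Y$'' is exactly the statement that $[y]\subset Y$ whenever $y\in Y$.

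Once this reformulation is in place, the three closure properties follow from a general and elementary fact about equivalence relations: the collection of subsets of $X$ that are unions of equivalence classes under any fixed equivalence relation forms a Boolean subalgebra of $2^X$. Concretely, if $\{Y_\alpha\}$ is a family of unions of $\sim$-classes, then $\bigcap_\alpha Y_\alpha$ is the union of those classes lying in every $Y_\alpha$ and $\bigcup_\alpha Y_\alpha$ is the union of those classes lying in some $Y_\alpha$; and if $Y$ is a union of $\sim$-classes, then $X\backslash Y$ is the union of the remaining classes.

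There is no real obstacle here; the only subtlety worth double-checking is that the defining implication uses only ``$y\in Y$'' on one side, so saturation really does mean union-of-classes rather than something weaker (like mere $G$-invariance $\overline{Gy}\subset Y$). Since $\overline{Gx}=\overline{Gy}$ is symmetric in $x$ and $y$, no issue arises, and the lemma reduces to the Boolean-algebra observation above.
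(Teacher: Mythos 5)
Your proposal is essentially identical to the paper's proof: both define the equivalence relation $x\sim y\iff\overline{Gx}=\overline{Gy}$, reformulate $G$-saturation as ``union of equivalence classes,'' and conclude by the standard Boolean-algebra fact. Your write-up is correct and merely spells out the details the paper leaves implicit.
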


\begin{proof}
We define an equivalence relation on $M$ by setting $x\sim y$ if and
only if $\overline{Gx}=\overline{Gy}$. Then a subset of $X$ is
$G$-saturated if and only if it is the union of some equivalence
classes. Hence the lemma follows.
\end{proof}

\begin{lemma}\label{L:saturated}
Let $G$ and $X$ be as above. Then any $G$-invariant locally closed
subset of $X$ is $G$-saturated.
\end{lemma}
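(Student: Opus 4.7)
The plan is to exploit the alternative characterization of a locally closed subset as one that is open in its own closure, together with the fact that a continuous action by homeomorphisms commutes with taking closures.

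First I would fix a $G$-invariant locally closed subset $Y \subset X$ and observe that its closure $\overline{Y}$ is again $G$-invariant: for each $g \in G$ the map $x \mapsto gx$ is a homeomorphism, so $g\overline{Y} = \overline{gY} = \overline{Y}$. Because $Y$ is locally closed, it is open in $\overline{Y}$, so the complement $Z := \overline{Y} \setminus Y$ is closed in $\overline{Y}$; and since $Y$ is $G$-invariant, so is $Z$.

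Next I would take $x \in X$ and $y \in Y$ with $\overline{Gx} = \overline{Gy}$, aiming to deduce $x \in Y$. From $Gy \subset Y \subset \overline{Y}$ and the closedness of $\overline{Y}$ I immediately get $\overline{Gx} = \overline{Gy} \subset \overline{Y}$, which places $x$ in $\overline{Y}$. Assuming for contradiction that $x \notin Y$ puts $x$ in $Z$; by $G$-invariance of $Z$ the whole orbit $Gx$ lies in $Z$, and since $Z$ is closed in $\overline{Y}$ (which itself contains $\overline{Gx}$) we obtain $\overline{Gx} \subset Z$. But then $y \in \overline{Gy} = \overline{Gx} \subset \overline{Y} \setminus Y$, contradicting $y \in Y$.

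The argument is essentially bookkeeping once one unpacks ``locally closed'' as ``open in its closure,'' so I do not anticipate a genuine obstacle; the only subtle point is remembering that closures have to be taken inside $\overline{Y}$ (where $Z$ is closed) rather than inside $X$ (where $Z$ need not be), but this is automatic because $\overline{Gx}$ already lies in $\overline{Y}$.
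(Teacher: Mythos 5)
Your argument is correct. You argue directly from the definition: you use the characterization of a locally closed set as one that is open in its own closure, note that $\overline Y$ and $Z=\overline Y\setminus Y$ are both $G$-invariant, and derive a contradiction by showing that if $x\notin Y$ then the entire orbit closure $\overline{Gx}$ would sit inside $Z$, which would expel $y$ from $Y$. The subtle point you flag (that $Z$ is closed only relative to $\overline Y$, but this suffices because $\overline{Gx}\subset\overline Y$) is handled correctly.

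The paper proceeds differently: it first observes that $G$-invariant closed sets are $G$-saturated, invokes the boolean-algebra lemma (Lemma~\ref{L:boolean}) to conclude that $G$-invariant open sets and intersections of $G$-saturated sets are $G$-saturated, and then reduces the problem to showing that any $G$-invariant locally closed $Y=U\cap V$ (with $U$ open, $V$ closed) can be rewritten as $U'\cap V'$ where $U'=\bigcup_{g} gU$ is $G$-invariant open and $V'=\bigcap_{g} gV$ is $G$-invariant closed. Your route is more self-contained --- it never needs Lemma~\ref{L:boolean} --- and is arguably the more natural unwinding of the definition. The paper's route is more structural: it isolates the reusable fact that a $G$-invariant locally closed set is an intersection of a $G$-invariant open set with a $G$-invariant closed set, and leaves the bookkeeping to the boolean-algebra lemma. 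Either is a clean proof.
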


\begin{proof}
Obviously, $G$-invariant closed sets are $G$-saturated. Then Lemma
\ref{L:boolean} implies that $G$-invariant open sets are also
$G$-saturated. By Lemma \ref{L:boolean} again, it suffices to prove
that any $G$-invariant locally closed set $Y\subset X$ is the
intersection of a $G$-invariant open set and a $G$-invariant closed
set. Suppose that $Y=U\cap V$, where $U$ is open, $V$ is closed. Let
$U'=\bigcup_{g\in G}gU$, $V'=\bigcap_{g\in G}gV$. Then $U'$ is
$G$-invariant and open, $V'$ is $G$-invariant and closed. We claim
that $Y=U'\cap V'$. Indeed, since $Y$ is $G$-invariant, we have
$$Y=\bigcap_{g\in G}gY=\bigcap_{g\in G}(gU\cap
gV)=\left(\bigcap_{g\in G}gU\right)\cap\left(\bigcap_{g\in
G}gV\right)\subset U'\cap V'$$ and $$Y=\bigcup_{g\in
G}gY=\bigcup_{g\in G}(gU\cap gV)\supset\bigcup_{g\in G}(gU\cap
V')=\left(\bigcup_{g\in G}gU\right)\cap V'=U'\cap V'.$$ This
completes the proof.
\end{proof}

\begin{lemma}\label{L:inverseimage}
Let a topological group $G$ act continuously on two topological
spaces $X_1$ and $X_2$, and let $\theta:X_1\to X_2$ be a
$G$-equivariant continuous map. If $Y\subset X_2$ is $G$-saturated,
then so is $\theta^{-1}(Y)$.
\end{lemma}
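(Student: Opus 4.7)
The plan is to unwind the definition directly and use continuity plus equivariance of $\theta$ to push the closure condition from $X_1$ down to $X_2$. Concretely, I would take $x_1\in X_1$ and $y_1\in\theta^{-1}(Y)$ with $\overline{Gx_1}=\overline{Gy_1}$, and aim to show that $\theta(x_1)\in Y$.

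The key step is to establish the identity
\[
\overline{G\theta(x_1)}=\overline{\theta(\overline{Gx_1})}.
\]
One inclusion is immediate since $Gx_1\subset\overline{Gx_1}$ gives $G\theta(x_1)=\theta(Gx_1)\subset\theta(\overline{Gx_1})$, hence $\overline{G\theta(x_1)}\subset\overline{\theta(\overline{Gx_1})}$. For the reverse inclusion, continuity of $\theta$ yields $\theta(\overline{Gx_1})\subset\overline{\theta(Gx_1)}=\overline{G\theta(x_1)}$, and taking closures finishes the identity. The same identity holds for $y_1$, so from $\overline{Gx_1}=\overline{Gy_1}$ I get
\[
\overline{G\theta(x_1)}=\overline{\theta(\overline{Gx_1})}=\overline{\theta(\overline{Gy_1})}=\overline{G\theta(y_1)}.
\]

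Since $\theta(y_1)\in Y$ and $Y$ is $G$-saturated in $X_2$, this forces $\theta(x_1)\in Y$, i.e.\ $x_1\in\theta^{-1}(Y)$. This is exactly what is needed to conclude that $\theta^{-1}(Y)$ is $G$-saturated. There is no real obstacle here; the only thing to be careful about is ensuring that $G$-equivariance of $\theta$ is used (to identify $\theta(Gx_1)$ with $G\theta(x_1)$) and that continuity is used in the correct direction (images of closures lie in closures of images), both of which are straightforward.
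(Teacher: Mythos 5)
Your proof is correct and follows essentially the same route as the paper's: both establish the chain $\overline{G\theta(x_1)}=\overline{\theta(\overline{Gx_1})}=\overline{\theta(\overline{Gy_1})}=\overline{G\theta(y_1)}$ using equivariance and continuity of $\theta$, then invoke $G$-saturation of $Y$. You have merely spelled out the two-inclusion argument behind $\overline{\theta(Gx_1)}=\overline{\theta(\overline{Gx_1})}$ a bit more explicitly than the paper does.
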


\begin{proof}
Suppose $x\in X_1$, $y\in\theta^{-1}(Y)$ and
$\overline{Gx}=\overline{Gy}$. Since $Y$ is $G$-saturated,
$\theta(y)\in Y$, and
$$\overline{G\theta(x)}=\overline{\theta(Gx)}=\overline{\theta(\overline{Gx})}
=\overline{\theta(\overline{Gy})}=\overline{\theta(Gy)}=\overline{G\theta(y)},$$
we have $\theta(x)\in Y$. Hence $x\in\theta^{-1}(Y)$. This completes
the proof.
\end{proof}

\begin{lemma}\label{L:minimal-saturated}
Let a Lie group $G$ act smoothly on a compact smooth manifold $M$,
$M_0\subset M$ be a $G$-minimal set, and $\mu$ be a $G$-invariant
Borel measure on $M$ supported on $M_0$. If $M'\subset M$ is a
$G$-saturated Borel set with $\mu(M')>0$, then $M_0\subset M'$.
\end{lemma}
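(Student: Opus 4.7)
The argument should be very short and uses little more than the definitions.

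The plan is to first exploit the support condition on $\mu$ to produce a point of $M'$ lying in $M_0$. Since $\mu$ is supported on $M_0$, we have $\mu(M' \setminus M_0) = 0$, so the hypothesis $\mu(M') > 0$ forces $\mu(M' \cap M_0) > 0$; in particular $M' \cap M_0 \neq \emptyset$. Pick any $y \in M' \cap M_0$.

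Next I invoke minimality. Since $M_0$ is a $G$-minimal set (nonempty, closed, $G$-invariant, with no proper nonempty closed $G$-invariant subset), the orbit closure $\overline{Gy}$ is a nonempty closed $G$-invariant subset of $M_0$, so by minimality $\overline{Gy} = M_0$. Now take any $x \in M_0$. The same reasoning gives $\overline{Gx} = M_0 = \overline{Gy}$. Since $y \in M'$ and $M'$ is $G$-saturated, the defining property of $G$-saturation yields $x \in M'$. Thus $M_0 \subset M'$, as required.

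There is no substantive obstacle here: once the support condition has been used to locate a witness point $y \in M' \cap M_0$, the $G$-minimality of $M_0$ collapses all orbit closures inside $M_0$ to $M_0$ itself, and $G$-saturation of $M'$ then immediately propagates membership from $y$ to every point of $M_0$. The compactness of $M$ and the smoothness of the $G$-action are not actually used in the argument beyond ensuring that the notion of $G$-minimal set behaves in the usual way.
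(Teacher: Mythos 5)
Your proof is correct and follows essentially the same route as the paper's: use the support condition to find a witness point in $M' \cap M_0$, then use minimality to equate all orbit closures in $M_0$, and conclude via $G$-saturation. The only difference is cosmetic — you spell out the measure-theoretic step $\mu(M'\setminus M_0)=0$ and the identity $\overline{Gy}=M_0$ explicitly, whereas the paper leaves them implicit.
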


\begin{proof}
Since $\mu$ is supported on $M_0$ and $\mu(M')>0$, we have $M_0\cap
M'\ne\emptyset$. Choose $x_0\in M_0\cap M'$, and let $x\in M_0$.
Since $M_0$ is $G$-minimal, we have $\overline{Gx}=\overline{Gx_0}$.
So $x\in M'$. Hence $M_0\subset M'$.
\end{proof}

\subsection{Subquotients of Lie algebras}

For simplicity, we assume that all Lie algebras in this subsection
are real and finite-dimensional.

\begin{lemma}\label{L:subquotient1}
Let $\La,\Lb,\Lc$ be Lie algebras. If $\La\prec\Lb$ and
$\Lb\prec\Lc$, then $\La\prec\Lc$.
\end{lemma}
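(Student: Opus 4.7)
The plan is to prove transitivity of $\prec$ by a direct application of the correspondence theorem for Lie algebras. Unwinding the definitions, $\Lb\prec\Lc$ gives subalgebras $\Lc_2\lhd\Lc_1<\Lc$ together with an isomorphism $\varphi:\Lc_1/\Lc_2\xrightarrow{\sim}\Lb$, while $\La\prec\Lb$ gives $\Lb_2\lhd\Lb_1<\Lb$ with $\Lb_1/\Lb_2\cong\La$. The goal is to exhibit subalgebras $\Lc_2'\lhd\Lc_1'<\Lc$ whose quotient is isomorphic to $\La$.

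First I would let $\pi:\Lc_1\to\Lc_1/\Lc_2$ be the canonical projection, and set
\[
\Lc_1'=\pi^{-1}(\varphi^{-1}(\Lb_1)),\qquad \Lc_2'=\pi^{-1}(\varphi^{-1}(\Lb_2)).
\]
Then I would check the formal properties: $\Lc_1'$ is a subalgebra of $\Lc_1$ (hence of $\Lc$) since it is the preimage of a subalgebra under a Lie algebra homomorphism, and $\Lc_2'$ is an ideal in $\Lc_1'$ because $\varphi^{-1}(\Lb_2)$ is an ideal in $\varphi^{-1}(\Lb_1)$ and taking preimages under $\pi$ preserves both the subalgebra and ideal relations (using that $\ker\pi=\Lc_2\subset\Lc_2'$).

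Finally I would apply the third isomorphism theorem for Lie algebras to obtain
\[
\Lc_1'/\Lc_2'\;\cong\;\bigl(\Lc_1'/\Lc_2\bigr)\big/\bigl(\Lc_2'/\Lc_2\bigr)\;\cong\;\varphi^{-1}(\Lb_1)/\varphi^{-1}(\Lb_2)\;\cong\;\Lb_1/\Lb_2\;\cong\;\La,
\]
which shows $\La\prec\Lc$. There is really no main obstacle here; the only point requiring a moment of care is the verification that $\Lc_2'\lhd\Lc_1'$, which is immediate from the correspondence between ideals in $\Lc_1'$ containing $\Lc_2$ and ideals in $\Lc_1'/\Lc_2$.
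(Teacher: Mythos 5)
Your proof is correct, and it is essentially the same argument as the paper's, differing only in presentation. The paper phrases a subquotient as the image of a subalgebra under a surjective homomorphism: from $\psi:\Lc_1\to\Lb$ surjective and $\varphi:\Lb_1\to\La$ surjective (with $\Lb_1<\Lb$), it simply sets $\Lc_2=\psi^{-1}(\Lb_1)$ and observes that $\varphi\circ\psi|_{\Lc_2}:\Lc_2\to\La$ is a surjective homomorphism from a subalgebra of $\Lc$, which already gives $\La\prec\Lc$. You instead carry the pair (subalgebra, ideal) through explicitly, take preimages of both $\Lb_1$ and $\Lb_2$, and invoke the third isomorphism theorem to identify the quotient. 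Both routes pull back the subquotient structure along the same homomorphism; yours makes the ideal visible and spells out the isomorphism, the paper's collapses the bookkeeping by composing surjections. No gap in either.
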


\begin{proof}
By definition, there exist $\Lb_1<\Lb$, $\Lc_1<\Lc$, and surjective
homomorphisms $\varphi:\Lb_1\to\La$, $\psi:\Lc_1\to\Lb$. Let
$\Lc_2=\psi^{-1}(\Lb_1)$. Then
$\varphi\circ\psi|_{\Lc_2}:\Lc_2\to\La$ is a surjective
homomorphism. Thus $\La\prec\Lc$.
\end{proof}

\begin{lemma}\label{L:subquotient2}
Let $\La$, $\Lb$ be Lie algebras. If $\La\prec\Lb$, then
$\ad(\La)\prec\ad(\Lb)$.
\end{lemma}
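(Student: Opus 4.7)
The plan is to unwind the definition of $\prec$ and to descend the given subalgebra/ideal pair through the canonical projection $\pi: \Lb \to \ad(\Lb) = \Lb/Z(\Lb)$, where $Z(\Lb)$ denotes the center.

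By hypothesis there exist $\Lb_1 < \Lb$ and $\Lb_2 \lhd \Lb_1$ together with a surjection $p: \Lb_1 \to \La$ having kernel $\Lb_2$. The natural ideal of $\Lb_1$ to quotient by is $\Lb_3 := p^{-1}(Z(\La))$; this is an ideal of $\Lb_1$ because $Z(\La) \lhd \La$ and $p$ is surjective, and it obviously satisfies $\Lb_1/\Lb_3 \cong \La/Z(\La) = \ad(\La)$. So far this only shows $\ad(\La) \prec \Lb$, and the task is to push the conclusion down to $\ad(\Lb)$.

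The key observation is the inclusion $\Lb_1 \cap Z(\Lb) \subset \Lb_3$: any $z \in \Lb_1 \cap Z(\Lb)$ commutes with every element of $\Lb$, hence in particular with $\Lb_1$, so $p(z)$ is central in $\La$, which means $z \in \Lb_3$. Given this, $\pi(\Lb_1) \cong \Lb_1/(\Lb_1 \cap Z(\Lb))$ is a subalgebra of $\ad(\Lb)$, and since $\Lb_3 \lhd \Lb_1$ we see that $\pi(\Lb_3) \cong \Lb_3/(\Lb_1 \cap Z(\Lb))$ is an ideal of $\pi(\Lb_1)$. The third isomorphism theorem then yields $\pi(\Lb_1)/\pi(\Lb_3) \cong \Lb_1/\Lb_3 \cong \ad(\La)$, exhibiting $\ad(\La)$ as a subquotient of $\ad(\Lb)$.

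The only delicate point is the containment $\Lb_1 \cap Z(\Lb) \subset \Lb_3$, which makes the central quotient compatible with the subquotient data in $\Lb$; once that is in hand, the remainder is a routine application of the isomorphism theorems, so I do not anticipate a serious obstacle.
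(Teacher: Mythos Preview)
Your proof is correct. The key inclusion $\Lb_1 \cap Z(\Lb) \subset \Lb_3 = p^{-1}(Z(\La))$ is exactly what is needed, and the rest is indeed routine application of the isomorphism theorems.

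The paper's argument rests on the same observation but is organized as a chain of $\prec$ relations invoking the transitivity lemma (Lemma~\ref{L:subquotient1}): it passes through the intermediate quotients $\La/\varphi(Z(\Lb_1))$, $\Lb_1/Z(\Lb_1)$, and $\Lb_1/(Z(\Lb)\cap\Lb_1)$ before landing in $\ad(\Lb)$. Your version is more economical: by taking $\Lb_3 = p^{-1}(Z(\La))$ from the outset and pushing it forward under $\pi$, you produce the witnessing pair $(\pi(\Lb_1), \pi(\Lb_3))$ in $\ad(\Lb)$ in one step, bypassing both the intermediate $Z(\Lb_1)$ and the appeal to transitivity. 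The two arguments are equivalent in content, but yours is a slightly cleaner packaging of the same idea.
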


\begin{proof}
Let $\varphi:\Lb_1\to\La$ be a surjective homomorphism, where
$\Lb_1<\Lb$. Since $\varphi(Z(\Lb_1))\subset Z(\La)$, we have
\begin{align*}
\ad(\La)&\cong\La/Z(\La)\prec\La/\varphi(Z(\Lb_1))\cong\Lb_1/\varphi^{-1}(\varphi(Z(\Lb_1)))\\
&\prec\Lb_1/Z(\Lb_1)\prec\Lb_1/(Z(\Lb)\cap\Lb_1)<\Lb/Z(\Lb)\cong\ad(\Lb).
\end{align*}
This completes the proof.
\end{proof}

\begin{lemma}\label{L:subquotient3}
Let $\Lf$ be a Lie algebra, and let $\Lg, \Lh<\Lf$ be such that
$[\Lg,\Lh]=0$ and $\Lf=\Lg+\Lh$. Let $V$ be a real vector space,
$\ell:\Lf\to V$ be a linear map such that $\ker(\ell)<\Lf$ and
$\Lf=\Lh+\ker(\ell)$. Then there exist $\Lz<\Lh$ and a Lie algebra
structure on $\ell(\Lg)$ such that $\ell(\Lz)=\ell(\Lg)$, and such
that $\ell|_\Lg:\Lg\to \ell(\Lg)$ and $-\ell|_\Lz:\Lz\to \ell(\Lg)$
are Lie algebra homomorphisms. In particular, we have
$\ell(\Lg)\prec\Lh$.
\end{lemma}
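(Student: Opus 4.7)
The plan is to take $\Lz$ to be the preimage $\Lh\cap\ell^{-1}(\ell(\Lg))$ of $\ell(\Lg)$ under the restriction $\ell|_\Lh$. The hypothesis $\Lf=\Lh+\ker(\ell)$ forces $\ell(\Lh)=\ell(\Lf)\supset\ell(\Lg)$, so the identity $\ell(\Lz)=\ell(\Lg)$ is built into the definition. I will define the bracket on $\ell(\Lg)$ by $[\ell(X_1),\ell(X_2)]:=\ell([X_1,X_2])$ for $X_1,X_2\in\Lg$; with this convention $\ell|_\Lg$ is tautologically a homomorphism, and the compatibility of $-\ell|_\Lz$ should fall out from the same data once the key identity below is in place.

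The main technical step, and the step I expect to be the main obstacle, is a single bracket identity that will simultaneously show that $\Lz$ is a subalgebra, explain the sign flip in the statement, and secure well-definedness of the bracket on $\ell(\Lg)$. Given $Y_1,Y_2\in\Lz$, pick lifts $X_i\in\Lg$ with $\ell(X_i)=\ell(Y_i)$ and set $K_i=Y_i-X_i\in\ker(\ell)$. Expanding $[Y_1,Y_2]=[X_1+K_1,X_2+K_2]$ and using $[\Lg,\Lh]=0$, the vanishing relations $[X_1,Y_2]=0=[Y_1,X_2]$ force $[X_1,K_2]=[K_1,X_2]=-[X_1,X_2]$, which I expect to collapse the expansion to
\[
[Y_1,Y_2]=-[X_1,X_2]+[K_1,K_2].
\]
Since $\ker(\ell)$ is a subalgebra, the last term lies in $\ker(\ell)$, and applying $\ell$ yields the key identity $\ell([Y_1,Y_2])=-\ell([X_1,X_2])$. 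Because the right-hand side lies in $\ell(\Lg)$, this already gives $[Y_1,Y_2]\in\Lz$, hence $\Lz<\Lh$.

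To finish, I would argue well-definedness of the bracket on $\ell(\Lg)$: if $X_1,X_1'\in\Lg$ satisfy $\ell(X_1)=\ell(X_1')$, choose any lift $Y_1\in\Lz$ of this common value and any lift $Y_2$ of $\ell(X_2)$; two applications of the key identity give $\ell([X_1,X_2])=-\ell([Y_1,Y_2])=\ell([X_1',X_2])$. Once the bracket is well-defined, $\ell|_\Lg$ is a Lie algebra homomorphism by construction, and the same identity, rewritten via the bilinearity relation $[-v,-w]=[v,w]$ in $\ell(\Lg)$, reads $-\ell([Y_1,Y_2])=[-\ell(Y_1),-\ell(Y_2)]$, confirming that $-\ell|_\Lz$ is also a homomorphism. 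The concluding assertion $\ell(\Lg)\prec\Lh$ is then automatic, since $-\ell|_\Lz:\Lz\to\ell(\Lg)$ is a surjective Lie algebra homomorphism, realizing $\ell(\Lg)\cong\Lz/(\Lz\cap\ker(\ell))$ as a subquotient of $\Lh$.
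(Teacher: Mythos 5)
Your proof is correct and follows essentially the same route as the paper's: your $\Lz=\Lh\cap\ell^{-1}(\ell(\Lg))$ is the same subspace as the paper's $\Lh\cap(\Lg+\ker\ell)$, and the key identity $\ell([Y_1,Y_2])=-\ell([X_1,X_2])$ is exactly the paper's central computation. The only organizational difference is that the paper first proves $\Lg\cap\ker(\ell)\lhd\Lf$ outright to justify the induced Lie structure on $\ell(\Lg)$, whereas you extract well-definedness of the bracket as another consequence of the same identity — a slightly tighter packaging of the same argument.
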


\begin{proof}
Let $\Lk=\ker(\ell)$, $\Lz=\Lh\cap(\Lg+\Lk)$. Then $\ell(\Lz)\subset
\ell(\Lg+\Lk)=\ell(\Lg)$. On the other hand, it is easy to see that
$\Lg=\Lg\cap(\Lh+\Lk)\subset\Lz+\Lk$. So
$\ell(\Lg)\subset\ell(\Lz+\Lk)=\ell(\Lz)$. Hence
$\ell(\Lz)=\ell(\Lg)$.

We claim that $\Lg\cap\Lk\lhd\Lf$. Indeed, since $[\Lg,\Lh]=0$ and
$\Lf=\Lg+\Lh$, we have $\Lg\lhd\Lf$, and hence $\Lg\cap\Lk\lhd\Lk$.
Thus
$[\Lf,\Lg\cap\Lk]=[\Lh,\Lg\cap\Lk]+[\Lk,\Lg\cap\Lk]\subset\Lg\cap\Lk$.
This verifies the claim. Note that $\ker(\ell|_\Lg)=\Lg\cap\Lk$. So
there exists a Lie algebra structure on $\ell(\Lg)$ such that
$\ell|_\Lg$ is a homomorphism. It remains to prove that $-\ell|_\Lz$
is a homomorphism. Let $Z_i\in\Lz$, $i=1,2$. Then $Z_i=X_i+Y_i$ for
some $X_i\in\Lg$ and $Y_i\in\Lk$. Since
\begin{align*}
[Z_1,Z_2]&=[X_1,X_2]+[Y_1,Y_2]+[X_1,Y_2]+[Y_1,X_2]\\
&=[X_1,X_2]+[Y_1,Y_2]+[X_1,Z_2-X_2]+[Z_1-X_1,X_2]\\
&=-[X_1,X_2]+[Y_1,Y_2],
\end{align*}
we have
$$-\ell([Z_1,Z_2])=\ell([X_1,X_2])=[\ell(X_1),\ell(X_2)]=[-\ell(Z_1),-\ell(Z_2)].$$
Thus $-\ell|_\Lz$ is a homomorphism.
\end{proof}

\section{Review of geometric structures}\label{S:review}

In this section, we briefly review some definitions and facts in
Gromov's theory of rigid geometric structures \cite{Gr}. Let $n$ and
$r$ be positive integers. We denote by $\GL^r(n)$ the real algebraic
group of $r$-jets at $0\in\RR^n$ of diffeomorphisms of $\RR^n$ that
fix $0$. Let $M$ be a connected $n$-dimensional $C^\varepsilon$
manifold, where $\varepsilon=\infty$ or $\omega$. We denote by
$F^r(M)$ the $r$-th order frame bundle of $M$. This is a principal
$\GL^r(n)$-bundle over $M$, whose fiber $F^r(M)_x$ at $x\in M$
consists of the $r$-jets at $0\in\RR^n$ of diffeomorphisms from some
neighborhood of $0$ in $\RR^n$ into $M$ that send $0$ to $x$. Let
$V$ be a topological space, and let $\GL^r(n)$ act continuously on
$V$. A \emph{continuous geometric structure} of order $r$ and type
$V$ on $M$ is a continuous $\GL^r(n)$-equivariant map
$\sigma:F^r(M)\to V$. We refer to the induced continuous map
$\theta:M\to V/\GL^r(n)$ as the \emph{Gauss map} of $\sigma$, where
$V/\GL^r(n)$ is endowed with the quotient topology. We say that
$\sigma$ is $C^\varepsilon$ if $V$ is a $C^\varepsilon$
$\GL^r(n)$-manifold and $\sigma$ is a $C^\varepsilon$ map, and say
that $\sigma$ is a \emph{structure of algebraic type}
(\emph{$\A$-structure}, for short) if $V$ is a real algebraic
manifold and the $\GL^r(n)$-action on $V$ is regular. Whenever
$\sigma$ is $C^\varepsilon$, we say that it is \emph{unimodular} if
there is a preassigned $\GL^r(n)$-equivariant $C^\varepsilon$ map
$\delta:V\to(0,+\infty)$, where $\GL^r(n)$ acts on $(0,+\infty)$ by
$\alpha.t=|\det(\pi^r_1(\alpha))|t$, here $\alpha\in\GL^r(n)$,
$t\in(0,+\infty)$, and $\pi^r_1:\GL^r(n)\to\GL(n,\RR)$ is the
natural projection. The composition $\delta\circ\sigma$ induces a
$C^\varepsilon$ volume density $F^1(M)\to(0,+\infty)$, and hence a
smooth measure on $M$.

Now we assume that $\sigma:F^r(M)\to V$ is a $C^\varepsilon$
geometric structure. Let $s\ge0$ be an integer, and let $J_n^s(V)$
denote the space of $s$-jets at $0\in\RR^n$ of $C^\varepsilon$ maps
from some neighborhood of $0$ in $\RR^n$ into $V$. Then
$\GL^{r+s}(n)$ acts naturally on $J_n^s(V)$. The \emph{$s$-th
prolongation} $\sigma^s:F^{r+s}(M)\to J_n^s(V)$ of $\sigma$ is a
$C^\varepsilon$ geometric structure of order $r+s$ (see
\cite{CQ03,Fe}). If $\sigma$ is an $\A$-structure, then so is
$\sigma^s$.

If $f$ is a $C^\varepsilon$ map from a neighborhood of $x\in M$ to
some manifold or is the germ at $x$ of such a map, we denote by
$j_x^if$ the $i$-jet of $f$ at $x$, where $i\ge0$ is an integer. Let
$f$ be a $C^\varepsilon$ diffeomorphism from some open set $U\subset
M$ into $M$. We say that $f$ is a \emph{local $C^\varepsilon$
isometry} if $\sigma(j^r_xf\circ\beta)=\sigma(\beta)$ for all $x\in
U$ and $\beta\in F^r(M)_x$. If moreover $U=f(U)=M$, then $f$ is
called a \emph{$C^\varepsilon$ isometry}. Note that if $\sigma$ is
unimodular, then the induced smooth measure on $M$ is preserved by
the $C^\varepsilon$ isometry group $\Iso(M)$. For $i\ge r$ and $x\in
U$, if $\sigma^{i-r}(j^i_xf\circ\beta)=\sigma^{i-r}(\beta)$ for all
$\beta\in F^i(M)_x$, then $j^i_xf$ is called an \emph{infinitesimal
isometry} of order $i$. We denote by $\Iso^{\germ}(M)$ and
$\Iso^i(M)$ the groupoids of germs of local $C^\varepsilon$
isometries and infinitesimal isometries of order $i$ respectively,
denote by $\Iso^{\germ}_{x,y}(M)$ (resp. $\Iso^i_{x,y}(M)$) the
subset of $\Iso^{\germ}(M)$ (resp. $\Iso^i(M)$) consisting of
elements with source $x$ and target $y$, and denote
$\Iso^{\germ}_x(M)=\Iso^{\germ}_{x,x}(M)$,
$\Iso^i_x(M)=\Iso^i_{x,x}(M)$. Then we have natural maps
$\Iso^{\germ}_{x,y}(M)\to\Iso^{i+1}_{x,y}(M)\to\Iso^i_{x,y}(M)$. We
say that $x\in M$ is \emph{$k$-regular} if there exists an open
neighborhood $U_x$ of $x$ such that
$\Iso^{\germ}_{x,y}(M)\to\Iso^k_{x,y}(M)$ is surjective for every
$y\in U_x$.

If $v$ is a $C^\varepsilon$ local vector field on $M$ defined around
$x$, we denote the $i$-jet of $v$ at $x$ by $j_x^iv$. Let
$\Vect^i_x(M)$ denote the space of all such $i$-jets at $x$. A
(local) $C^\varepsilon$ vector field $v$ on $M$ is called a
(\emph{local}) \emph{$C^\varepsilon$ Killing field} if the local
flow generated by $v$ consists of local $C^\varepsilon$ isometries.
Note that there are natural maps
$\Kill(M)\to\Kill^{\germ}_x(M)\to\Vect^i_x(M)$, where $\Kill(M)$ and
$\Kill^{\germ}_x(M)$ are the Lie algebras of $C^\varepsilon$ Killing
fields and germs at $x$ of local $C^\varepsilon$ Killing fields
defined around $x$, respectively. We denote the kernel of
$\ev_x:\Kill^\germ_x(M)\to T_xM$ by $\Kill^\germ_x(M)_0$, which is a
subalgebra of $\Kill^\germ_x(M)$.

The $C^\varepsilon$ geometric structure $\sigma$ is \emph{$i$-rigid}
if $\Iso^{i+1}_x(M)\to\Iso^i_x(M)$ is injective for every $x\in M$,
and is \emph{rigid} if it is $i$-rigid for some $i\ge r$. Rigid
geometric structures have the following properties.

\begin{theorem}[Gromov]\label{T:Gromov1}
Let $M$ be a connected $C^\varepsilon$ manifold, and let $\sigma$ be
an $i$-rigid $C^\varepsilon$ geometric structure of order $r$ on
$M$, where $i\ge r>0$. Then we have
\begin{itemize}
\item[(1)] $\sigma$ is $i'$-rigid for any $i'\ge i$.
\item[(2)] For any $x\in M$, $\Iso^{\germ}_x(M)\to\Iso^{i}_x(M)$ is
injective.
\item[(3)] For any $x\in M$, $\Kill(M)\to\Kill^{\germ}_x(M)$ and
$\Kill^{\germ}_x(M)\to\Vect^{i}_x(M)$ are injective. In particular,
$\Kill(M)$ and $\Kill^{\germ}_x(M)$ are finite-dimensional.
\item[(4)] $\Iso(M)$ is a Lie group and its action on $M$ is
$C^\varepsilon$. Moreover, for any $x\in M$, $\Iso^{\germ}_x(M)$ is
a Lie group.
\item[(5)] There exist
an $\Iso^\germ(M)$-invariant open dense subset $M_\reg$ of $M$ and
an integer $k>i$ such that every point in $M_\reg$ is $k$-regular.
\end{itemize}
\end{theorem}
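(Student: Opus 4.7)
The plan is to work through the five claims in order, treating this theorem as the customary compendium of foundational facts from Gromov's theory of rigid geometric structures, with the detailed arguments available in \cite{Gr,Be,Fe,CQ03}. For (1) I would argue by induction on $i'-i$ via prolongation: the natural bijection between $(j+1)$-infinitesimal isometries of $\sigma$ and $j$-infinitesimal isometries of the prolongation $\sigma^1$ intertwines the truncation maps at consecutive jet levels, so $i$-rigidity of $\sigma$ translates into an injectivity statement for $\sigma^1$, which in turn upgrades to $(i+1)$-rigidity of $\sigma$; iterating yields the claim.

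Claims (2) and (3) would be handled jointly by a jet-identity argument that feeds on (1). If two germs of local isometries, or two germs of local Killing fields, share the same $i$-jet at $x$, then (1) forces all higher-order jets at $x$ to agree as well; analytic continuation in the case $\varepsilon=\omega$, or the fact that a local Killing field is determined along short curves through $x$ by its sufficiently high-order jet at $x$ via the Killing ODE when $\varepsilon=\infty$, then upgrades pointwise agreement to agreement on an entire neighborhood. Connectedness of $M$ globalizes this to injectivity of $\Kill(M)\to\Kill^\germ_x(M)$, while finite-dimensionality of $\Vect^i_x(M)$ supplies the remaining conclusions. For (4), the injectivity from (2) embeds $\Iso(M)$ and $\Iso^\germ_x(M)$ into the finite-dimensional algebraic jet space $\Iso^i(M)$ as closed subsets, and a Palais-type argument endows them with Lie group structures compatible with a $C^\varepsilon$ action, the smoothness of which follows from the $C^\varepsilon$ dependence of the jet identification on parameters together with the fact that one-parameter subgroups integrate Killing fields.

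The hard part will be (5). Here one must exhibit an $\Iso^\germ(M)$-invariant open dense subset $M_\reg$ on which every $k$-infinitesimal isometry, for $k$ chosen sufficiently larger than $i$, extends to a genuine germ of local isometry. The standard route is to analyze the algebraic subvarieties of the jet bundles $F^{i+s}(M)$ cut out by the Killing-jet equations, define $M_\reg$ as the locus where suitable rank functions associated with these subvarieties are locally constant and maximal, and then perform an implicit-function-theorem style reconstruction of a local isometry from a sufficiently high-order jet. Invariance under $\Iso^\germ(M)$ follows because local isometries conjugate the relevant rank functions, while openness and density come from semi-continuity of rank together with the finiteness of the possible values. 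Verifying these rank properties and making the reconstruction step precise is where the technical work is concentrated.
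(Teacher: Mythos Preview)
Your proposal is broadly on target and cites the right literature; the paper's own ``proof'' is almost entirely references to \cite{Gr,Be,CQ03,Fe,Ze00,Ze02}, with only part (3) argued in any detail. The one substantive difference is in how (2) and (3) are linked. The paper deduces (3) \emph{from} (2): given a local Killing field $v$ with $j^i_xv=0$, it passes to the local flow $f_t$, invokes \cite[Lem.~2.1]{CQ03} to obtain $j^i_xf_t=j^i_x\id_M$, and then applies (2) to conclude each $f_t$ is the identity germ, whence $\langle v\rangle_x=0$; injectivity of $\Kill(M)\to\Kill^\germ_x(M)$ then follows by showing the set $\{y\in M: j^i_yw=0\}$ is both open and closed. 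You instead attack (2) and (3) in parallel via ``all jets agree at $x$, then propagate''.

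Your route is legitimate, but watch one point in the $C^\infty$ case of (2): knowing from (1) that every jet of a local isometry $f$ at $x$ agrees with that of $\id_M$ does \emph{not} by itself force the germs to agree in the smooth category, and your ``Killing ODE'' clause is phrased only for vector fields, not for local diffeomorphisms. You still need a propagation step for isometries---either the open--closed argument on the set $\{y: j^i_yf=j^i_y\id_M\}$ using rigidity together with the implicit function theorem (as in \cite[1.5.A]{Gr}), or the parallel-section formalism of \cite{CQ03} that handles (2) and (3) uniformly. With that filled in, your outline for (1), (4), and (5) matches the standard arguments in the cited sources.
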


\begin{proof}
Detailed proof of (1) can be found in \cite{CQ03, Fe}. (2) is
implied from \cite[1.2.A and 1.5.A]{Gr}. We sketch a proof of (3)
using (2). If $v$ is a local Killing field defined around $x$ with
$j^i_xv=0$, and if $f_t$ is the local flow generated by $v$, then by
\cite[Lem. 2.1]{CQ03}, we have $j^i_xf_t=j^i_x\id_M$ for every $t$.
Thus (2) implies that $f_t=\id_M$ on some neighborhood $U_t$ of $x$.
Hence $\langle v\rangle_x=0$. This proves the injectivity of
$\Kill^{\germ}_x(M)\to\Vect^{i}_x(M)$. Now if $w\in\Kill(M)$ with
$\langle w\rangle_x=0$, then the set $Z=\{y\in M\mid j^i_yw=0\}$ is
closed and nonempty. By the injectivity of
$\Kill^{\germ}_y(M)\to\Vect^{i}_y(M)$, $Z$ is also open. So $Z=M$,
and hence $w=0$. This proves the injectivity of
$\Kill(M)\to\Kill^{\germ}_x(M)$. (4) is proved in \cite[Sect.
1.6.H]{Gr}. (5) is proved in \cite[Thm. 1.6.F]{Gr} and
\cite{Be,CQ03,Fe,Ze00,Ze02}.
\end{proof}

Note that $\LL(\Iso(M))$ can be identified with the Lie algebra of
complete Killing fields on $M$ via the induced infinitesimal action.
Under this identification, if $v\in\Kill(M)$ is complete, then
$t\mt\exp(-tv)$ is the flow generated by $v$. Similarly, for $x\in
M$, $\LL(\Iso^{\germ}_x(M))$ can be identified with
$\Kill^\germ_x(M)_0$ in such a way that for a local Killing field
$v$ which is defined around $x$ and vanishes at $x$, if $f_t$ is the
local flow generated by $v$, then $\exp(-t\langle
v\rangle_x)=\langle f_t\rangle_x$.

The next theorem holds only for analytic structures.

\begin{theorem}[Gromov]\label{T:Gromov2}
Let $M$ be a connected analytic manifold with an $i$-rigid analytic
$\A$-structure.
\begin{itemize}
\item[(1)] If $M$ is compact, then there exists
$k>i$ such that every point in $M$ is $k$-regular.
\item[(2)] If $M$ is simply connected, then for any $x\in M$,
$\Kill(M)\to\Kill^{\germ}_x(M)$ is bijective.
\end{itemize}
\end{theorem}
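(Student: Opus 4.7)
The plan is to prove the two parts separately, both exploiting the real-analytic category beyond what the smooth version of Theorem \ref{T:Gromov1} already yields.

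For Part (1), I would start from Theorem \ref{T:Gromov1}(5): there is an $\Iso^\germ(M)$-invariant open dense $M_\reg\subset M$ and some $k_0>i$ such that every point in $M_\reg$ is $k_0$-regular. The key idea is that in the analytic setting, failure of $k$-regularity is an analytic condition. For each $k\ge k_0$, let $B_k$ denote the set of points that fail to be $k$-regular. The sets $B_k$ are closed, they are decreasing in $k$ (a germ lifting a $(k+1)$-jet also lifts the underlying $k$-jet), and they can be cut out by the vanishing of obstruction jets that themselves live in real-analytic quotients of jet bundles built from $\sigma$ and its prolongations. Since $M$ is a compact real-analytic manifold, any descending chain of real-analytic subvarieties stabilizes, yielding $k^\ast\ge k_0$ with $B_k=B_{k^\ast}$ for $k\ge k^\ast$. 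Combining $B_{k^\ast}\cap M_\reg=\emptyset$ with openness of the $k^\ast$-regular locus and the analyticity of $B_{k^\ast}$ forces $B_{k^\ast}=\emptyset$, so every point of $M$ is $k^\ast$-regular.

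For Part (2), injectivity of $\Kill(M)\to\Kill^\germ_x(M)$ is already Theorem \ref{T:Gromov1}(3), so the content is surjectivity. Given $\langle v\rangle_x\in\Kill^\germ_x(M)$, I would represent it by a local analytic Killing field $v$ defined on a neighborhood $U$ of $x$; analyticity is automatic because the jets of a Killing field for a rigid analytic $\A$-structure satisfy an analytic system cut out by the prolongations of $\sigma$. I would then extend $v$ along any continuous path $\gamma:[0,1]\to M$ from $x$ by a standard analytic-continuation scheme: choose a chain of open sets $U=U_0,U_1,\dots,U_N$ with $U_{j-1}\cap U_j\ne\emptyset$ covering $\gamma$, at each step extend the germ across the overlap using local existence of Killing fields at $k$-regular points (provided by Theorem \ref{T:Gromov2}(1) applied to suitable analytic charts, or directly by the regularity set-up of Section \ref{S:review}), and appeal to Theorem \ref{T:Gromov1}(3) for uniqueness on each overlap. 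Two such extensions along homotopic paths must agree: subdividing a homotopy $H:[0,1]^2\to M$ finely enough, each plaquette lies in an open set where uniqueness of extension applies, and one propagates agreement across the subdivision. Since $M$ is simply connected, every two paths from $x$ to $y$ are homotopic, so the extension is path-independent and produces a genuine $\widetilde v\in\Kill(M)$ with $\langle\widetilde v\rangle_x=\langle v\rangle_x$.

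The main obstacle in Part (1) is verifying that the non-regular locus $B_k$ is genuinely real-analytic rather than merely closed, and that stabilization forces it to be empty; this requires a careful description of the obstructions to lifting $k$-jets of infinitesimal isometries to germs, realized as analytic conditions inside the prolonged jet bundles. The main obstacle in Part (2) is the monodromy step, i.e.\ showing that two chains of analytic continuations along homotopic paths produce the same germ at the endpoint; this rests on the uniqueness clause of Theorem \ref{T:Gromov1}(3) applied to each plaquette of a sufficiently fine subdivision of the homotopy, together with a Lebesgue-number argument guaranteeing that each plaquette fits inside a chart on which a unique local Killing-field extension is available.
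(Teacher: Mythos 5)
Your plan is sound in broad outline---the paper itself simply cites Gromov for both parts, so there is no internal proof to compare against---but there is a genuine gap at the crux of Part (1), and a logical snag in Part (2).

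For Part (1), the final step does not go through as stated. You argue that the stabilized bad set $B_{k^\ast}$ is a closed analytic subvariety, is nowhere dense (being disjoint from the open dense $M_\reg$), and then conclude that it is empty. But a proper closed real-analytic subvariety of a connected analytic manifold is always nowhere dense without being empty (a point of $S^1$, a curve in a surface, etc.), so nowhere density plus analyticity gives you nothing. What the argument actually needs, and what the analytic category provides, is the pointwise fact that \emph{every} point of $M$ is $k$-regular for some $k$ depending on the point: this is where analyticity enters essentially, via the convergence of formal infinitesimal isometries to genuine local isometries (a Cartan--K\"{a}hler/Frobenius-type integrability statement, using that the descending chain $\Iso^{i}_x\supset\Iso^{i+1}_x\supset\cdots$ of jet groups stabilizes by rigidity and that formal Killing jets integrate when the structure is analytic). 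Once you know $\bigcap_k B_k=\emptyset$, the $B_k$ being closed and decreasing on a \emph{compact} $M$ forces $B_{k^\ast}=\emptyset$ for some finite $k^\ast$; the descending-chain/analyticity machinery is then only needed, if at all, to see that the $B_k$ are closed, and even that requires justification, since ``failure of $k$-regularity'' involves existence of a germ solving a system of PDE, which is not obviously cut out by analytic equations on finite jets.

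For Part (2), the Nomizu-style continuation-plus-monodromy scheme is the right idea, but your appeal to ``Theorem \ref{T:Gromov2}(1) applied to suitable analytic charts'' is both circular (it is part of the very theorem you are proving) and inapplicable (Part (2) makes no compactness hypothesis, while Part (1) requires it). The local continuation step must instead be sourced from the analytic-category fact that $\dim\Kill^\germ_y(M)$ is locally constant in $y$ and that the restriction maps $\Kill^\germ_y(M)\to\Kill^\germ_{y'}(M)$ for nearby $y,y'$ are isomorphisms; this is what lets you propagate a germ of a Killing field across an overlap, and it is a genuinely analytic phenomenon (it fails for merely smooth rigid structures). Once that local extension/uniqueness package is in hand, the monodromy argument you describe---fine subdivision of a homotopy, propagation of agreement across plaquettes, appeal to the injectivity in Theorem \ref{T:Gromov1}(3)---is correct and finishes the proof.
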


\begin{proof}
(1) is proved in \cite[Sect. 1.7.B]{Gr}, and (2) is proved in
\cite[Sect. 1.7]{Gr}.
\end{proof}

Finally, we remark that a geometric structure $\sigma:F^r(M)\to V$
naturally induces a geometric structure
$\tilde{\sigma}:F^r(\widetilde{M})\to V$ on $\widetilde{M}$ by
$\tilde{\sigma}(\tilde{\beta})=\sigma(j^r_{\tilde{x}}\pi\circ\tilde{\beta})$,
where $\tilde{x}\in\widetilde{M}$ and $\tilde{\beta}\in
F^r(\widetilde{M})_{\tilde{x}}$. If $\sigma$ is a rigid
$C^\varepsilon$ $\A$-structure, then so is $\tilde{\sigma}$. If a
connected Lie group $G$ acts on $M$ by $C^\varepsilon$ isometries,
then the induced $C^\varepsilon$ action of $\widetilde{G}$ on
$\widetilde{M}$ is also isometric.

\section{Zariski hulls of fundamental groups}\label{S:Zariski}

Let $M$ be a connected $C^\varepsilon$ manifold, and let
$\Gamma=\pi_1(M)$. We denote by $\Diff^\germ(M)$ the groupoid of
germs of local $C^\varepsilon$ diffeomorphisms of $M$, by
$\Diff_{x,y}^\germ(M)$ the subset of $\Diff^\germ(M)$ consisting of
germs with source $x$ and target $y$, by $\Vect(M)$ the Lie algebra
of $C^\varepsilon$ vector fields on $M$, and by $\Vect_x^\germ(M)$
the Lie algebra of germs at $x$ of local $C^\varepsilon$ vector
fields defined around $x$. For $\tilde{x}\in\widetilde{M}$, we
denote by
\begin{equation}\label{E:lambda}
\lambda_{\tilde{x}}:\Vect(\widetilde{M})\to\Vect^\germ_x(M)
\end{equation}
the composition of the natural homomorphism
$\Vect(\widetilde{M})\to\Vect^\germ_{\tilde{x}}(\widetilde{M})$ and
the natural isomorphism
$\Vect^\germ_{\tilde{x}}(\widetilde{M})\to\Vect^\germ_x(M)$, where
$x=\pi(\tilde{x})$. In other words, for $w\in \Vect(\widetilde{M})$,
we have $\lambda_{\tilde{x}}(w)=d\pi(\langle w\rangle_{\tilde{x}})$.
The deck transformations induce a representation of $\Gamma$ in
$\Vect(\widetilde{M})$ by taking differentials. If
$\cW\subset\Vect(\widetilde{M})$ is a finite-dimensional
$\Gamma$-invariant subspace, we denote the restricted representation
in $\cW$ by $\rho_\cW:\Gamma\to\GL(\cW)$.

\begin{definition}\label{D:hull}
Let $\cW$ be a finite-dimensional $\Gamma$-invariant subspace of
$\Vect(\widetilde{M})$.
\begin{itemize}
\item[(1)] Let $x,y\in M$, $\sA_{x,y}\subset\Diff_{x,y}^\germ(M)$. Choose $\tilde{x}\in\pi^{-1}(x)$,
$\tilde{y}\in\pi^{-1}(y)$. The \emph{Zariski hull of $\Gamma$ in
$\sA_{x,y}$ relative to $\cW$} is defined as
\begin{align*}
\Hull_\Gamma^\cW(\sA_{x,y})=\{&\varphi\in\sA_{x,y}\mid\text{there
exists }
A\in\overline{\overline{\rho_\cW(\Gamma)}} \text{ such that }\\
&d\varphi\circ\lambda_{\tilde{x}}|_{\cW}=\lambda_{\tilde{y}}\circ
A\}.
\end{align*}
\item[(2)] Let $\sA$ be a subgroupoid of $\Diff^\germ(M)$. The
\emph{Zariski hull of $\Gamma$ in $\sA$ relative to $\cW$} is
defined as
$$\Hull_\Gamma^\cW(\sA)=\bigcup_{x,y\in
M}\Hull_\Gamma^\cW(\sA\cap\Diff_{x,y}^\germ(M)).$$
\item[(3)] Let $x\in M$, $\La_x<\Vect_x^\germ(M)$. Choose $\tilde{x}\in\pi^{-1}(x)$. The
\emph{Zariski hull of $\Gamma$ in $\La_x$ relative to $\cW$} is
defined as
\begin{align*}
\Hull_\Gamma^\cW(\La_x)=\{&\eta\in\La_x\mid\text{there exists }
B\in\LL\left(\overline{\overline{\rho_\cW(\Gamma)}}\right) \text{
such that }\\
&\ad(\eta)\circ\lambda_{\tilde{x}}|_{\cW}=\lambda_{\tilde{x}}\circ
B\},
\end{align*}
where we view $\ad(\eta):\Vect_x^\germ(M)\to\Vect_x^\germ(M)$.
\end{itemize}
\end{definition}

It is obvious that if $\cW=0$, then $\Hull_\Gamma^\cW(\sA)=\sA$ and
$\Hull_\Gamma^\cW(\La_x)=\La_x$. But to justify Definition
\ref{D:hull} for general $\cW$, we need the following lemma.

\begin{lemma}\label{L:lambda}
For $\tilde{x}\in\widetilde{M}$ and $\gamma\in\Gamma$, we have
$\lambda_{\gamma\tilde{x}}=\lambda_{\tilde{x}}\circ d\gamma^{-1}$.
\end{lemma}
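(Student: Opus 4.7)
The plan is to unwind the definition of $\lambda_{\tilde{x}}$ from \eqref{E:lambda} and exploit the identity $\pi\circ\gamma=\pi$ characterizing deck transformations. Both sides of the claimed equality are linear maps $\Vect(\widetilde{M})\to\Vect^\germ_{x}(M)$, where $x=\pi(\tilde{x})=\pi(\gamma\tilde{x})$, so it suffices to verify them on an arbitrary global vector field $w\in\Vect(\widetilde{M})$.

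The heart of the argument is a local factorization of the covering map. I would choose a neighborhood $U$ of $\tilde{x}$ on which $\pi|_U$ is a diffeomorphism onto a neighborhood of $x$; then $\gamma U$ is a neighborhood of $\gamma\tilde{x}$, $\pi|_{\gamma U}$ is also a diffeomorphism, and $\pi\circ\gamma=\pi$ yields $\pi|_{\gamma U}=\pi|_U\circ\gamma^{-1}|_{\gamma U}$. Transporting the germ $\langle w\rangle_{\gamma\tilde{x}}$ through $d\pi$ by means of this factorization, the intermediate diffeomorphism $\gamma^{-1}$ carries it to the germ at $\tilde{x}$ of the globally pushed-forward vector field $(\gamma^{-1})_*w$, and then $\pi|_U$ sends this to a germ at $x$; in symbols, $d\pi(\langle w\rangle_{\gamma\tilde{x}})=d\pi(\langle(\gamma^{-1})_*w\rangle_{\tilde{x}})$.

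Finally, since the $\Gamma$-representation on $\Vect(\widetilde{M})$ is by definition the push-forward action, $d\gamma^{-1}(w)=(\gamma^{-1})_*w$ as elements of $\Vect(\widetilde{M})$, so their germs at $\tilde{x}$ coincide; substituting this identification collapses the right-hand side to $\lambda_{\tilde{x}}(d\gamma^{-1}(w))$, which is exactly the claim. The whole argument is the chain rule for germs dressed up with cover-theoretic bookkeeping; the only point requiring care is to keep straight the two meanings of ``$d\gamma^{-1}$'' --- as an operator sending germs at $\gamma\tilde{x}$ to germs at $\tilde{x}$, versus the element of $\GL(\Vect(\widetilde{M}))$ appearing in the statement --- and to confirm that these coincide after germification at $\tilde{x}$. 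I do not foresee any genuine obstacle beyond this convention check.
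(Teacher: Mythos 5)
Your proposal is correct and follows essentially the same route as the paper's proof: both rest on the deck-transformation identity $\pi\circ\gamma^{-1}=\pi$, push the germ $\langle w\rangle_{\gamma\tilde{x}}$ through $d\gamma^{-1}$ to land at $\tilde{x}$, and then identify the result with the germ of $d\gamma^{-1}(w)$ before applying $d\pi$. The paper compresses this into a one-line chain of equalities, while you spell out the local factorization of $\pi$ and flag the identification of the two uses of $d\gamma^{-1}$, but the underlying argument is identical.
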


\begin{proof}
Since $\pi=\pi\circ\gamma^{-1}$, we have
$$
\lambda_{\gamma\tilde{x}}(w)=d\pi(\langle
w\rangle_{\gamma\tilde{x}})=d\pi(d\gamma^{-1}(\langle
w\rangle_{\gamma\tilde{x}}))=d\pi(\langle
d\gamma^{-1}(w)\rangle_{\tilde{x}})=\lambda_{\tilde{x}}(d\gamma^{-1}(w))
$$
for all $w\in\Vect(\widetilde{M})$.
\end{proof}

\begin{lemma}\label{L:hull0}
Under the situation of Definition \ref{D:hull}, we have
\begin{itemize}
\item[(1)] $\Hull_\Gamma^\cW(\sA_{x,y})$ is
independent of the choices of $\tilde{x}$ and $\tilde{y}$, and hence
$\Hull_\Gamma^\cW(\sA)$ is well-defined.
\item[(2)] $\Hull_\Gamma^\cW(\La_x)$ is
independent of the choice of $\tilde{x}$.
\end{itemize}
\end{lemma}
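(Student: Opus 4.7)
The plan is to reduce both statements to the observation that any two lifts of a point of $M$ to $\widetilde{M}$ differ by a deck transformation, and then to transfer the defining equations via Lemma \ref{L:lambda}. Since $\cW$ is $\Gamma$-invariant, the restriction of $d\gamma$ to $\cW$ agrees with $\rho_\cW(\gamma)$, so Lemma \ref{L:lambda} specializes to the key identity
\[
\lambda_{\gamma\tilde{x}}|_\cW = \lambda_{\tilde{x}}|_\cW \circ \rho_\cW(\gamma^{-1}) \qquad (\gamma \in \Gamma,\ \tilde{x} \in \widetilde{M}).
\]
Using this, both parts reduce to verifying that changing the chosen lifts transforms the parameter ($A$ in part (1), $B$ in part (2)) in a way that keeps it inside $\overline{\overline{\rho_\cW(\Gamma)}}$, respectively inside its Lie algebra.

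For part (1), suppose $\varphi \in \sA_{x,y}$ satisfies the defining relation with parameter $A$ for a choice of lifts $\tilde{x}, \tilde{y}$, and consider an alternative pair $\gamma_1\tilde{x}, \gamma_2\tilde{y}$ with $\gamma_i \in \Gamma$. A direct computation using the key identity shows that the relation holds again with the new lifts provided one takes $A' = \rho_\cW(\gamma_2) \circ A \circ \rho_\cW(\gamma_1^{-1})$. Since $\overline{\overline{\rho_\cW(\Gamma)}}$ is an algebraic subgroup of $\GL(\cW)$ containing $\rho_\cW(\Gamma)$, it is closed under left and right multiplication by such factors, so $A' \in \overline{\overline{\rho_\cW(\Gamma)}}$. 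The construction is symmetric in the two choices of lifts, which gives part (1), and hence well-definedness of $\Hull_\Gamma^\cW(\sA)$.

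Part (2) proceeds in complete analogy. Given $\eta \in \La_x$ satisfying the infinitesimal relation $\ad(\eta) \circ \lambda_{\tilde{x}}|_\cW = \lambda_{\tilde{x}} \circ B$ at the lift $\tilde{x}$, and an alternative lift $\gamma\tilde{x}$, the key identity forces (and is satisfied by) the choice $B' = \Ad(\rho_\cW(\gamma))(B) = \rho_\cW(\gamma) \circ B \circ \rho_\cW(\gamma)^{-1}$. The only non-trivial point is that $B' \in \LL(\overline{\overline{\rho_\cW(\Gamma)}})$; this holds because the Lie algebra of a linear algebraic group is invariant under $\Ad$ by elements of the group itself, and $\rho_\cW(\gamma)$ lies in $\overline{\overline{\rho_\cW(\Gamma)}}$. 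There is no substantive obstacle; the main care required is to keep track of inverses and the order of composition when pushing the identity of Lemma \ref{L:lambda} through the defining equations on both sides.
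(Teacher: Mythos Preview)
Your proof is correct and follows essentially the same approach as the paper: both use Lemma \ref{L:lambda} to show that changing lifts by $\gamma_1,\gamma_2\in\Gamma$ transforms $A$ into $\rho_\cW(\gamma_2)A\rho_\cW(\gamma_1)^{-1}$ in part (1) and $B$ into $\rho_\cW(\gamma)B\rho_\cW(\gamma)^{-1}$ in part (2), and then observe that these remain in $\overline{\overline{\rho_\cW(\Gamma)}}$ and its Lie algebra respectively.
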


\begin{proof}
(1) If $\tilde{x}'\in\pi^{-1}(x)$ and $\tilde{y}'\in\pi^{-1}(y)$,
then there exist $\gamma_1,\gamma_2\in\Gamma$ such that
$\tilde{x}'=\gamma_1\tilde{x}$ and $\tilde{y}'=\gamma_2\tilde{y}$.
Let $\varphi\in\sA_{x,y}$. Suppose that there exists
$A\in\overline{\overline{\rho_\cW(\Gamma)}}$ such that
$d\varphi\circ\lambda_{\tilde{x}}|_{\cW}=\lambda_{\tilde{y}}\circ
A$. Then by Lemma \ref{L:lambda}, we have
\begin{align*}
d\varphi\circ\lambda_{\tilde{x}'}|_{\cW}&=d\varphi\circ\lambda_{\tilde{x}}\circ\rho_\cW(\gamma_1)^{-1}
=\lambda_{\tilde{y}}\circ
A\rho_\cW(\gamma_1)^{-1}\\
&=\lambda_{\tilde{y}'}\circ\rho_\cW(\gamma_2)A\rho_\cW(\gamma_1)^{-1}.
\end{align*}
Since
$\rho_\cW(\gamma_2)A\rho_\cW(\gamma_1)^{-1}\in\overline{\overline{\rho_\cW(\Gamma)}}$,
this proves (1).

(2) Let $\gamma\in\Gamma$ and $\eta\in\La_x$. If there exists
$B\in\LL\left(\overline{\overline{\rho_\cW(\Gamma)}}\right)$ such
that
$\ad(\eta)\circ\lambda_{\tilde{x}}|_{\cW}=\lambda_{\tilde{x}}\circ
B$, then by Lemma \ref{L:lambda}, we have
\begin{align*}
\ad(\eta)\circ\lambda_{\gamma\tilde{x}}|_{\cW}&=\ad(\eta)\circ\lambda_{\tilde{x}}\circ\rho_\cW(\gamma)^{-1}
=\lambda_{\tilde{x}}\circ
B\rho_\cW(\gamma)^{-1}\\
&=\lambda_{\gamma\tilde{x}}\circ\rho_\cW(\gamma)B\rho_\cW(\gamma)^{-1}.
\end{align*}
Since
$\rho_\cW(\gamma)B\rho_\cW(\gamma)^{-1}\in\LL\left(\overline{\overline{\rho_\cW(\Gamma)}}\right)$,
this proves (2).
\end{proof}

\begin{lemma}\label{L:hull}
\begin{itemize}
\item[(1)] Under the situation of Definition \ref{D:hull}(2), $\Hull_\Gamma^\cW(\sA)$ is a subgroupoid of $\sA$.
\item[(2)] Under the situation of Definition \ref{D:hull}(3), $\Hull_\Gamma^\cW(\La_x)$ is
a subalgebra of $\La_x$.
\end{itemize}
\end{lemma}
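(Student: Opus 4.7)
The plan is to verify the groupoid and subalgebra axioms directly from the defining relations in Definition~\ref{D:hull}, using that $\overline{\overline{\rho_\cW(\Gamma)}}$ is a group and $\LL\bigl(\overline{\overline{\rho_\cW(\Gamma)}}\bigr)$ is a Lie algebra. The key point that makes the computations go through is that the ``witness'' operators $A$ and $B$ live on $\cW$ itself, so their images stay in $\cW$ and the relations $d\varphi\circ\lambda_{\tilde{x}}|_\cW=\lambda_{\tilde{y}}\circ A$, $\ad(\eta)\circ\lambda_{\tilde{x}}|_\cW=\lambda_{\tilde{x}}\circ B$ can be iterated.

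For part~(1), I will check three things. First, for every $x\in M$ the identity germ $\langle\id_M\rangle_x$ lies in $\Hull_\Gamma^\cW(\sA)$, with witness $A=I$, which belongs to $\overline{\overline{\rho_\cW(\Gamma)}}$ as the group identity. Next, if $\varphi\in\Hull_\Gamma^\cW(\sA_{x,y})$ with witness $A$, then precomposing $d\varphi\circ\lambda_{\tilde{x}}|_\cW=\lambda_{\tilde{y}}\circ A$ with $A^{-1}$ and applying $d\varphi^{-1}$ yields $d\varphi^{-1}\circ\lambda_{\tilde{y}}|_\cW=\lambda_{\tilde{x}}\circ A^{-1}$, and $A^{-1}\in\overline{\overline{\rho_\cW(\Gamma)}}$ since the Zariski closure of a subgroup is a group. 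Finally, for composability, if $\varphi_1\in\Hull_\Gamma^\cW(\sA_{x,y})$ with witness $A_1$ (using lifts $\tilde{x},\tilde{y}$) and $\varphi_2\in\Hull_\Gamma^\cW(\sA_{y,z})$ with witness $A_2$ (using lifts $\tilde{y},\tilde{z}$, permissible by Lemma~\ref{L:hull0}(1)), then for $w\in\cW$ we have $A_1 w\in\cW$, so
\[
d(\varphi_2\circ\varphi_1)(\lambda_{\tilde{x}}(w))
=d\varphi_2(\lambda_{\tilde{y}}(A_1 w))
=\lambda_{\tilde{z}}(A_2 A_1 w),
\]
exhibiting $A_2A_1\in\overline{\overline{\rho_\cW(\Gamma)}}$ as a witness.

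For part~(2), $0\in\Hull_\Gamma^\cW(\La_x)$ trivially with witness $B=0$. For linearity, if $\eta_1,\eta_2$ have witnesses $B_1,B_2$ and $c_1,c_2\in\RR$, then $c_1B_1+c_2B_2\in\LL\bigl(\overline{\overline{\rho_\cW(\Gamma)}}\bigr)$ (it is a vector subspace of $\Lgl(\cW)$) and serves as a witness for $c_1\eta_1+c_2\eta_2$. For the bracket, I compute for $w\in\cW$:
\[
\ad(\eta_1)\ad(\eta_2)(\lambda_{\tilde{x}}(w))
=\ad(\eta_1)(\lambda_{\tilde{x}}(B_2 w))
=\lambda_{\tilde{x}}(B_1 B_2 w),
\]
where the second equality uses $B_2 w\in\cW$ to re-apply the defining relation for $\eta_1$. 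Subtracting the symmetric expression gives
\[
\ad([\eta_1,\eta_2])\circ\lambda_{\tilde{x}}|_\cW
=\lambda_{\tilde{x}}\circ[B_1,B_2],
\]
and $[B_1,B_2]\in\LL\bigl(\overline{\overline{\rho_\cW(\Gamma)}}\bigr)$ since Lie algebras are closed under bracket.

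There is no real obstacle here; the mildly delicate point is the bracket step in~(2), where one must notice that the relation $\ad(\eta_i)\circ\lambda_{\tilde{x}}=\lambda_{\tilde{x}}\circ B_i$ is a priori only asserted on $\cW$, and the iteration $\ad(\eta_1)\ad(\eta_2)\circ\lambda_{\tilde{x}}|_\cW$ is legitimate precisely because $B_2$ maps $\cW$ into $\cW$. Analogously, the composition step in~(1) relies on $A_1$ stabilizing $\cW$. No further ingredients beyond Lemma~\ref{L:hull0} and the group/Lie-algebra structure of the Zariski closure are needed.
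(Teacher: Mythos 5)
Your proof is correct and takes essentially the same approach as the paper: in both, one iterates the witness relations from Definition~\ref{D:hull}, using that $\overline{\overline{\rho_\cW(\Gamma)}}$ is a group (resp.\ $\LL\bigl(\overline{\overline{\rho_\cW(\Gamma)}}\bigr)$ a Lie algebra) so that $A_2A_1$, $A^{-1}$, and $[B_1,B_2]$ remain witnesses. The only cosmetic difference is that the paper condenses part (1) to the single criterion $\varphi_2\circ\varphi_1^{-1}\in\Hull$ and leaves the (trivial) linearity check in part (2) implicit, whereas you spell these out.
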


\begin{proof}
(1) It suffices to prove that if
$\varphi_1\in\Hull_\Gamma^\cW(\sA_{x,y})$ and
$\varphi_2\in\Hull_\Gamma^\cW(\sA_{x,z})$, then
$\varphi_2\circ\varphi_1^{-1}\in\Hull_\Gamma^\cW(\sA_{y,z})$. Let
$\tilde{x}\in\pi^{-1}(x)$, $\tilde{y}\in\pi^{-1}(y)$,
$\tilde{z}\in\pi^{-1}(z)$, and let
$A_1,A_2\in\overline{\overline{\rho_\cW(\Gamma)}}$ be such that
$d\varphi_1\circ\lambda_{\tilde{x}}|_{\cW}=\lambda_{\tilde{y}}\circ
A_1$ and
$d\varphi_2\circ\lambda_{\tilde{x}}|_{\cW}=\lambda_{\tilde{z}}\circ
A_2$. Then
$$d(\varphi_2\circ\varphi_1^{-1})\circ\lambda_{\tilde{y}}|_{\cW}
=d\varphi_2\circ\lambda_{\tilde{x}}\circ A_1^{-1}=\lambda_{\tilde{z}}\circ
A_2A_1^{-1}.$$ Since
$A_2A_1^{-1}\in\overline{\overline{\rho_\cW(\Gamma)}}$, we have
$\varphi_2\circ\varphi_1^{-1}\in\Hull_\Gamma^\cW(\sA_{y,z})$.

(2) Let $\eta_i\in\Hull_\Gamma^\cW(\La_x)$ $(i=1,2)$,
$\tilde{x}\in\pi^{-1}(x)$, and let
$B_i\in\LL\left(\overline{\overline{\rho_\cW(\Gamma)}}\right)$ be
such that
$\ad(\eta_i)\circ\lambda_{\tilde{x}}|_{\cW}=\lambda_{\tilde{x}}\circ
B_i$. Then
\begin{align*}
\ad([\eta_1,\eta_2])\circ\lambda_{\tilde{x}}|_{\cW}
&=\ad(\eta_1)\circ\ad(\eta_2)\circ\lambda_{\tilde{x}}|_{\cW}-\ad(\eta_2)\circ\ad(\eta_1)\circ\lambda_{\tilde{x}}|_{\cW}\\
&=\ad(\eta_1)\circ\lambda_{\tilde{x}}\circ
B_2-\ad(\eta_2)\circ\lambda_{\tilde{x}}\circ B_1\\
&=\lambda_{\tilde{x}}\circ B_1B_2-\lambda_{\tilde{x}}\circ B_2B_1\\
&=\lambda_{\tilde{x}}\circ [B_1,B_2].
\end{align*}
Thus $[\eta_1,\eta_2]\in\Hull_\Gamma^\cW(\La_x)$.
\end{proof}

Now we assume that $M$ is endowed with a rigid $C^\varepsilon$
geometric structure and $\cW\subset\Kill(\widetilde{M})$. For $x\in
M$ and $\La_x<\Kill_x^\germ(M)$, we denote
\begin{equation}\label{E:a_x^W}
\La_x^\cW=\{\eta\in\La_x\mid[\eta,\lambda_{\tilde{x}}(\cW)]=0\},
\end{equation}
where $\tilde{x}\in\pi^{-1}(x)$. Note that by Lemma \ref{L:lambda},
$\lambda_{\tilde{x}}(\cW)$ is independent of the choice of
$\tilde{x}$. The following lemma gives a direct relation between
$\Hull_\Gamma^\cW(\La_x)$ and
$\overline{\overline{\rho_\cW(\Gamma)}}$.

\begin{lemma}\label{L:exact}
Let $M$ be a connected $C^\varepsilon$ manifold with a rigid
$C^\varepsilon$ geometric structure, $\cW$ be a $\Gamma$-invariant
subspace of $\Kill(\widetilde{M})$, $x\in M$, and
$\La_x<\Kill_x^\germ(M)$. Then there exists an exact sequence
$$0\to
\La_x^\cW\hookrightarrow\Hull_\Gamma^\cW(\La_x)\to\LL\left(\overline{\overline{\rho_\cW(\Gamma)}}\right)$$
of Lie algebra homomorphisms.
\end{lemma}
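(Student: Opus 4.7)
The plan is to construct the second arrow in the sequence explicitly as the map $\Phi\colon\Hull_\Gamma^\cW(\La_x)\to\LL\left(\overline{\overline{\rho_\cW(\Gamma)}}\right)$ sending $\eta$ to the element $B$ appearing in Definition \ref{D:hull}(3), and then verify that this is a well-defined Lie algebra homomorphism whose kernel is precisely $\La_x^\cW$.

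The first step, which is the only delicate one, is to show that $B$ is uniquely determined by $\eta$. Fix $\tilde{x}\in\pi^{-1}(x)$. Since $\sigma$ is rigid and $\cW\subset\Kill(\widetilde{M})$, Theorem \ref{T:Gromov1}(3) implies that the restriction $\lambda_{\tilde{x}}|_\cW\colon\cW\to\Kill^\germ_x(M)$ is injective (it factors the injection $\Kill(\widetilde{M})\to\Kill^\germ_{\tilde{x}}(\widetilde{M})$ through the canonical isomorphism with $\Kill^\germ_x(M)$). Consequently, two elements $B,B'\in\End(\cW)$ satisfying $\lambda_{\tilde{x}}\circ B=\lambda_{\tilde{x}}\circ B'$ must agree, so $\Phi(\eta):=B$ is well-defined on $\Hull_\Gamma^\cW(\La_x)$.

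Next I would check that $\Phi$ is linear and respects brackets. Linearity is immediate since $\eta\mapsto\ad(\eta)\circ\lambda_{\tilde{x}}|_\cW$ is linear and $\lambda_{\tilde{x}}$ is injective on $\cW$. For the bracket, the computation already carried out in the proof of Lemma \ref{L:hull}(2) shows that if $\Phi(\eta_i)=B_i$ then
\begin{equation*}
\ad([\eta_1,\eta_2])\circ\lambda_{\tilde{x}}|_\cW=\lambda_{\tilde{x}}\circ[B_1,B_2],
\end{equation*}
so uniqueness gives $\Phi([\eta_1,\eta_2])=[B_1,B_2]=[\Phi(\eta_1),\Phi(\eta_2)]$. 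Thus $\Phi$ is a Lie algebra homomorphism into $\LL\left(\overline{\overline{\rho_\cW(\Gamma)}}\right)$ (the target is a Lie algebra of endomorphisms of $\cW$, as required).

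Finally I would identify the kernel. By definition, $\eta\in\ker\Phi$ iff $\lambda_{\tilde{x}}\circ B=0$ for $B=\Phi(\eta)$, equivalently (by injectivity) $B=0$, equivalently $\ad(\eta)\circ\lambda_{\tilde{x}}|_\cW=0$, i.e.\ $[\eta,\lambda_{\tilde{x}}(\cW)]=0$. Recalling (\ref{E:a_x^W}), this says exactly $\eta\in\La_x^\cW$. Conversely, every $\eta\in\La_x^\cW$ automatically lies in $\Hull_\Gamma^\cW(\La_x)$, since one can take $B=0\in\LL\left(\overline{\overline{\rho_\cW(\Gamma)}}\right)$ in Definition \ref{D:hull}(3). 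This gives the exact sequence. The only real obstacle is the injectivity of $\lambda_{\tilde{x}}|_\cW$, which is supplied by rigidity, and once that is in hand the remainder is a purely formal verification.
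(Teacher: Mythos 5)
Your proposal is correct and follows essentially the same route as the paper's proof: both rely on the injectivity of $\lambda_{\tilde{x}}|_\cW$ coming from Theorem \ref{T:Gromov1}(3), and both realize the second map as (the restriction of) $\eta\mapsto\lambda_{\tilde{x}}'^{-1}\circ\ad(\eta)\circ\lambda_{\tilde{x}}'$. The only cosmetic difference is that the paper first defines this conjugation on the larger algebra $\La'_x=\{\eta\in\La_x\mid\ad(\eta)(\cW_x)\subset\cW_x\}$ (where it is automatically a Lie algebra homomorphism, being a conjugate of $\ad$) and then restricts, whereas you define $\Phi$ directly on the Hull and re-verify bracket compatibility by citing the computation in Lemma \ref{L:hull}(2).
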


\begin{proof}
Let $\tilde{x}\in\pi^{-1}(x)$, and denote
$\cW_x=\lambda_{\tilde{x}}(\cW)$. By Theorem \ref{T:Gromov1}(3), the
restriction of $\lambda_{\tilde{x}}$ to $\cW$ is injective. Hence it
induces an isomorphism $\lambda_{\tilde{x}}':\cW\to\cW_x$. Let
$\La'_x=\{\eta\in\La_x\mid\ad(\eta)(\cW_x)\subset\cW_x\}$. For
$\eta\in\La'_x$, we denote
$\delta(\eta)=\lambda_{\tilde{x}}'^{-1}\circ\ad(\eta)\circ\lambda_{\tilde{x}}'$.
Then $\delta:\La'_x\to\Lgl(\cW)$ is a representation and
$\ker(\delta)=\La_x^\cW$. It is easy to see that
$\Hull_\Gamma^\cW(\La_x)=\delta^{-1}\left(\LL\left(\overline{\overline{\rho_\cW(\Gamma)}}\right)\right)$.
Thus $\delta$ restricts to a homomorphism
$\Hull_\Gamma^\cW(\La_x)\to\LL\left(\overline{\overline{\rho_\cW(\Gamma)}}\right)$
with kernel $\La_x^\cW$. This completes the proof.
\end{proof}

Recall that there is a natural identification
$\LL(\Iso^{\germ}_x(M))=\Kill^\germ_x(M)_0$ such that if $v$ is a
local Killing field defined around $x$ with $\ev_x(v)=0$ and $f_t$
is the local flow generated by $v$, then $\exp(-t\langle
v\rangle_x)=\langle f_t\rangle_x$. The next result will be used in
the proof of Theorem \ref{T:main}(2).

\begin{lemma}\label{L:L-hull}
Let $M$, $\cW$, and $x$ be as in Lemma \ref{L:exact}, and let
$\sA_x$ be a closed subgroup of $\Iso^\germ_x(M)$. Then
$\Hull_\Gamma^\cW(\sA_{x})$ is a closed subgroup of $\sA_x$ and
$\LL(\Hull_\Gamma^\cW(\sA_{x}))=\Hull_\Gamma^\cW(\LL(\sA_x))$.
\end{lemma}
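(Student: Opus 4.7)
The plan is to realize $\Hull_\Gamma^\cW(\sA_x)$ as the preimage of the closed subgroup $\overline{\overline{\rho_\cW(\Gamma)}}\subset\GL(\cW)$ under a suitable continuous Lie group homomorphism, and then deduce both conclusions of the lemma from the standard closed-subgroup/Lie-subalgebra correspondence.

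Fix $\tilde{x}\in\pi^{-1}(x)$, set $\cW_x=\lambda_{\tilde{x}}(\cW)$, and let $\lambda'_{\tilde{x}}:\cW\to\cW_x$ be the isomorphism provided by Theorem \ref{T:Gromov1}(3). By Theorem \ref{T:Gromov1}(2)--(4), $\Iso^\germ_x(M)$ is a finite-dimensional Lie group embedded in $\GL^i(n)$, and its natural action by $d\varphi$ on the finite-dimensional $\Iso^\germ_x(M)$-stable subspace $\Kill^\germ_x(M)\subset\Vect^i_x(M)$ is a continuous linear representation. Consequently,
$\sA_x'=\{\varphi\in\sA_x\mid d\varphi(\cW_x)\subset\cW_x\}$
is a closed subgroup of $\sA_x$, and
$$\delta:\sA_x'\longrightarrow\GL(\cW),\qquad\delta(\varphi)=(\lambda'_{\tilde{x}})^{-1}\circ d\varphi|_{\cW_x}\circ\lambda'_{\tilde{x}}$$
is a continuous Lie group homomorphism. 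Straight from Definition \ref{D:hull}(1), $\Hull_\Gamma^\cW(\sA_x)\subset\sA_x'$ and $\Hull_\Gamma^\cW(\sA_x)=\delta^{-1}\bigl(\overline{\overline{\rho_\cW(\Gamma)}}\bigr)$. Since $\overline{\overline{\rho_\cW(\Gamma)}}$ is Zariski, hence Hausdorff, closed in $\GL(\cW)$, this preimage is a closed subgroup of $\sA_x'$, and therefore of $\sA_x$.

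For the Lie algebra identity, the key input is the derivative formula
$$d\delta(\eta)=(\lambda'_{\tilde{x}})^{-1}\circ\ad(\eta)|_{\cW_x}\circ\lambda'_{\tilde{x}},\qquad\eta\in\LL(\sA_x'),$$
which I would prove by combining the identification $\LL(\Iso^\germ_x(M))=\Kill^\germ_x(M)_0$ under which $\exp(-t\langle v\rangle_x)=\langle f_t\rangle_x$ with the Lie derivative identity $\frac{d}{dt}\bigl|_{t=0}(f_t)_*u=-[v,u]$ on germs at $x$; a consequence is the expected equality $\LL(\sA_x')=\{\eta\in\LL(\sA_x)\mid\ad(\eta)(\cW_x)\subset\cW_x\}$. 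With this in hand, functoriality of $\LL$ for closed subgroups yields
$$\LL\bigl(\Hull_\Gamma^\cW(\sA_x)\bigr)=(d\delta)^{-1}\bigl(\LL(\overline{\overline{\rho_\cW(\Gamma)}})\bigr),$$
and unwinding the derivative formula identifies the right-hand side with $\Hull_\Gamma^\cW(\LL(\sA_x))$ as defined in Definition \ref{D:hull}(3). Note that any $\eta$ satisfying that defining condition automatically lies in $\LL(\sA_x')$, since $B(\cW)\subset\cW$ forces $\ad(\eta)(\cW_x)\subset\cW_x$ (compare the proof of Lemma \ref{L:exact}).

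The main obstacle is the derivative computation for $\delta$: one must keep the signs straight so that the $-\eta$ implicit in $\exp(-t\eta)=\langle f_t\rangle_x$ and the $-[v,u]$ in the pushforward identity combine to produce the $+\ad(\eta)$ appearing in Definition \ref{D:hull}(3). Once that is settled, everything else is a routine transport of the closed-subgroup/Lie-subalgebra correspondence across the continuous homomorphism $\delta$.
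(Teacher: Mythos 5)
Your proposal is correct and follows essentially the same route as the paper: both realize $\Hull_\Gamma^\cW(\sA_x)$ as the preimage of $\overline{\overline{\rho_\cW(\Gamma)}}$ under the representation $\varphi\mapsto(\lambda'_{\tilde{x}})^{-1}\circ d\varphi\circ\lambda'_{\tilde{x}}$ of the stabilizer $\sA_x'$ of $\cW_x$, and then pass to Lie algebras. Your explicit check of the sign in the derivative formula (via $\exp(-t\eta)=\langle f_t\rangle_x$ and $\frac{d}{dt}\bigl|_{t=0}(f_t)_*u=-[v,u]$) fills in the step that the paper compresses to ``by taking Lie derivatives,'' and your use of $\subset$ instead of $=$ in defining $\sA_x'$ is immaterial since $d\varphi$ is invertible on the finite-dimensional $\cW_x$.
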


\begin{proof}
Let $\tilde{x}$, $\cW_x$ and $\lambda_{\tilde{x}}'$ be as in the
proof of Lemma \ref{L:exact}, and let $\sA'_x=\{\varphi\in\sA_x\mid
d\varphi(\cW_x)=\cW_x\}$. Then $\sA'_x$ is a closed subgroup of
$\sA_x$. For $\varphi\in\sA'_x$, we denote
$\rho(\varphi)=\lambda_{\tilde{x}}'^{-1}\circ
d\varphi\circ\lambda_{\tilde{x}}'$. Then $\rho:\sA'_x\to\GL(\cW)$ is
a representation and
$\Hull_\Gamma^\cW(\sA_x)=\rho^{-1}\left(\overline{\overline{\rho_\cW(\Gamma)}}\right)$.
Thus $\Hull_\Gamma^\cW(\sA_{x})$ is a closed subgroup of $\sA_x$.
Denote $\La_x=\LL(\sA_x)$, and let $\La'_x$ and $\delta$ be as in
the proof of Lemma \ref{L:exact}. By taking Lie derivatives, we see
that $\LL(\sA'_x)=\La'_x$ and $d\rho=\delta$. Thus
$\LL(\Hull_\Gamma^\cW(\sA_{x}))=\delta^{-1}\left(\LL\left(\overline{\overline{\rho_\cW(\Gamma)}}\right)\right)
=\Hull_\Gamma^\cW(\La_x)$.
\end{proof}

\section{Discompact groups}\label{S:discompact}

Let $\KK$ be a locally compact non-discrete field. The notion of
$\KK$-discompact groups was introduced by Shalom \cite{Sh} to
distinguish those $\KK$-groups for which the Borel density theorem
holds. A $\KK$-group $\bG$ is \emph{$\KK$-compact} if $\bG_\KK$ is
compact, and is \emph{$\KK$-discompact} if any $\KK$-homomorphism
from $\bG$ to a $\KK$-compact $\KK$-group is trivial. Obviously,
$\KK$-discompact groups are connected. It was proved in \cite{Sh}
that any $\KK$-group $\bG$ admits a unique maximal $\KK$-discompact
$\KK$-subgroup $R_d(\bG)$ of $\bG$, called the
\emph{$\KK$-discompact radical} of $\bG$, which is characteristic
for all $\KK$-automorphisms of $\bG$, such that $\bG/R_d(\bG)$ is
$\KK$-compact.

If $\mathrm{char}\,\KK=0$, the situation becomes much simpler.
Recall that a normal $\KK$-subgroup $\bN$ of a $\KK$-group $\bG$ is
\emph{$\KK$-cocompact} if $\bG_\KK/\bN_\KK$ is compact. If
$\mathrm{char}\,\KK=0$, then $\bN$ is $\KK$-cocompact if and only if
$\bG/\bN$ is $\KK$-compact (\cite[Sect. 2.5]{Sh}). Hence $\bG$ is
$\KK$-discompact if and only if it does not admit proper normal
$\KK$-cocompact $\KK$-subgroups. This enables us to characterize the
$\KK$-discompactness of $\bG$ as follows.

\begin{proposition}\label{P:discompact}
Let $\bG$ be a connected $\KK$-group, where $\mathrm{char}\,\KK=0$.
Then $\bG$ is $\KK$-discompact if and only if the radical $R(\bG)$
is $\KK$-split and all almost $\KK$-simple factors of the semisimple
$\KK$-group $\bG/R(\bG)$ are $\KK$-isotropic.
\end{proposition}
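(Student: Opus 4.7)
The plan is to translate $\KK$-discompactness into the absence of proper normal $\KK$-cocompact $\KK$-subgroups (as already noted in the paragraph preceding the proposition), and to exploit the standard structural fact that, for a connected $\KK$-group $\bH$ in characteristic zero, $\bH_\KK$ is compact if and only if $\bH$ is reductive and $\KK$-anisotropic (equivalently, an almost direct product of a $\KK$-anisotropic torus and a semisimple $\KK$-group with all almost $\KK$-simple factors $\KK$-anisotropic).

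For the sufficient direction I would assume $R(\bG)$ is $\KK$-split and every almost $\KK$-simple factor of $\bS := \bG/R(\bG)$ is $\KK$-isotropic, and let $\bN\lhd\bG$ be a $\KK$-subgroup with $\bG/\bN$ $\KK$-compact. Project $\bN$ to $\bS$: the further quotient $\bG/(\bN R(\bG))$ is semisimple and, being a quotient of $\bG/\bN$, is $\KK$-compact. But a semisimple quotient of $\bS$ is isogenous to a product of some of the almost $\KK$-simple factors of $\bS$, each of which is $\KK$-isotropic and hence non-$\KK$-compact; so this quotient is trivial, and $\bG=\bN R(\bG)$. Then $\bG/\bN\cong R(\bG)/(R(\bG)\cap\bN)$ is a quotient of $R(\bG)$ and thus $\KK$-split solvable, but it is also reductive $\KK$-anisotropic, forcing it to be a $\KK$-split $\KK$-anisotropic torus, hence trivial. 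So $\bN=\bG$.

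For the necessary direction I would assume $\bG$ is $\KK$-discompact and argue by contrapositive on the two conclusions separately. If some almost $\KK$-simple factor $\bS_0$ of $\bS$ is $\KK$-anisotropic, write $\bS=\bS_0\cdot\bS_1$ as an almost direct product and take $\bN$ to be the preimage of $\bS_1$ under $\bG\to\bS$; then $\bG/\bN$ is isogenous to $\bS_0$, $\KK$-anisotropic semisimple, hence $\KK$-compact, contradicting $\KK$-discompactness. Next, assuming $R(\bG)$ is not $\KK$-split, I would invoke the $\KK$-rational Levi decomposition $\bG=\bL\ltimes R_u(\bG)$ (available in characteristic zero) and write $\bL=\bZ\cdot\bS'$ with $\bZ=Z(\bL)^0$ a $\KK$-torus and $\bS'=[\bL,\bL]$ semisimple, so that $R(\bG)=\bZ\cdot R_u(\bG)$ and $R(\bG)/R_u(\bG)$ is isogenous to $\bZ$. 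The hypothesis forces $\bZ$ not to be $\KK$-split, so its maximal $\KK$-anisotropic subtorus $\bZ_a$ is nontrivial. Setting $\bN=\bS'\cdot\bZ_s\cdot R_u(\bG)$, where $\bZ_s$ is the maximal $\KK$-split subtorus of $\bZ$, the quotient $\bG/\bN$ is isogenous to $\bZ_a$, a nontrivial $\KK$-anisotropic torus, hence $\KK$-compact—again contradicting $\KK$-discompactness.

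The main technical point I expect to handle carefully is the normality of $\bN=\bS'\cdot\bZ_s\cdot R_u(\bG)$ in $\bG$. This combines four ingredients: $\bZ$ is central in $\bL$ and therefore centralizes $\bS'$; $\bS'$ normalizes $R_u(\bG)$ by the Levi decomposition; conjugation by $R_u(\bG)$ shifts any $\ell\in\bL$ only by a factor in the normal subgroup $R_u(\bG)$; and $\bZ_s,\bZ_a$ are characteristic subgroups of $\bZ$, preserved by the $\bG$-action on $\bZ$ because $\mathrm{Aut}(\bZ)$ is discrete while $\bG$ is connected. Together with ensuring $\KK$-rationality throughout (which relies on the char-zero Levi theorem), these give a clean bookkeeping argument. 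I do not expect further difficulty beyond this structural verification.
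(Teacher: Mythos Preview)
Your argument is correct. The paper, however, does not give a self-contained proof: it simply cites Shalom \cite[Thm.~3.6 and Prop.~3.12]{Sh} together with \cite[Thm.~3.1]{PR} and deduces the statement from those results. Your approach is genuinely different in that you argue directly from the structure theory---the char-zero Levi decomposition, the splitting $\bZ=\bZ_s\cdot\bZ_a$ of the central torus, and the standard fact that $\bH_\KK$ is compact iff $\bH$ is reductive $\KK$-anisotropic---to exhibit explicit proper $\KK$-cocompact normal subgroups when the structural conditions fail, and to rule them out when they hold. What the paper's route buys is brevity and a direct link to Shalom's general theory of the discompact radical; what your route buys is a transparent, self-contained argument that makes clear exactly where each hypothesis enters. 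One small comment: your worry about the normality of $\bN=\bS'\cdot\bZ_s\cdot R_u(\bG)$ via the $\bG$-action on $\bZ$ is slightly misplaced, since $\bZ$ need not itself be normal in $\bG$; but your commutator computation (conjugation by $R_u(\bG)$ moves elements of $\bL$ only by factors in $R_u(\bG)\subset\bN$) already suffices, so the conclusion stands.
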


\begin{proof}
It follows from \cite[Thm. 3.6 and Prop. 3.12]{Sh} and \cite[Thm.
3.1]{PR}.
\end{proof}

Let $G=\bG_\RR$ be a real algebraic group, where $\bG$ is an
$\RR$-group. By taking the Zariski closure of $G$ in $\bG$, we may
assume without loss of generality that $G$ is Zariski dense in
$\bG$. We say that $G$ is \emph{discompact} if $\bG$ is
$\RR$-discompact. Then $G$ is discompact if and only if it has no
proper normal cocompact algebraic subgroups. Note that if $G$ is
discompact then it is Zariski connected. As in \cite{BFM}, we refer
to $R_d(G)=R_d(\bG)_\RR$ as the \emph{discompact radical} of $G$,
which is the unique maximal discompact algebraic subgroups of $G$
and is characteristic (for algebraic automorphisms of $G$) and
cocompact in $G$. It is obvious that $R(G)=(R(\bG)_\RR)_0$, where
$R(G)$ is the Lie-theoretic radical of $G$. We say that $R(G)$ is
\emph{$\RR$-split} if so is $R(\bG)$.

\begin{corollary}\label{C:discompact}
Let $G$ be a Zariski connected real algebraic group. Then $G$ is
discompact if and only if $R(G)$ is $\RR$-split and $G_0/R(G)$ has
no compact factors.
\end{corollary}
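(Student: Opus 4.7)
The plan is to derive the corollary essentially as a translation of Proposition \ref{P:discompact} into the Lie-theoretic language used here. Since $G$ is Zariski connected, we have $G = \bG_\RR$ for the Zariski-connected $\RR$-group $\bG = \overline{\overline{G}}$ (sitting inside some $\GL(\cV)$). By the definition recalled just before the statement, $G$ is discompact precisely when $\bG$ is $\RR$-discompact, and $R(G)$ is $\RR$-split precisely when $R(\bG)$ is $\RR$-split. So applying Proposition \ref{P:discompact} with $\KK=\RR$, it remains only to verify that $G_0/R(G)$ has no compact factors if and only if every almost $\RR$-simple factor of the semisimple $\RR$-group $\bH:=\bG/R(\bG)$ is $\RR$-isotropic.

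Next I would compare Lie algebras. Since $R(G)=(R(\bG)_\RR)_0$, we have $\LL(R(G))=\LL(R(\bG))$ and $\LL(G_0)=\LL(\bG)$, hence
\[
\LL(G_0/R(G))=\LL(\bG)/\LL(R(\bG))=\LL(\bH)=\LL((\bH_\RR)_0).
\]
The semisimple $\RR$-group $\bH$ decomposes as an almost direct product of almost $\RR$-simple $\RR$-subgroups $\bH_1,\dots,\bH_k$, inducing a direct sum decomposition of the real semisimple Lie algebra $\LL(\bH)$ into the simple ideals $\LL(\bH_i)$. A standard fact is that the simple real Lie algebra $\LL(\bH_i)=\LL((\bH_i)_\RR)_0$ is of compact type if and only if $(\bH_i)_\RR$ is compact, i.e.\ if and only if $\bH_i$ is $\RR$-anisotropic.

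Since by convention the connected semisimple Lie group $G_0/R(G)$ has no compact factors exactly when none of the simple ideals in its Lie algebra is of compact type, the displayed identification shows that this holds if and only if each $\bH_i$ is $\RR$-isotropic, as required. There is no substantive obstacle in the argument; the only point demanding any care is the identification of the Lie algebra of $G_0/R(G)$ with that of $(\bH_\RR)_0$, which is a direct consequence of the definitions $R(G)=(R(\bG)_\RR)_0$ and $G=\bG_\RR$.
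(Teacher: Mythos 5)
Your proof is correct and follows essentially the same route as the paper's: apply Proposition~\ref{P:discompact} with $\KK=\RR$ and then match the Lie-theoretic condition ``$G_0/R(G)$ has no compact factors'' with the algebraic condition ``all almost $\RR$-simple factors of $\bG/R(\bG)$ are $\RR$-isotropic.'' You simply make explicit, via the identification of Lie algebras, the observation that the paper encapsulates as ``$G_0/R(G)$ is locally isomorphic to $(\bG/R(\bG))_\RR$.''
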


\begin{proof}
It follows from Proposition \ref{P:discompact} and the fact that
$G_0/R(G)$ is locally isomorphic to $(\bG/R(\bG))_\RR$.
\end{proof}

In the rest of this section, we study the discompactness of a Lie
group $G$ with respect to a (finite-dimensional) real representation
$\rho:G\to\GL(\cV)$. We will make extensive use of Levi subgroups
of $G$, which are maximal connected semisimple Lie subgroups of $G$
and satisfy the following properties.
\begin{itemize}
\item All Levi subgroups of $G$ are mutually conjugate and are locally
isomorphic to $G_0/R(G)$. If $L$ is one of them, then $G_0=L\cdot
R(G)$ (\cite{Va84}).
\item If $L$ is a connected semisimple Lie
subgroup of $G$, $N$ is a connected solvable normal Lie subgroup of
$G$, and $G_0=L\cdot N$, then $L$ is a Levi subgroup of $G$ and
$N=R(G)$ (\cite[Lem. 4.9.1]{Ad}).
\item If $\varphi:G\to H$ is a homomorphism of Lie
groups and $L$ is a Levi subgroup of $G$, then $\varphi(L)$ is a
Levi subgroup of $\varphi(G)$ and $R(\varphi(G))=\varphi(R(G))$
(\cite[Lem. 5.5.1]{Ad}).
\end{itemize}

\begin{proposition}\label{P:rho-discompact}
Let $G$ be a connected Lie group, and let $\rho:G\to\GL(\cV)$ be a
representation of $G$ in a real vector space $\cV$. Then $G$ is
$\rho$-discompact if and only if
\begin{itemize}
\item[(1)] all elements in $\rho(R(G))$ have only real eigenvalues, and
\item[(2)] any Levi subgroup of $\rho(G)$ has no compact factors.
\end{itemize}
\end{proposition}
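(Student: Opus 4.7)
The plan is to reduce Proposition~\ref{P:rho-discompact} to Corollary~\ref{C:discompact} applied to the real algebraic group $H=\overline{\overline{\rho(G)}}\subset\GL(\cV)$. Since $\rho(G)$ is the continuous image of a connected Lie group, it is Hausdorff-connected, which forces $H$ to be Zariski-connected (otherwise its two Zariski components would give a disconnection of $\rho(G)$). By definition, $G$ is $\rho$-discompact iff $H$ is discompact, and Corollary~\ref{C:discompact} then reduces the proposition to showing: (i) ``$R(H)$ is $\RR$-split'' is equivalent to condition~(1); and (ii) ``$H_0/R(H)$ has no compact factors'' is equivalent to condition~(2).

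The structural core is the identification $R(H)=\overline{\overline{\rho(R(G))}}$. The right-hand side is Zariski-connected (closure of a connected set), solvable (solvability is a Zariski-closed condition on tuples in $\GL(\cV)$), and normal in $H$ (because $\rho(R(G))\lhd\rho(G)$ and $\rho(G)$ is Zariski dense in $H$), giving $\overline{\overline{\rho(R(G))}}\subset R(H)$. For the reverse inclusion, pick a Levi subgroup $L$ of $G$, so that $\rho(G)=\rho(L)\rho(R(G))$; the image of $\rho(L)$ in $H/\overline{\overline{\rho(R(G))}}$ is then Zariski dense, and its Zariski closure is a semisimple algebraic group because $\rho(L)$ is a connected semisimple Lie subgroup of $\GL(\cV)$. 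Consequently $H/\overline{\overline{\rho(R(G))}}$ itself is semisimple, which forces $R(H)\subset\overline{\overline{\rho(R(G))}}$.

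With $R(H)$ identified, condition~(i) becomes ``$\overline{\overline{\rho(R(G))}}$ is $\RR$-split''. A Zariski-connected solvable real algebraic group is $\RR$-split iff all of its elements have only real eigenvalues, equivalently iff it can be simultaneously conjugated into upper triangular form over $\RR$; this $\RR$-triangularizability passes to Zariski closures (the diagonal entries, hence the eigenvalues, remain real), so (i) is equivalent to (1). For (ii)$\Leftrightarrow$(2), a dimension count using $R(H)=\overline{\overline{\rho(R(G))}}$ together with the fact that a semisimple Lie subalgebra of $\Lgl(\cV)$ is its own algebraic hull shows that $d\rho(\Ll)$ is already a Levi subalgebra of $\LL(H_0)$; consequently $H_0/R(H)$ is locally isomorphic to $\rho(L)$, and the two share the same list of compact simple factors. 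The main obstacle will be the delicate bookkeeping between three layers of structure---the Lie-theoretic Levi/radical data of $G$, those of $\rho(G)$, and the algebraic-group data of $H$---together with verifying at each step that Zariski closure preserves exactly the structural feature at hand.
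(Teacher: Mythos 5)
Your plan reproduces the paper's proof in its essentials: pass to the Zariski closure $H=\overline{\overline{\rho(G)}}$, invoke Corollary~\ref{C:discompact}, and translate the two criteria by identifying the radical and a Levi subgroup of $H$ in terms of $\rho(R(G))$ and $\rho(L)$. The paper obtains both identifications simultaneously from the product formula $\bigl(\overline{\overline{G}}\bigr)_0 = L\cdot\bigl(\overline{\overline{R(G)}}\bigr)_0$ (citing [Mo, Prop.\ 4.6.4]) and then uses the full-flag characterization of real eigenvalues and of $\RR$-splitness, which is exactly your simultaneous $\RR$-triangularization argument in different clothing, so your route is a repackaging rather than a genuinely different proof. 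One small imprecision worth fixing: $\overline{\overline{\rho(R(G))}}$ is Zariski connected but need \emph{not} be Hausdorff connected even though $\rho(R(G))$ is (cf.\ $\overline{\overline{\RR_{>0}}}=\RR^\times$), so the correct identity is $R(H)=\bigl(\overline{\overline{\rho(R(G))}}\bigr)_0$, matching the paper; this changes nothing in the eigenvalue or compact-factor translations, since both criteria are insensitive to replacing a real algebraic group by its Hausdorff identity component.
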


\begin{proof}
Since $\rho(R(G))=R(\rho(G))$, we may assume without loss of
generality that $G$ is a connected Lie subgroup of $\GL(\cV)$ and
$\rho$ is the inclusion map. Let $L$ be a Levi subgroup of $G$. Then
$G=L\cdot R(G)$. Since $\left(\overline{\overline{L}}\right)_0=L$,
we have
$\left(\overline{\overline{G}}\right)_0=L\cdot\left(\overline{\overline{R(G)}}\right)_0$
(see \cite[Prop. 4.6.4]{Mo}). This implies that (i) $L$ is a Levi
subgroup of $\overline{\overline{G}}$, and (ii)
$R\left(\overline{\overline{G}}\right)=\left(\overline{\overline{R(G)}}\right)_0$.
Assertion (i) implies that $L$ is locally isomorphic to
$\left(\overline{\overline{G}}\right)_0\Big/R\left(\overline{\overline{G}}\right)$.
Hence $L$ has no compact factors if and only if so does
$\left(\overline{\overline{G}}\right)_0\Big/R\left(\overline{\overline{G}}\right)$.
We also have
\begin{align*}
&\text{all eigenvalues of every $g\in R(G)$ are real}\\
\Longleftrightarrow&\text{$R(G)$ fixes a full flag of $\cV$} &&\text{(by \cite[Cor. 1.29]{Kn})}\\
\Longleftrightarrow&\text{$\left(\overline{\overline{R(G)}}\right)_0$ fixes a full flag of $\cV$}\\
\Longleftrightarrow&\text{$R\left(\overline{\overline{G}}\right)$ fixes a full flag of $\cV$} &&\text{(by assertion (ii))}\\
\Longleftrightarrow&\text{$R\left(\overline{\overline{G}}\right)$ is
$\RR$-split} &&\text{(by \cite[Thm. 15.4(iii)]{Bo91}).}
\end{align*}
Now the proposition follows from Corollary \ref{C:discompact}.
\end{proof}

The discompactness of $G$ with respect to its adjoint representation
is of particular interest to us (this corresponds to the case of
Lemma \ref{L:discompact} where $G$ acts faithfully on $M$ and
$\cV=\cG$). Proposition \ref{P:rho-discompact} implies the following
characterization of the $\Ad$-discompactness of $G$.

\begin{corollary}\label{C:Ad-discompact}
Let $G$ be a connected Lie group. Then $G$ is $\Ad$-discompact if
and only if $R(G)$ is split solvable and $G/R(G)$ has no compact
factors.
\end{corollary}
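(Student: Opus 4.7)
The plan is to apply Proposition~\ref{P:rho-discompact} directly to $\rho=\Ad:G\to\GL(\Lg)$ and translate its two hypotheses into the stated conditions on $R(G)$ and $G/R(G)$.

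First I would identify condition~(1) of the proposition — that every $\Ad(g)$ with $g\in R(G)$ has only real eigenvalues on $\Lg$ — with the assertion that $R(G)$ is split solvable. Since $R(G)\lhd G$ is connected, $\Lr:=\LL(R(G))$ is an ideal of $\Lg$ preserved by $\Ad_G(g)$ for every $g\in R(G)$, and the restriction $\Ad_G(g)|_{\Lr}$ coincides with the adjoint $\Ad_{R(G)}(g)$. On the quotient $\Lg/\Lr$, the action of $\Ad_G(g)$ is trivial for $g\in R(G)$: indeed, $[R(G),G]\subset R(G)$, so $g\exp(tX)g^{-1}\exp(-tX)\in R(G)$ for all $X\in\Lg$, whence $\Ad_G(g)X-X\in\Lr$. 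Consequently the characteristic polynomial of $\Ad_G(g)$ on $\Lg$ is the product of that of $\Ad_{R(G)}(g)$ on $\Lr$ and $(t-1)^{\dim(\Lg/\Lr)}$, and condition~(1) reduces precisely to the defining condition of split solvability of $R(G)$.

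Next I would handle condition~(2), namely that a Levi subgroup of $\Ad(G)$ has no compact factors. Let $L$ be a Levi subgroup of $G$. By the third bulleted property of Levi subgroups recalled just before Proposition~\ref{P:rho-discompact}, $\Ad(L)$ is a Levi subgroup of $\Ad(G)$. Since $L$ is semisimple, the kernel of $\Ad|_L$ is the discrete center of $L$, so $\Ad(L)$ is locally isomorphic to $L$; in particular the simple factors of $\Ad(L)$ correspond bijectively to those of $L$ with compactness preserved. By the first bulleted property, $L$ is locally isomorphic to $G/R(G)$. Therefore condition~(2) is equivalent to $G/R(G)$ having no compact factors.

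Combining the two equivalences yields the corollary. The only slightly delicate point is the eigenvalue bookkeeping in the first paragraph; it rests entirely on the normality of $R(G)$ in $G$, which makes $\Lr$ an ideal and forces the action on $\Lg/\Lr$ to be trivial for $g\in R(G)$. Everything else is a direct transcription of Proposition~\ref{P:rho-discompact}.
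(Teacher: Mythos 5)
Your proof is correct and follows essentially the same route as the paper: apply Proposition~\ref{P:rho-discompact} with $\rho=\Ad$ and translate conditions~(1) and~(2) into statements about $R(G)$ and $G/R(G)$ respectively, using the normality of $R(G)$ (so that $\Ad(g)$, $g\in R(G)$, acts trivially on $\Lg/\Lr$ and restricts on $\Lr$ to $\Ad_{R(G)}(g)$) and the standard properties of Levi subgroups. The only difference is cosmetic — you spell out the triviality of the action on $\Lg/\Lr$ via commutators, while the paper phrases the same fact as ``any subspace containing $\LL(R(G))$ is $R(G)$-invariant.''
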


\begin{proof}
By Proposition \ref{P:rho-discompact}, $G$ is $\Ad$-discompact if
and only if (i) all eigenvalues of $\Ad(g)$ are real for every $g\in
R(G)$, and (ii) any Levi subgroup of $\Ad(G)$ has no compact
factors. Since $R(G)$ is normal in $G$, any subspace of $\Lg$
containing $\LL(R(G))$ is $R(G)$-invariant. So for $g\in R(G)$, all
eigenvalues of $\Ad(g)$ are real if and only if all eigenvalues of
$\Ad(g)|_{\LL(R(G))}$ are real. Hence assertion (i) holds if and
only if $R(G)$ is split solvable. On the other hand, any Levi
subgroup of $\Ad(G)$ is locally isomorphic to $G/R(G)$. So assertion
(ii) holds if and only if $G/R(G)$ has no compact factors. Hence the
corollary follows.
\end{proof}

Now we use Corollary \ref{C:Ad-discompact} to prove Lemma
\ref{L:discompact}. We need the following lemma.

\begin{lemma}\label{L:subrep}
Let $\rho:G\to\GL(\cV)$ be a representation of a Lie group $G$ in a
real vector space $\cV$, let $\cV_0\subset\cV$ be a $G$-invariant
subspace, and let $\rho_1:G\to\GL(\cV_0)$,
$\rho_2:G\to\GL(\cV/\cV_0)$ be the subrepresentation and quotient
representation, respectively.
\begin{itemize}
\item[(1)] If $G$ is $\rho$-discompact, then it is $\rho_1$-discompact and $\rho_2$-discompact.
\item[(2)] If $G$ is connected, then the converse of (1) also holds.
\end{itemize}
\end{lemma}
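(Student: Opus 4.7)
The plan is to reduce both parts to Proposition \ref{P:rho-discompact}, exploiting that, for $P=\{g\in\GL(\cV)\mid g(\cV_0)=\cV_0\}$, the natural projections $\pi_1:P\to\GL(\cV_0)$ and $\pi_2:P\to\GL(\cV/\cV_0)$ are morphisms of real algebraic groups with unipotent abelian common kernel $\ker\pi_1\cap\ker\pi_2\cong\Hom(\cV/\cV_0,\cV_0)$.

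For (1), I would first argue that $\pi_i$ restricts to a surjective morphism of real algebraic groups $\overline{\overline{\rho(G)}}\to\overline{\overline{\rho_i(G)}}$. Indeed, $\pi_i(\overline{\overline{\rho(G)}})$ is Zariski closed (image of an algebraic group under a morphism is closed), contains $\rho_i(G)$ and hence $\overline{\overline{\rho_i(G)}}$; conversely, Zariski continuity of $\pi_i$ gives the reverse inclusion. Then, given any proper normal cocompact algebraic subgroup $N\lhd\overline{\overline{\rho_i(G)}}$, the preimage $\pi_i^{-1}(N)$ is a proper normal algebraic subgroup of $\overline{\overline{\rho(G)}}$ whose cokernel maps isomorphically onto $\overline{\overline{\rho_i(G)}}/N$ and is therefore compact. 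Contrapositively, $\rho$-discompactness propagates to $\rho_1$- and $\rho_2$-discompactness.

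For (2), assuming $G$ connected and both $\rho_i$-discompact, I would verify the two conditions of Proposition \ref{P:rho-discompact} for the representation $\rho$. Condition (1) is immediate: for $g\in R(G)$, the block upper-triangular form of $\rho(g)$ (with diagonal blocks $\rho_1(g)$ and $\rho_2(g)$) shows that its eigenvalue set is the union of the eigenvalue sets of $\rho_i(g)$, all of which are real by Proposition \ref{P:rho-discompact} applied to each $\rho_i$. For condition (2), let $L$ be a Levi subgroup of $\rho(G)$. By the third bullet on Levi subgroups, $\pi_i(L)$ is a Levi subgroup of $\pi_i(\rho(G))=\rho_i(G)$, so by hypothesis and Proposition \ref{P:rho-discompact} it has no compact factors. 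Decompose $L=L'\cdot K$ (almost direct) where $K$ is the product of the compact simple factors; then $\pi_i(K)$ is a compact normal semisimple subgroup of $\pi_i(L)$, hence trivial for $i=1,2$, so $K\subset\ker\pi_1\cap\ker\pi_2$.

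The only nontrivial step is the last one: $\ker\pi_1\cap\ker\pi_2$ consists of elements $I+N$ with $N(\cV)\subset\cV_0$ and $N|_{\cV_0}=0$, hence $N^2=0$, so this group is unipotent (in fact abelian). Since the only compact subgroup of a unipotent linear group is trivial, $K=\{1\}$, and $L$ has no compact factors. Applying Proposition \ref{P:rho-discompact} concludes that $G$ is $\rho$-discompact. The main obstacle, and the only place requiring genuine care, is precisely this Levi analysis: matching the Lie-theoretic notion of ``compact factor'' with the algebraic kernel structure, and exploiting the fact that $\ker\pi_1\cap\ker\pi_2$ is unipotent to kill any would-be compact factor.
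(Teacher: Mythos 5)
Your argument for part (2) is correct and essentially the same as the paper's: you verify the two conditions of Proposition~\ref{P:rho-discompact} for $\rho$, with the block-triangular form handling the eigenvalue condition and the unipotence of $\ker\pi_1\cap\ker\pi_2$ forcing any compact factor of a Levi subgroup of $\rho(G)$ to be trivial.

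Part (1), however, has a genuine gap. You assert that $\pi_i\bigl(\overline{\overline{\rho(G)}}\bigr)$ is Zariski closed because ``the image of an algebraic group under a morphism is closed.'' That is true over an algebraically closed field, but fails at the level of $\RR$-points: the image of the $\RR$-points of an $\RR$-group under an $\RR$-morphism is in general only an open finite-index subgroup of the Zariski closure of the image. Concretely, with $\cV=\RR^2$ and $\cV_0=\RR\times\{0\}$, the subgroup $H=\{\mathrm{diag}(a,a^2):a\in\RR^*\}$ equals its own Zariski closure, yet $\pi_2(H)=(0,\infty)$ is not Zariski closed in $\RR^*=\overline{\overline{\pi_2(H)}}$. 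So $\pi_i$ restricted to $\overline{\overline{\rho(G)}}$ need not surject onto $\overline{\overline{\rho_i(G)}}$, and your claim that the quotient by $\pi_i^{-1}(N)\cap\overline{\overline{\rho(G)}}$ ``maps isomorphically onto $\overline{\overline{\rho_i(G)}}/N$'' can fail. The repair is to replace surjectivity by the correct statement that $\pi_i\bigl(\overline{\overline{\rho(G)}}\bigr)$ is an open, Zariski dense subgroup of $\overline{\overline{\rho_i(G)}}$ (this is exactly what the paper invokes, citing \cite[Cor.~4.6.5]{Mo}); then $\overline{\overline{\rho(G)}}\bigm/\bigl(\pi_i^{-1}(N)\cap\overline{\overline{\rho(G)}}\bigr)$ is topologically isomorphic to the open, hence closed, hence compact subgroup $\pi_i\bigl(\overline{\overline{\rho(G)}}\bigr)N/N$ of $\overline{\overline{\rho_i(G)}}/N$, discompactness forces $\overline{\overline{\rho(G)}}\subset\pi_i^{-1}(N)$, and Zariski density of $\pi_i\bigl(\overline{\overline{\rho(G)}}\bigr)$ then gives $N=\overline{\overline{\rho_i(G)}}$, which is the paper's argument.
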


\begin{proof}
Denote $\GL(\cV,\cV_0)=\{A\in\GL(\cV)\mid A(\cV_0)=\cV_0\}$, and
consider the algebraic homomorphisms
$\varphi_1:\GL(\cV,\cV_0)\to\GL(\cV_0)$ and
$\varphi_2:\GL(\cV,\cV_0)\to\GL(\cV/\cV_0)$ defined by
$\varphi_1(A)=A|_{\cV_0}$, $\varphi_2(A)(v+\cV_0)=Av+\cV_0$. It is
easy to see that $\rho(G)\subset\GL(\cV,\cV_0)$ and
$\rho_i(G)=\varphi_i(\rho(G))$, $i=1,2$.

(1) Let $N$ be a normal cocompact algebraic subgroup of
$\overline{\overline{\rho_i(G)}}$, $i=1$ or $2$. By \cite[Cor.
4.6.5]{Mo}, $\varphi_i\left(\overline{\overline{\rho(G)}}\right)$ is
an open (and Zariski dense) subgroup of
$\overline{\overline{\varphi_i(\rho(G))}}=\overline{\overline{\rho_i(G)}}$.
So $N\cap\varphi_i\left(\overline{\overline{\rho(G)}}\right)$ is
cocompact in $\varphi_i\left(\overline{\overline{\rho(G)}}\right)$,
and hence $\varphi_i^{-1}(N)\cap\overline{\overline{\rho(G)}}$ is
cocompact in $\overline{\overline{\rho(G)}}$. Since
$\overline{\overline{\rho(G)}}$ is discompact, we have
$\overline{\overline{\rho(G)}}\subset\varphi_i^{-1}(N)$. Thus
$\varphi_i\left(\overline{\overline{\rho(G)}}\right)\subset N$. But
$\varphi_i\left(\overline{\overline{\rho(G)}}\right)$ is Zariski
dense in $\overline{\overline{\rho_i(G)}}$. So
$N=\overline{\overline{\rho_i(G)}}$. Hence
$\overline{\overline{\rho_i(G)}}$ is discompact, i.e., $G$ is
$\rho_i$-discompact.

(2) Suppose that $G$ is connected and $\rho_i$-discompact, $i=1,2$.
Then Proposition \ref{P:rho-discompact} implies that all eigenvalues
of $\rho_i(g)$ are real for every $g\in R(G)$. Hence all eigenvalues
of $\rho(g)$ are real for every $g\in R(G)$. By Proposition
\ref{P:rho-discompact}, to prove that $G$ is $\rho$-discompact, it
suffices to show that any Levi subgroup $L$ of $\rho(G)$ has no
compact factors. Suppose on the contrary that $L$ has a compact
simple factor $L'$. Since $\varphi_i(L)$ is a Levi subgroup of
$\varphi_i(\rho(G))=\rho_i(G)$, Proposition \ref{P:rho-discompact}
implies that $\varphi_i(L)$ has no compact factors. So
$L'\subset\ker(\varphi_1)\cap\ker(\varphi_2)$. But
$\ker(\varphi_1)\cap\ker(\varphi_2)$ is unipotent, a contradiction.
This completes the proof.
\end{proof}

\begin{remark}
Lemma \ref{L:subrep}(2) does not hold without the connectedness
condition. For example, let $G=\left\{\begin{pmatrix}
a & 0\\
0 & b
\end{pmatrix}\Big| a,b\in\RR\backslash\{0\}, a^2=b^2\right\}$,
let $\rho$ be the natural representation of $G$ in $\cV=\RR^2$, and
let $\cV_0=\RR\times\{0\}$. Since $G$ is a real algebraic group and
is not Zariski connected, it is not $\rho$-discompact. But $G$ is
discompact with respect to the subrepresentation in $\cV_0$ and the
quotient representation in $\cV/\cV_0$.
\end{remark}

\begin{proof}[Proof of Lemma \ref{L:discompact}]
(1) Denote the quotient representation of $G$ in $\cV/\cG$ by
$\rho_{\cV/\cG}$. By Lemma \ref{L:subrep}(2), it suffices to prove
that $G$ is discompact with respect to $\rho_\cG$ and
$\rho_{\cV/\cG}$. Since the induced infinitesimal action of $\Lg$ on
$M$ is $G$-equivariant, $\rho_\cG$ is equivalent to a quotient
representation of the adjoint representation of $G$. By Corollary
\ref{C:Ad-discompact} and Lemma \ref{L:subrep}(1), $G$ is
$\rho_\cG$-discompact. On the other hand, the conditions in Lemma
\ref{L:discompact}(1) imply that all eigenvalues of
$\rho_{\cV/\cG}(g)$ are real for every $g\in R(G)$, and that any
Levi subgroup of $\rho_{\cV/\cG}(G)$ has no compact factors. By
Proposition \ref{P:rho-discompact}, $G$ is
$\rho_{\cV/\cG}$-discompact.

(2) Suppose that $G$ is $\rho_\cV$-discompact and the $G$-action is
faithful. By Lemma \ref{L:subrep}(1), $G$ is $\rho_\cG$-discompact
and $\rho_{\cV/\cG}$-discompact. The faithfulness of the $G$-action
implies that the induced infinitesimal action of $\Lg$ on $M$ is
injective. So $\rho_\cG$ is equivalent to the adjoint representation
of $G$. Hence $G$ is $\Ad$-discompact. By Corollary
\ref{C:Ad-discompact}, $R(G)$ is split solvable and $G/R(G)$ has no
compact factors. On the other hand, since $G$ is
$\rho_{\cV/\cG}$-discompact, Proposition \ref{P:rho-discompact}
implies that all eigenvalues of $\rho_{\cV/\cG}(g)$ are real for
every $g\in R(G)$. By \cite[Cor. 1.29]{Kn}, $R(G)$ fixes a full flag
of $\cV/\cG$, hence fixes a sequence of subspaces of $\cV$ with the
required properties.
\end{proof}

We conclude this section with the following result.

\begin{proposition}\label{P:discompact-radical}
Let $G$ be a Lie group, and let $\rho:G\to\GL(\cV)$ be a
representation of $G$ in a real vector space $\cV$. Then there
exists a unique maximal connected $\rho$-discompact Lie subgroup $N$
of $G$. Moreover, $N$ is a closed normal subgroup of $G$ and $G/N$
is locally isomorphic to a compact Lie group.
\end{proposition}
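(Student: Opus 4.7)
The plan is to construct $N$ as the unique maximal element of the family $\mathcal{F}$ of connected $\rho$-discompact Lie subgroups of $G$ (all of which lie in $G_0$). The crucial closure property is that if $H_1,H_2\in\mathcal{F}$, then the connected Lie subgroup $\langle H_1,H_2\rangle$ they generate lies in $\mathcal{F}$: writing $\bG_i=\overline{\overline{\rho(H_i)}}$, any algebraic homomorphism $\phi$ from $\overline{\overline{\rho(\langle H_1,H_2\rangle)}}=\overline{\overline{\langle\bG_1,\bG_2\rangle}}$ to a compact real algebraic group restricts trivially to each $\bG_i$ by the definition of discompact, hence is trivial on a Zariski dense subset and so altogether trivial. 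Since $G$ is finite-dimensional, $\mathcal{F}$ contains an element $N$ of maximal dimension; for any $N'\in\mathcal{F}$, the join $\langle N,N'\rangle\in\mathcal{F}$ contains $N$ and shares its maximal dimension, so the two connected immersed subgroups have the same Lie algebra and coincide, whence $N'\subseteq N$. This gives both existence and uniqueness of $N$.

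For closedness and normality, the Hausdorff closure $\bar N$ is a connected Lie subgroup with $\overline{\overline{\rho(\bar N)}}=\overline{\overline{\rho(N)}}$, since Zariski closed sets in $\GL(\cV)$ are Hausdorff closed; therefore $\bar N\in\mathcal{F}$ and $\bar N=N$ by maximality. For any $g\in G$, conjugation by $\rho(g)$ is an algebraic automorphism of $\GL(\cV)$, so $\overline{\overline{\rho(gNg^{-1})}}$ is algebraically isomorphic to $\overline{\overline{\rho(N)}}$ and hence discompact; thus $gNg^{-1}\in\mathcal{F}$ and $gNg^{-1}\subseteq N$ by maximality.

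For the compact quotient assertion, I would set $\bG_0=\overline{\overline{\rho(G_0)}}$, let $R_d(\bG_0)$ be its discompact radical, and define $K$ to be the Hausdorff identity component of $\rho^{-1}(R_d(\bG_0))$. Since $R_d(\bG_0)$ is the unique maximal discompact subgroup of $\bG_0$, every $H\in\mathcal{F}$ has $\overline{\overline{\rho(H)}}\subseteq R_d(\bG_0)$, so $N\subseteq K$. The composition $\phi:G_0\to(\bG_0/R_d(\bG_0))(\RR)$ takes values in a compact Lie group and has $K=(\ker\phi)_0$, so $\Lg_0/\LL(K)$ embeds in a compact Lie algebra and is therefore compact. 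It remains to show $K\subseteq N$, equivalently that $\bM:=\overline{\overline{\rho(K)}}$ is discompact. Because $K$ is normal in $G_0$, $\bM$ is Zariski closed and normal in $\bG_0$, and in particular Zariski closed and normal in $R_d(\bG_0)$. The main obstacle is the structural lemma that every Zariski closed normal subgroup of a discompact group is discompact; I would deduce this from Proposition \ref{P:discompact} by checking, for $\bM\lhd R_d(\bG_0)$, that $R(\bM)=\bM\cap R(R_d(\bG_0))$ is $\RR$-split (being Zariski closed in an $\RR$-split solvable group) and that $\bM/R(\bM)$ embeds as a normal subgroup of the $\RR$-isotropic semisimple quotient $R_d(\bG_0)/R(R_d(\bG_0))$, whose normal subgroups are products of $\RR$-isotropic almost simple factors.
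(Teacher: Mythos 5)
Your proof is correct but follows a genuinely different route from the paper's. The paper directly proposes $N=\rho^{-1}(D)_0$ with $D=R_d(\overline{\overline{\rho(G_0)}})$, and verifies its $\rho$-discompactness by an explicit Levi-decomposition computation: it uses Mostow's result $(\overline{\overline{\rho(G_0)}},\overline{\overline{\rho(G_0)}})\subset\rho(G_0)$ to show a Levi subgroup $L$ of $D$ lies in $\rho(N)$, deduces $D_0=\rho(N)\cdot R(D)$, pushes this through the projection $D_0\to D_0/R(D)$ to get $\rho(R(N))\subset R(D)$, and then applies Proposition \ref{P:rho-discompact}. You instead construct $N$ abstractly (the family of connected $\rho$-discompact subgroups is closed under taking joins, so a maximal-dimensional member is the unique maximum), then close the loop by proving the structural lemma that a Zariski closed, \emph{Zariski connected} normal subgroup of an $\RR$-discompact group is $\RR$-discompact, applied to $\bM=\overline{\overline{\rho(K)}}\lhd R_d(\overline{\overline{\rho(G_0)}})$. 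Both work; your structural lemma is arguably a cleaner conceptual tool and might be worth isolating, whereas the paper's argument avoids introducing it at the cost of a more hands-on Levi computation. Two small points to tighten: (i) as stated, your lemma is false without the connectedness hypothesis (the center $\{\pm 1\}\subset\SL_2$ is a Zariski closed normal subgroup that is not discompact); you do use it only for $\bM$, which is Zariski connected since $K$ is Hausdorff connected, but you should say so explicitly since Proposition \ref{P:discompact} requires connectedness. (ii) The identity $R(\bM)=\bM\cap R(R_d(\bG_0))$ should be $R(\bM)=(\bM\cap R(R_d(\bG_0)))_0$; this does not affect the conclusion, since all you need is $R(\bM)\subset R(R_d(\bG_0))$ for the $\RR$-splitness, and the semisimple quotient $\bM/R(\bM)$ is then isogenous (finite central kernel) to its image in $R_d(\bG_0)/R(R_d(\bG_0))$, which preserves $\RR$-isotropy of the almost simple factors.
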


\begin{proof}
Denote $D=R_d\left(\overline{\overline{\rho(G_0)}}\right)$. We prove
that the group $N=\rho^{-1}(D)_0$ satisfies the requirement. We
first show that $N$ is $\rho$-discompact. Let $L$ be a Levi subgroup
of $D$. By Corollary \ref{C:discompact}, $R(D)$ is $\RR$-split and
$L$ has no compact factors. By \cite[Cor. 4.6.7]{Mo}, we have
$L=(L,L)\subset\left(\overline{\overline{\rho(G_0)}},\overline{\overline{\rho(G_0)}}\right)\subset\rho(G_0)$.
Thus
\begin{equation}\label{E:discompact-radical}
L\subset(\rho(G)\cap D)_0=\rho(\rho^{-1}(D))_0=\rho(N).
\end{equation}
Note that $\rho(N)\subset D_0$. So $D_0=L\cdot R(D)=\rho(N)\cdot
R(D)$. This implies that the quotient homomorphism $\varphi:D_0\to
D_0/R(D)$ maps $\rho(N)$ onto $D_0/R(D)$. So the connected solvable
Lie subgroup $\varphi(R(\rho(N)))$ of $D_0/R(D)$ is normal, hence
must be trivial. Thus
$\rho(R(N))=R(\rho(N))\subset\ker(\varphi)=R(D)$. Since $R(D)$ is
$\RR$-split, all elements in $\rho(R(N))$ have only real
eigenvalues. On the other hand, \eqref{E:discompact-radical} also
implies that $L$ is a Levi subgroup of $\rho(N)$. Hence any Levi
subgroup of $\rho(N)$ has no compact factors. By Proposition
\ref{P:rho-discompact}, $N$ is $\rho$-discompact.

Now we prove that $N$ contains every connected $\rho$-discompact Lie
subgroup $H$ of $G$. Since
$\overline{\overline{\rho(H)}}\subset\overline{\overline{\rho(G_0)}}$
is discompact, we have $\overline{\overline{\rho(H)}}\subset D$. So
$H\subset\rho^{-1}(D)_0=N$. Hence $N$ is the unique maximal
connected $\rho$-discompact Lie subgroup of $G$.

Obviously, $N$ is closed in $G$. For any $g\in G$,
$\overline{\overline{\rho(gNg^{-1})}}=\rho(g)\overline{\overline{\rho(N)}}\rho(g)^{-1}$
is discompact. So $gNg^{-1}$ is $\rho$-discompact, and hence is
contained in $N$. Thus $N$ is normal in $G$. Consider the natural
homomorphisms
$$G_0/N\to G_0/(\rho^{-1}(D)\cap G_0)\to
\rho(G_0)/(D\cap\rho(G_0))\to\overline{\overline{\rho(G_0)}}/D$$ of
Lie groups. The first is a covering homomorphism, the second is an
isomorphism, and the third is injective. So $\LL(G_0/N)$ is
isomorphic to a subalgebra of
$\LL\left(\overline{\overline{\rho(G_0)}}/D\right)$. Since
$\overline{\overline{\rho(G_0)}}/D$ is compact, $\LL(G_0/N)$ is
compact. Thus $G/N$ is locally isomorphic to a compact Lie group.
This completes the proof.
\end{proof}

We refer to the group $N$ in Proposition \ref{P:discompact-radical}
as the \emph{discompact radical of $G$ with respect to $\rho$} (or
the \emph{$\rho$-discompact radical} of $G$), and denote it by
$R^\rho_d(G)$.

\begin{remark}
\begin{itemize}
\item[(1)] The quotient group $G/R^\rho_d(G)$ may be noncompact.
Here is an example for the adjoint representation. Let
$G=\RR\ltimes\CC^2$, where $\RR$ acts on $\CC^2$ by
$t.(z_1,z_2)=(e^{i\alpha t}z_1,e^{i\beta t}z_2)$, here
$\alpha,\beta\in\RR$ are linearly independent over $\QQ$. It is easy
to see that $R^\Ad_d(G)=\{0\}\times\CC^2$. So
$G/R^\Ad_d(G)\cong\RR$.
\item[(2)] Similar to the proof of Corollary \ref{C:Ad-discompact},
one can show that for any Lie group $G$, $R^\Ad_d(G)$ is the unique
maximal member among all the connected normal Lie subgroups $N$ of
$G$ such that $R(N)$ is split solvable and $N/R(N)$ has no compact
factors. Since we do not need this fact in this paper, we leave the
details to the reader.
\end{itemize}
\end{remark}

\section{Borel density theorem for algebraic
quotients}\label{S:Borel}

The original form of the Borel density theorem asserts that any
lattice in a real semisimple algebraic group without compact factors is Zariski dense
(\cite{Bo60}). This was extended by several authors and played a
crucial role in the ergodic theory of group actions (see
\cite{Sh,Zi84} and the references therein). One version of the
theorem that we will need below is as follows (\cite[Thms. 3.9 and
3.3]{Sh}).

\begin{theorem}\label{T:Shalom}
Let $\KK$ be a locally compact non-discrete field, $\bG$ be a
$\KK$-discompact $\KK$-group, and $\bK$ be a $\KK$-subgroup of
$\bG$. Suppose that there exists a $\bG_\KK$-invariant finite Borel
measure on $\bG_\KK/\bK_\KK$. Then $\bK=\bG$.
\end{theorem}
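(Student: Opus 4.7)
The plan is a Furstenberg dynamical argument on a projective embedding of $\bG/\bK$, with the $\KK$-discompactness hypothesis closing the argument at the end.

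First, by Chevalley's theorem over $\KK$, produce a finite-dimensional $\KK$-representation $\rho\colon\bG\to\GL(\bV)$ and a $\KK$-rational line $[v_0]\in\mathbb{P}(\bV)_\KK$ whose stabilizer in $\bG$ is exactly $\bK$. This gives a $\bG_\KK$-equivariant injection $\iota\colon\bG_\KK/\bK_\KK\hookrightarrow\mathbb{P}(\bV)_\KK$, along which the given finite $\bG_\KK$-invariant Borel measure pushes forward to a finite $\bG_\KK$-invariant Borel measure $\nu$ on $\mathbb{P}(\bV)_\KK$ supported on $\rho(\bG_\KK)[v_0]$.

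Next, invoke the projective Furstenberg dichotomy over $\KK$: the image $\rho(\bG_\KK)$ in $\mathrm{PGL}(\bV_\KK)$ either (a) has precompact closure, or (b) preserves some proper projective $\KK$-subspace. In case (a), the Zariski closure of $\rho(\bG)$ in $\mathrm{PGL}(\bV)$ is a $\KK$-compact $\KK$-group; by $\KK$-discompactness of $\bG$ the induced $\KK$-morphism $\bG\to\mathrm{PGL}(\bV)$ must be trivial, so $\rho(\bG)$ consists of scalars, forcing $\bG$ to fix $[v_0]$ and hence $\bG=\bK$. In case (b), let $\mathbb{P}(\bV')\subsetneq\mathbb{P}(\bV)$ be such an invariant proper subspace and decompose $\nu=\nu_1+\nu_2$, where $\nu_1$ is the restriction of $\nu$ to $\mathbb{P}(\bV')_\KK$ and $\nu_2$ is supported off it; both are $\bG_\KK$-invariant. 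If $\nu_2\neq 0$, push it forward to $\mathbb{P}(\bV/\bV')_\KK$ to obtain a finite $\bG_\KK$-invariant measure on a lower-dimensional projective space; if $\nu_2=0$, $\nu$ restricts to $\mathbb{P}(\bV')_\KK$. Either way, $\dim$ strictly drops, and we re-apply the dichotomy.

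The induction terminates after at most $\dim\bV$ steps, eventually landing in case (a) — the base case being $\dim=1$, where $\rho(\bG_\KK)$ acts trivially on $\mathbb{P}(\bV)_\KK$ automatically — yielding $\bG=\bK$. The main obstacle is establishing the projective Furstenberg dichotomy uniformly across all locally compact non-discrete $\KK$. Over $\RR$ and $\CC$ it is Furstenberg's classical lemma, proved by averaging rank-one projections; over non-archimedean $\KK$ one instead uses a Cartan-type decomposition to argue that any sequence in $\mathrm{PGL}(\bV_\KK)$ with unbounded spectral radii accumulates on rank-deficient operators and therefore drives any preserved finite measure into a proper $\KK$-projective subspace, which is the key dynamical input needed for the dichotomy.
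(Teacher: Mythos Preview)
The paper does not prove this theorem; it is quoted from Shalom \cite{Sh} (Theorems 3.9 and 3.3 there), so there is no in-paper argument to compare against. Your overall strategy---Chevalley embedding followed by a Furstenberg-type dichotomy on projective space, with $\KK$-discompactness disposing of the compact alternative---is the standard route and is essentially what underlies Shalom's proof.

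That said, your induction has a genuine gap in the $\nu_2\neq 0$ branch. When you push $\nu_2$ forward to $\mathbb{P}(\bV/\bV')_\KK$ and ``re-apply the dichotomy'', suppose you eventually land in case (a) on this quotient: you conclude only that $\bG$ acts by scalars on $\bV/\bV'$. This does not say that $\bG$ fixes the original $[v_0]\in\mathbb{P}(\bV)$---only its image in the quotient, whose stabilizer may strictly contain $\bK$---so you cannot deduce $\bG=\bK$ from it. The cleanest repair is to note that this branch never actually arises. Since $\bG_\KK$ acts transitively on $\bG_\KK/\bK_\KK$, the pushforward $\nu$ has full support on the orbit $\rho(\bG_\KK)[v_0]$. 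The Furstenberg lemma, correctly stated, asserts that $\supp(\nu)$ lies in a finite union of proper subspaces (not merely that some proper subspace is preserved); hence $[v_0]$ itself lies in one of these subspaces, and by Zariski-connectedness of $\bG$ that subspace is $\bG$-invariant. Taking $\bV'$ to be \emph{that} subspace forces $[v_0]\in\mathbb{P}(\bV')$ and $\nu_2=0$, and the induction descends entirely within subspaces containing $[v_0]$, with stabilizer equal to $\bK$ at every stage. Alternatively, one may run the same induction to prove the stronger assertion that every $\bG_\KK$-invariant finite measure on $\mathbb{P}(\bV)_\KK$ is supported on $\bG$-fixed points, which is closer to Shalom's own formulation and immediately gives $\bG=\bK$.
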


In what follows we assume that $\mathrm{char}\,\KK=0$, and consider
certain action of $\bG_\KK$ on an algebraic quotient
$\bV_\KK/\bH_\KK$, where $\bG$, $\bH$ are $\KK$-groups and $\bV$ is
a $\KK$-variety. More precisely, let $\bG\times\bH$ act
$\KK$-regularly $\bV$. Then $\bG_\KK$ acts naturally on
$\bV_\KK/\bH_\KK$. We endow $\bV_\KK/\bH_\KK$ with the quotient
topology and quotient Borel structure. Then the $\bG_\KK$-action on
$\bV_\KK/\bH_\KK$ is both continuous and measurable.

Recall that a Borel space is \emph{countably separated} if there
exists a countable family of Borel sets which separates points, and
is \emph{standard} if it is isomorphic to a Borel subset of a
complete separate metric space. Obviously, standard Borel spaces are
countably separated. We first list some basic properties of the
topology and Borel structure on $\bV_\KK/\bH_\KK$.

\begin{lemma}\label{L:quotient1}
\begin{itemize}
\item[(1)] For every $x\in\bV_\KK/\bH_\KK$, $\{x\}$ is locally
closed.
\item[(2)] $\bV_\KK/\bH_\KK$ is a standard
Borel space.
\item[(3)] The quotient Borel structure on $\bV_\KK/\bH_\KK$
coincides with the Borel structure generated by the quotient
topology on $\bV_\KK/\bH_\KK$.
\end{itemize}
\end{lemma}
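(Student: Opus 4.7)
The plan for (1) is to use that $\bH$-orbits on $\bV$ are Zariski-locally closed (a classical theorem of Chevalley; see \cite{Bo91}) and then descend to $\bH_\KK$-orbits on $\bV_\KK$. Passing to $\KK$-rational points, $(\bH v)_\KK$ is locally closed in $\bV_\KK$ in the Hausdorff topology. The orbit $\bH_\KK v$ is open inside $(\bH v)_\KK$: this follows from factoring the orbit map through the locally compact coset space $\bH_\KK/\bH_\KK(v)$ and applying the implicit function theorem in the form valid over any local field of characteristic zero, which makes the orbit map a topological embedding with open image. Combining these two facts, $\bH_\KK v$ is locally closed in $\bV_\KK$. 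Since the preimage in $\bV_\KK$ of the singleton $\{x\}\subset\bV_\KK/\bH_\KK$ is exactly $\bH_\KK v$, and the quotient topology is defined so that a set is open iff its preimage is, $\{x\}$ is locally closed in the quotient.

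For (2), I would appeal to Effros's theorem: a continuous action of a Polish group on a Polish space with all orbits locally closed yields a standard quotient Borel structure. Here $\bV_\KK$ is Polish (second countable, locally compact Hausdorff), $\bH_\KK$ is a Polish group acting continuously, and (1) supplies the orbit condition. For (3), the inclusion of the Borel structure generated by the quotient topology into the quotient Borel structure is immediate from continuity of $\pi\colon\bV_\KK\to\bV_\KK/\bH_\KK$. For the reverse inclusion, I would prove by transfinite induction on the Borel hierarchy the claim: whenever $B\subseteq\bV_\KK$ is Borel and $\bH_\KK$-invariant, $\pi(B)$ is Borel with respect to the quotient topology. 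The base case (open $B$) is the definition of the quotient topology; passage to complements uses $\pi(B^c)=\pi(B)^c$ for invariant $B$; countable unions and intersections are handled using that $\pi$ commutes with both on the $\sigma$-algebra of invariant Borel sets (the intersection case requiring invariance: if the orbit of $x$ meets every $B_n$ then by invariance it is contained in every $B_n$). Since every $A$ in the quotient Borel structure equals $\pi(\pi^{-1}(A))$ with $\pi^{-1}(A)$ Borel and automatically $\bH_\KK$-invariant, the two Borel structures agree.

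The main obstacle is part (1), specifically the openness of $\bH_\KK v$ inside $(\bH v)_\KK$, which must be established uniformly across the archimedean and non-archimedean local fields of characteristic zero via the appropriate form of the implicit function theorem (or, equivalently, by invoking the submersion theorem for $\KK$-analytic maps). Once this is in hand, (2) is a direct citation of Effros, and (3) is a clean transfinite-induction argument that uses nothing beyond invariance and the set-theoretic identities for $\pi$ on invariant sets.
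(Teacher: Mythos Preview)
Your treatment of (1) and (2) matches the paper's approach: the paper simply asserts that (1) follows from local closedness of $\bH_\KK$-orbits in $\bV_\KK$ and cites Effros \cite[Thm.~2.9]{Ef} for (2), so your argument supplies the details the paper omits. One small point in (1): the last step, deducing that $\{x\}$ is locally closed in the quotient from the fact that its preimage is locally closed, does not follow just from ``a set is open iff its preimage is open.'' You need the extra observation that the quotient map is \emph{open} (since $\bH_\KK$ acts by homeomorphisms), so that $\bH_\KK v$, being open in its saturated closure $\overline{\bH_\KK v}$, pushes down to a set open in the closed set $\pi(\overline{\bH_\KK v})$.

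Your argument for (3), however, has a genuine gap. You propose to prove by transfinite induction on the Borel rank of $B$ that every invariant Borel $B\subset\bV_\KK$ has $\pi(B)$ Borel for the quotient topology. The problem is that an invariant Borel set $B$ at level $\alpha$ is built as $B=\bigcup_n B_n$ or $\bigcap_n B_n$ from sets $B_n$ at lower levels, but these $B_n$ are \emph{not} in general $\bH_\KK$-invariant, so the induction hypothesis does not apply to them. Your remark that ``$\pi$ commutes with countable intersections on invariant sets'' is correct, but irrelevant unless the $B_n$ themselves are invariant, which you have no way to arrange without raising their Borel rank (saturating may increase complexity, and intersecting over an uncountable group is not a countable operation). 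In effect, what you are trying to prove is that the $\sigma$-algebra generated by invariant open sets equals the $\sigma$-algebra of all invariant Borel sets; this is exactly the nontrivial content of (3), and it fails for general continuous actions (e.g., an irrational flow on the torus). The paper handles (3) by citing Effros \cite[Thm.~2.6]{Ef}, which establishes this coincidence under the hypothesis---supplied by your part (1)---that all orbits are locally closed; the proof there uses descriptive-set-theoretic tools (Borel cross-sections, analyticity of the quotient) rather than a direct hierarchy induction.
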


\begin{proof}
(1) follows from the local closedness of the $\bH_\KK$-orbits in
$\bV_\KK$. (2) follows from \cite[Thm. 2.9]{Ef} (see also \cite[Thm.
A.7]{Zi84}). (3) follows from \cite[Thm. 2.6]{Ef}.
\end{proof}

Recall that a measurable action of an lcsc group $G$ on a countably
separated Borel space $X$ is \emph{tame} if the quotient Borel
structure on $X/G$ is countably separated. It is well-known the
$(\bG_\KK\times\bH_\KK)$-action on $\bV_\KK$ is tame.

\begin{lemma}\label{L:quotient2}
\begin{itemize}
\item[(1)] The $\bG_\KK$-action on $\bV_\KK/\bH_\KK$ is tame.
\item[(2)] For every $x\in\bV_\KK/\bH_\KK$, there exists a
$\KK$-subgroup $\bK$ of $\bG$ such that $\bG_\KK(x)$ is an open
subgroup of finite index in $\bK_\KK$.
\end{itemize}
\end{lemma}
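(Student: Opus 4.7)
For (2), the plan is to realize $\bG_\KK(x)$ as the image of the first projection applied to an algebraic stabilizer, then invoke a standard fact about $\KK$-morphisms. Choose a preimage $v\in\bV_\KK$ of $x$ under the quotient map $q\colon\bV_\KK\to\bV_\KK/\bH_\KK$, let $\mathbf{M}=(\bG\times\bH)(v)$, which is a $\KK$-subgroup of $\bG\times\bH$, and let $\bK=p_1(\mathbf{M})$, where $p_1\colon\bG\times\bH\to\bG$ is the first projection; then $\bK$ is a $\KK$-subgroup of $\bG$. Since the $\bG$- and $\bH$-actions on $\bV$ commute, the chain of equivalences ``$g\cdot q(v)=q(v)\iff g\cdot v\in\bH_\KK v\iff(g,h)\cdot v=v$ for some $h\in\bH_\KK$'' gives
\[
\bG_\KK(x)=p_1\bigl(\mathbf{M}\cap(\bG_\KK\times\bH_\KK)\bigr)=p_1(\mathbf{M}_\KK).
\]
The restriction $p_1|_\mathbf{M}\colon\mathbf{M}\twoheadrightarrow\bK$ is a surjective $\KK$-morphism of $\KK$-groups, and over a local field of characteristic zero such a morphism is known to induce a homomorphism on $\KK$-points whose image is open and of finite index in the $\KK$-points of the target. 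Applied to $p_1|_\mathbf{M}$, this yields the claim.

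For (1), the plan is to push tameness down from $\bV_\KK$ along $q$. The $(\bG_\KK\times\bH_\KK)$-action on $\bV_\KK$ is tame: $(\bG\times\bH)$-orbits in $\bV$ are Zariski locally closed, so their $\KK$-points are locally closed in the Hausdorff topology on $\bV_\KK$, and tameness of the orbit equivalence relation then follows from Effros' theorem. Equivalently, there exists a Borel map $f\colon\bV_\KK\to Y$ into a standard Borel space whose fibers are precisely the $(\bG_\KK\times\bH_\KK)$-orbits. Since $f$ is $\bH_\KK$-invariant, Lemma~\ref{L:quotient1}(3) allows us to factor it as $f=\bar f\circ q$ for some Borel $\bar f\colon\bV_\KK/\bH_\KK\to Y$. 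The commuting of the two actions makes the fibers of $\bar f$ coincide with the $\bG_\KK$-orbits in $\bV_\KK/\bH_\KK$, so the orbit equivalence relation on $\bV_\KK/\bH_\KK$ is smooth, which is the assertion of tameness.

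The main technical input, and essentially the only place where the hypothesis $\mathrm{char}\,\KK=0$ enters, is the statement invoked in (2): a surjective $\KK$-morphism of $\KK$-groups over a local field of characteristic zero induces an open and finite-index homomorphism on $\KK$-points. Openness follows from the implicit function theorem on the analytic structure of the $\KK$-points, while the finite-index statement reduces to the finiteness of $H^1(\KK,-)$ for linear algebraic groups over such $\KK$, by classical results of Borel--Serre and Kneser. Everything else is bookkeeping exploiting the commuting $\bG$- and $\bH$-actions on $\bV$.
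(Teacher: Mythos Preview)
Your proof is correct and follows essentially the same route as the paper: for (2) you take a preimage $v$, form the algebraic stabilizer in $\bG\times\bH$, project to $\bG$, and invoke the open-finite-index property of images on $\KK$-points (the paper cites Platonov--Rapinchuk for this); for (1) you reduce tameness of the $\bG_\KK$-action on $\bV_\KK/\bH_\KK$ to tameness of the $(\bG_\KK\times\bH_\KK)$-action on $\bV_\KK$, which the paper phrases more tersely as a Borel isomorphism $(\bV_\KK/\bH_\KK)/\bG_\KK\cong\bV_\KK/(\bG_\KK\times\bH_\KK)$. One minor correction to your closing commentary: characteristic zero is also what guarantees that the stabilizer $\mathbf{M}$ and its image $\bK=p_1(\mathbf{M})$ are $\KK$-subgroups in the first place, not only the open-finite-index statement.
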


\begin{proof}
(1) Since $(\bV_\KK/\bH_\KK)/\bG_\KK$ is isomorphic to
$\bV_\KK/(\bG_\KK\times\bH_\KK)$ as Borel spaces, the tameness of
the $\bG_\KK$-action on $\bV_\KK/\bH_\KK$ follows from the tameness
of the $(\bG_\KK\times\bH_\KK)$-action on $\bV_\KK$.

(2) Let $v\in x$, and denote $\bS=(\bG\times\bH)(v)$. Since
$\mathrm{char}\,\KK=0$, $\bS$ is a $\KK$-subgroup of $\bG\times\bH$.
Let $p_1:\bG\times\bH\to\bG$ be the projection. It is easy to see
that $\bG_\KK(x)=p_1(\bS_\KK)$. Let $\bK=p_1(\bS)$. Then $\bK$ is a
$\KK$-subgroup of $\bG$, and $p_1(\bS_\KK)$ is an open subgroup of
finite index in $\bK_\KK$ (see \cite[p. 113, Cor. 1 and p. 319, Cor.
2]{PR}). Hence the result follows.
\end{proof}

Now we prove the Borel density theorem for algebraic quotients
(Theorem \ref{T:Borel}).

\begin{theorem}\label{T:Borelrestate}
Let $\KK$ be a locally compact non-discrete field of characteristic
$0$, $\bG$ and $\bH$ be $\KK$-groups such that $\bG$ is
$\KK$-discompact, and $\bV$ be a $\KK$-variety such that
$\bV_\KK\ne\emptyset$. Suppose that $\bG\times\bH$ acts
$\KK$-regularly on $\bV$. Let $\mu$ be a finite Borel measure on
$\bV_\KK/\bH_\KK$ such that either
\begin{itemize}
\item[(1)] $\mu$ is $\bG_\KK$-invariant, or
\item[(2)] $\KK=\RR$ and $\mu$ is invariant under a Zariski dense Lie subgroup $G$ of $\bG_\RR$.
\end{itemize}
Then for $\mu$-a.e. $x\in\bV_\KK/\bH_\KK$, $\bG_\KK(x)$ is an open
subgroup of finite index in $\bG_\KK$.
\end{theorem}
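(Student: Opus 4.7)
The plan is to use the tameness of the $\bG_\KK$-action on $\bV_\KK/\bH_\KK$ (Lemma \ref{L:quotient2}(1)) together with Lemma \ref{L:quotient2}(2) to reduce both cases to the classical Borel density theorem (Theorem \ref{T:Shalom}). The key idea is to decompose $\mu$ into fiber measures on individual $\bG_\KK$-orbits, identify each such orbit with an algebraic coset space $\bG_\KK/\bK_\KK$ via the lemma, and then invoke $\KK$-discompactness to force $\bK=\bG$.

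For case (1), since the $\bG_\KK$-action is tame and $\mu$ is $\bG_\KK$-invariant, I would apply the ergodic decomposition theorem to write $\mu=\int_E\mu_e\,d\lambda(e)$ as an integral of ergodic $\bG_\KK$-invariant probability measures. By tameness (equivalently, the countable separation of the orbit space by Lemma \ref{L:quotient1}), $\lambda$-a.e.\ ergodic component $\mu_e$ is supported on a single orbit $\bG_\KK\cdot x_e\cong\bG_\KK/\bG_\KK(x_e)$. Lemma \ref{L:quotient2}(2) then provides a $\KK$-subgroup $\bK=\bK(e)$ of $\bG$ in which $\bG_\KK(x_e)$ has finite index as an open subgroup; the induced finite-to-one $\bG_\KK$-equivariant surjection $\bG_\KK/\bG_\KK(x_e)\to\bG_\KK/\bK_\KK$ pushes $\mu_e$ forward to a $\bG_\KK$-invariant finite Borel measure on $\bG_\KK/\bK_\KK$. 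Since $\bG$ is $\KK$-discompact, Theorem \ref{T:Shalom} forces $\bK=\bG$, so $\bG_\KK(x_e)$ is open of finite index in $\bG_\KK$ for $\lambda$-a.e.\ $e$, giving the conclusion for $\mu$-a.e.\ $x$.

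For case (2), where $\KK=\RR$ and $\mu$ is only invariant under a Zariski dense Lie subgroup $G\subset\bG_\RR$, the $\bG_\RR$-action is still tame, so the orbit space $Q=(\bV_\RR/\bH_\RR)/\bG_\RR$ is a standard Borel space and the quotient map admits a disintegration $\mu=\int_Q\mu_q\,d\bar{\mu}(q)$ with $\mu_q$ a probability measure supported on the $\bG_\RR$-orbit $\pi^{-1}(q)$. Since the $\bG_\RR$-orbits are also $G$-invariant and $\mu$ is $G$-invariant, uniqueness of disintegration shows that $\bar{\mu}$-a.e.\ $\mu_q$ is $G$-invariant. Repeating the pushforward construction from case (1) yields a $G$-invariant finite Borel measure on $\bG_\RR/\bK_\RR$ for some $\RR$-subgroup $\bK=\bK(q)$. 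To finish, I would invoke a Zariski-dense strengthening of Theorem \ref{T:Shalom}: a finite Borel measure on $\bG_\RR/\bK_\RR$ that is invariant under a Zariski dense subgroup of $\bG_\RR$ forces $\bK=\bG$. This can be extracted from \cite{Sh}, or proved directly by embedding $\bG/\bK$ via Chevalley into a projective $\bG$-variety and applying a Furstenberg-type argument: a $G$-invariant probability measure on $\mathbb{P}(V)_\RR$, combined with Zariski density of $G$ in the $\RR$-discompact group $\bG_\RR$, forces the stabilizer of the measure class in $\bG$ to be all of $\bG$, which together with discompactness rules out $\bK\subsetneq\bG$.

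The main obstacle is case (2). In case (1), the ergodic decomposition on a tame action is textbook, and Theorem \ref{T:Shalom} applies essentially verbatim. Case (2) is delicate because (a) the disintegration must be controlled carefully enough to transfer $G$-invariance to almost every fiber, and (b) Theorem \ref{T:Shalom} in its stated form requires full $\bG_\RR$-invariance, so one needs the Zariski-dense version, whose verification hinges on a projective Borel-density argument exploiting the characterization of $\RR$-discompactness in Proposition \ref{P:discompact} (no proper $\KK$-cocompact normal $\KK$-subgroups). Once the Zariski-dense version is in hand, the two cases fit into a single framework.
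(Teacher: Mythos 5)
Your proposal reaches the same conclusion and follows the same broad outline (decompose $\mu$, localize to a single $\bG_\KK$-orbit via tameness, identify the orbit with an algebraic coset space through Lemma \ref{L:quotient2}(2), push forward, invoke Borel density), but case (2) is handled by a genuinely different device than the paper. You propose to disintegrate $\mu$ directly over the $\bG_\RR$-orbit space, transfer the $G$-invariance to a.e.\ fiber by uniqueness of disintegration, and then finish with a \emph{Zariski-dense strengthening} of Theorem \ref{T:Shalom}, which you sketch via a Chevalley embedding and a Furstenberg-type measure argument. The paper instead unifies both cases by setting $G=\bG_\KK$ in case (1) and letting $G$ be the given Zariski-dense Lie subgroup in case (2), runs a $G$-ergodic decomposition (Varadarajan), uses Shalom's tameness lemma to see that a $G$-ergodic measure on the tame $\bG_\KK$-space lives on a single $\bG_\KK$-orbit, and then --- and this is the key step you replace --- applies Dani's result \cite[Cor.~2.6]{Da} on the algebraic coset space $\bG_\RR/\bK_\RR$ to upgrade $G$-invariance of the pushed-forward measure to full $\bG_\RR$-invariance, after which Theorem \ref{T:Shalom} applies verbatim. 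So where you would re-prove a Zariski-dense version of the Borel density theorem from scratch, the paper localizes the difficulty to an algebraic homogeneous space and imports Dani's upgrade result there, which is cleaner and avoids redoing the projective argument. Your Furstenberg sketch is correct in spirit and would work, but as written it is the only hand-waved step in your proposal; a reader would either need you to carry out that argument or to locate the precise statement in \cite{Sh} or \cite{Da}. One small technical point in your disintegration step: passing from ``for each $g\in G$, $g_*\mu_q=\mu_q$ for $\bar\mu$-a.e.\ $q$'' to ``for $\bar\mu$-a.e.\ $q$, $\mu_q$ is $G$-invariant'' uses second countability of $G$ and continuity of the action; this is standard but worth flagging since you rely on it.
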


\begin{proof}
If $\KK\ne\RR$, we let $G=\bG_\KK$. Since $\bV_\KK/\bH_\KK$ is a
standard Borel $G$-space and $G$ is lcsc (if $\KK=\RR$ we consider
the Lie group topology on $G$), by \cite[Thm. 4.4]{Va63}, the
$G$-invariant measure $\mu$ admits an ergodic decomposition. This
means that there exists a map $x\mt\beta_x$ from $\bV_\KK/\bH_\KK$
to the space of $G$-invariant ergodic probability measures on
$\bV_\KK/\bH_\KK$ such that for any Borel set
$E\subset\bV_\KK/\bH_\KK$, the function $x\mt\beta_x(E)$ is
measurable and we have $\mu(E)=\int_X\beta_x(E)d\mu(x)$. So we may
assume without loss of generality that $\mu$ is $G$-ergodic. Since
the $\bG_\KK$-action on $\bV_\KK/\bH_\KK$ is tame, by \cite[Lem.
2.2]{Sh}, $\mu$ is supported on a single $\bG_\KK$-orbit, say
$\mathcal{O}\subset\bV_\KK/\bH_\KK$. It suffices to prove that for
every $x\in\mathcal{O}$, $\bG_\KK(x)$ is an open subgroup of finite
index in $\bG_\KK$.

Let $x\in\mathcal{O}$. By Lemma \ref{L:quotient2}(2), there exists a
$\KK$-subgroup $\bK$ of $\bG$ such that $\bG_\KK(x)$ is an open
subgroup of finite index in $\bK_\KK$. In particular, $\bG_\KK(x)$
is closed in $\bG_\KK$. So $\bG_\KK/\bG_\KK(x)$ is a standard Borel
space. By \cite[Thm. A.4]{Zi84}, $\bG_\KK/\bG_\KK(x)$ is isomorphic
to $\mathcal{O}$ as Borel $\bG_\KK$-spaces. So we may view $\mu$ as
a $G$-invariant finite Borel measure on $\bG_\KK/\bG_\KK(x)$. Let
$p:\bG_\KK/\bG_\KK(x)\to\bG_\KK/\bK_\KK$ be the natural projection.
Then $p_*(\mu)$ is a $G$-invariant finite Borel measure on
$\bG_\KK/\bK_\KK$. We claim that $p_*(\mu)$ is $\bG_\KK$-invariant.
Indeed, if $\KK\ne\RR$, this has been assumed as a condition. If
$\KK=\RR$, since $G$ is Zariski dense in $\bG_\RR$, the
$\bG_\RR$-invariance of $p_*(\mu)$ follows from \cite[Cor. 2.6]{Da}.
Now by Theorem \ref{T:Shalom}, we have $\bK=\bG$. Thus $\bG_\KK(x)$
is an open subgroup of finite index in $\bG_\KK$. This completes the
proof.
\end{proof}

\begin{remark}\label{R:counterexample}
Under the conditions of Theorem \ref{T:Borelrestate}, even if
$\KK=\RR$, $\bG_\RR$ may have no fixed point in $\bV_\RR/\bH_\RR$.
For example, let $\bG=\bH=\bV=\CC\setminus\{0\}$ be the
$1$-dimensional $\RR$-split torus, and let $\bG\times\bH$ act on
$\bV$ by $(g,h).v=gh^2v$. Then $\bV_\RR/\bH_\RR$ consists of two
points, each of them has stabilizer $(0,+\infty)$, which is not an
algebraic subgroup of $\bG_\RR$. Recall that by Rosenlicht's theorem
\cite{Ro}, $\bV_\RR/\bH_\RR$ is a finite disjoint union
$\bigcup_iV_i$ of real algebraic varieties. The above example shows
that even if each $V_i$ can be chosen to be $\bG_\RR$-invariant, the
$\bG_\RR$-action on $V_i$ may fail to be algebraic (for otherwise
the stabilizers would be algebraic subgroups of $\bG_\RR$).
\end{remark}

For the convenience of later applications, we propose the following
corollary of Theorem \ref{T:Borelrestate}.

\begin{corollary}\label{C:Borel}
Let $G$ be a connected Lie group, $\rho:G\to\GL(\cV)$ be a
representation of $G$ in a real vector space $\cV$ such that $G$ is
$\rho$-discompact, $H$ be a real algebraic group, and $V$ be a real
algebraic variety. Suppose that $\GL(\cV)\times H$ acts regularly on
$V$, and that the induced $G$-action on $V/H$ preserves a finite
Borel measure $\mu$ on $V/H$. Then $(V/H)^G$ is a $\mu$-conull
$G$-saturated constructible subset of $V/H$.
\end{corollary}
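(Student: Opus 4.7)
The plan is to derive the $\mu$-conull statement from Theorem \ref{T:Borelrestate} and the structural statements by mimicking the proof of Lemma \ref{L:Fconstructible}. First, let $\bG := \overline{\overline{\rho(G)}}$, viewed as a real algebraic subgroup of $\bGL(\cV)$. The hypothesis that $G$ is $\rho$-discompact is exactly that $\bG$ is $\RR$-discompact, and in particular $\bG$ is Zariski connected. Restricting the regular $\bGL(\cV) \times \bH$-action on $\bV$ gives a regular action of $\bG \times \bH$ on $\bV$, while $\rho(G)$ is a Zariski dense, connected Lie subgroup of $\bG_\RR$ preserving $\mu$ on $V/H = \bV_\RR/\bH_\RR$. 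Theorem \ref{T:Borelrestate}(2) then produces a $\mu$-conull Borel set $X \subset V/H$ on which $\bG_\RR(x)$ is an open subgroup of finite index in $\bG_\RR$. Since any open subgroup of a Lie group contains the identity component and $\rho(G)$ is connected,
\[
\rho(G) \;\subset\; (\bG_\RR)_0 \;\subset\; \bG_\RR(x) \qquad (x \in X),
\]
so each such $x$ is fixed by all of $G$. This proves that $(V/H)^G$ is $\mu$-conull.

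For $G$-saturation and constructibility I plan to adapt the proof of Lemma \ref{L:Fconstructible}. Write $K = G \times H$ and let $p: V \to V/H$ be the quotient map; then $p^{-1}((V/H)^G) = P := \{v \in V : Kv = Hv\}$. Set $U_i := \{v : \dim Hv \ge i\}$ and $W_i := \{v : \dim Kv \le i\}$. These sets are $K$-invariant, $U_i$ is Hausdorff-open, and $W_i$ is Hausdorff-closed, by semi-continuity of orbit dimensions for the regular $\bG \times \bH$-action. The elementary dimension computation of Lemma \ref{L:Fconstructible}, using $\dim p_1(K(v)) \ge \dim G$ and the connectedness of $G$, then yields $P = \bigcup_{i=0}^{\dim H} U_i \cap W_i$. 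Because each $U_i$ and $W_i$ is $H$-invariant, descending to the quotient gives
\[
(V/H)^G \;=\; \bigcup_{i=0}^{\dim H} p(U_i) \cap p(W_i),
\]
a finite union of $G$-invariant locally closed subsets of $V/H$. Constructibility is then immediate, and Lemmas \ref{L:saturated} and \ref{L:boolean} deliver $G$-saturation.

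The main technical obstacle is the legitimacy of the second step: Lemma \ref{L:Fconstructible} is stated for smooth actions on smooth manifolds, whereas here $V$ is only a real algebraic variety (possibly singular) and the Lie subgroup $\rho(G) \subset \bG_\RR$ need not be algebraic. The openness of $U_i$ is immediate from the regularity of the $\bH$-action, but the closedness of $W_i$—upper semi-continuity of $\dim Kv$—must be argued from the algebraic structure of the $\bG \times \bH$-action, for example by stratifying $V$ into smooth $(\bG \times \bH)$-invariant loci and applying Lemma \ref{L:Fconstructible} on each stratum. Once the semi-continuities are in hand, the projection-dimension identity and the descent to the quotient transfer from Lemma \ref{L:Fconstructible} verbatim.
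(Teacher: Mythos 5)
Your argument for $\mu$-conullness coincides with the paper's, and your constructibility argument---apply Lemma \ref{L:Fconstructible} after stratifying $V$ into smooth $(\bG\times\bH)$-invariant loci---is precisely the inductive argument the paper invokes (apply the lemma to the nonsingular locus and induct on the singular locus). Your worry that $\rho(G)$ need not be algebraic is not actually an obstacle on this route: Lemma \ref{L:Fconstructible} is a statement about Lie group actions on smooth manifolds, so once a smooth $(\bG\times\bH)$-invariant stratum is fixed, its semi-continuity machinery applies directly to the Lie group $K=G\times H$ acting through $\rho$; no algebraic semi-continuity argument on singular $V$ is needed. The one genuine divergence from the paper is your route to $G$-saturation. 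You deduce it as a byproduct of the constructibility decomposition (each $p(U_i)\cap p(W_i)$ is a $G$-invariant locally closed set, hence $G$-saturated by Lemma \ref{L:saturated}, and Lemma \ref{L:boolean} gives the union). The paper's route is cleaner and independent of constructibility: Lemma \ref{L:quotient1}(1) says every singleton in $V/H$ is locally closed, so each fixed-point singleton is a $G$-invariant locally closed set, hence $G$-saturated by Lemma \ref{L:saturated}, and $(V/H)^G$ is then $G$-saturated as a union of such singletons by Lemma \ref{L:boolean}. That argument is worth having at hand, since it rests on a clean structural fact about algebraic quotients rather than on the more delicate stratification step.
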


\begin{proof}
Theorem \ref{T:Borelrestate} implies that $\mu$-a.e. point in $V/H$
is fixed by some open subgroup of $\overline{\overline{\rho(G)}}$.
But an open subgroup of $\overline{\overline{\rho(G)}}$ must contain
$\rho(G)$. So $(V/H)^G$ is $\mu$-conull. By Lemmas
\ref{L:quotient1}(1) and \ref{L:saturated}, for every $x\in(V/H)^G$,
$\{x\}$ is $G$-saturated. So Lemma \ref{L:boolean} implies that
$(V/H)^G$ is $G$-saturated. The assertion that $(V/H)^G$ is
constructible follows from a standard inductive argument where we
apply Lemma \ref{L:Fconstructible} to the set of nonsingular points
in $V$.
\end{proof}

\section{Proof of Theorem \ref{T:main}}\label{S:proof}

In this section we prove Theorem \ref{T:main}. For convenience, we
restate the theorem as follows.

\begin{theorem}\label{T:mainrestate}
Let $M$ be a connected $n$-dimensional $C^\varepsilon$ manifold with
an $i$-rigid $C^\varepsilon$ $\A$-structure $\sigma$ of order $r$
and type $V$, where $i\ge r>0$, and let a connected Lie group $G$
act on $M$ by $C^\varepsilon$ isometries. Let $\mu$ be a finite
Borel measure on $M$, $\cV$ be a $G$-invariant subspace of
$\Kill(M)$, and $\cW$ be a $\Gamma$-invariant subspace of
$\Kill(\widetilde{M})^\cG$, where $\Gamma=\pi_1(M)$. Suppose that
$G$ is $\rho_\cV$-discompact and $\ker(\rho_\cV)\cdot G_\mu=G$. Then
there exists a $G$-saturated constructible set $M'\subset M_\reg$
with $\mu(M_\reg\backslash M')=0$ such that for every $x\in M'$, we
have
\begin{itemize}
\item[(1)] the $\Hull_\Gamma^\cW(\Iso^\germ(M)^\cV)$-orbit of $x$
contains a $G$-invariant open dense subset of $\overline{Gx}$, and
\item[(2)] $T_xGx\subset\ev_x(\Hull_\Gamma^\cW(\Kill^\germ_x(M)^\cV))$.
\end{itemize}
\end{theorem}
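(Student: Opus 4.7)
The plan is to execute the strategy sketched in the introduction: build an augmented continuous geometric structure on $M$ whose Gauss map lands in an algebraic quotient, apply the Borel density theorem for algebraic quotients (Corollary \ref{C:Borel}) to locate a $G$-saturated conull set on which the Gauss map is $\rho_\cV(G)$-fixed, and translate the resulting algebraic fixed-point condition into geometric information using the $k$-regularity of $M_\reg$ and the rigidity of $\sigma$.

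Concretely, I fix $s$ large enough that $r+s\ge k$, where $k$ is the regularity parameter of $M_\reg$ from Theorem \ref{T:Gromov1}(5), and that $s$-jets are injective on both $\cV$ and $\cW$ (Theorem \ref{T:Gromov1}(3)). I then assemble
$$
\tau:F^{r+s}(M)\longrightarrow W_s:=J^s_n(V)\times\mathrm{Hom}(\cV,J^s_n(\RR^n))\times\mathrm{Hom}(\cW,J^s_n(\RR^n)),
$$
sending $\beta\mapsto(\sigma^s(\beta),\,v\mapsto j^s_0(\beta^*v),\,w\mapsto j^s_0(\tilde\beta^*w))$, where $\tilde\beta$ is an arbitrary lift of $\beta$ to $F^{r+s}(\widetilde{M})$. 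The third component is well-defined only modulo the $\rho_\cW(\Gamma)$-action, so after passing to the Zariski closure $\overline{\overline{\rho_\cW(\Gamma)}}$ and also quotienting by the natural $\GL^{r+s}(n)$-action I obtain a continuous Gauss map
$$
\theta:M\longrightarrow V':=W_s\bigm/\bigl(\GL^{r+s}(n)\times\overline{\overline{\rho_\cW(\Gamma)}}\bigr).
$$
Since $\sigma$ is $G$-invariant and $\widetilde{G}$ acts trivially on $\cW\subset\Kill(\widetilde{M})^\cG$, the map $\theta$ is $G$-equivariant with $G$ acting on $V'$ through $\rho_\cV$, via the precomposition action of $\GL(\cV)$ on $\mathrm{Hom}(\cV,J^s_n(\RR^n))$.

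Next, the hypothesis $\ker(\rho_\cV)\cdot G_\mu=G$ forces $\theta_*\mu$ to be $\rho_\cV(G)$-invariant, so Corollary \ref{C:Borel} applied with $H=\GL^{r+s}(n)\times\overline{\overline{\rho_\cW(\Gamma)}}$ and $V=W_s$ guarantees that $(V')^{\rho_\cV(G)}$ is a $\theta_*\mu$-conull, $G$-saturated, constructible subset of $V'$. I set $M':=\theta^{-1}((V')^{\rho_\cV(G)})\cap M_\reg$: $G$-saturation follows from $G$-equivariance via Lemma \ref{L:inverseimage} together with Lemmas \ref{L:boolean} and \ref{L:saturated}, constructibility from the fact that continuous preimages of finite unions of locally closed sets remain such, and $\mu(M_\reg\setminus M')=0$ is immediate. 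For $x\in M'$ and $g\in G$, the equality $\rho_\cV(g)\cdot\theta(x)=\theta(x)$ in $V'$ lifts to the existence of $\alpha_g\in\GL^{r+s}(n)$ and $\gamma_g\in\overline{\overline{\rho_\cW(\Gamma)}}$ with $\tau(dg\cdot\beta\cdot\alpha_g)=\gamma_g\cdot\tau(\beta)$ for any $\beta\in F^{r+s}(M)_x$. The frame $\beta^\sharp:=dg\cdot\beta\cdot\alpha_g\in F^{r+s}(M)_{gx}$ then satisfies $\sigma^s(\beta^\sharp)=\sigma^s(\beta)$, hence determines an infinitesimal isometry of order $r+s$ from $x$ to $gx$; by $r+s\ge k$ it extends to a local isometry $\psi_g\in\Iso^\germ(M)$. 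The $\cV$- and $\cW$-components of $\tau(\beta^\sharp)=\gamma_g\cdot\tau(\beta)$ force $d\psi_g$ to preserve $s$-jets of elements of $\cV$ and to satisfy $d\psi_g\circ\lambda_{\tilde x}|_\cW=\lambda_{g\tilde x}\circ\gamma_g$, and injectivity of $s$-jets on $\cV$ and $\cW$ promotes these to germ-level identities, giving $\psi_g\in\Hull_\Gamma^\cW(\Iso^\germ(M)^\cV)$. Running the same argument with $gx$ replaced by any $y\in\theta^{-1}(\theta(x))\cap M_\reg$, and using that $\{\theta(x)\}$ is locally closed by Lemma \ref{L:quotient1}(1) to conclude that $\theta^{-1}(\theta(x))\cap\overline{Gx}$ is open in $\overline{Gx}$ (and dense because it contains $Gx$) and $G$-invariant (because $\theta(x)$ is $\rho_\cV(G)$-fixed), I obtain (1).

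Finally, (2) is derived by repeating the analysis at the Lie algebra level: the stabilizer of $\tau(\beta)$ inside the algebraic group $\cA:=\GL^{r+s}(n)\times\GL(\cV)\times\overline{\overline{\rho_\cW(\Gamma)}}$ is an algebraic subgroup $\bS$ whose image under $\cA\to\GL(\cV)$ contains $\rho_\cV(G)$, so $\LL(\bS)$ projects onto a subspace of $\Lgl(\cV)$ containing $\rho_\cV(\Lg)$. For every $X\in\Lg$, lifting $-\rho_\cV(X)$ to $\LL(\bS)$ yields a triple $(A_X,-\rho_\cV(X),B_X)$; the $\sigma^s$-triviality of $A_X$ turns it, via the prolongation machinery and the injectivity of $s$-jets on $\Kill^\germ_x(M)$, into the $(r+s)$-jet at $x$ of a local Killing field $\eta_X\in\Kill^\germ_x(M)$, the $\cV$- and $\cW$-components together with Lemmas \ref{L:exact} and \ref{L:L-hull} give $\eta_X\in\Hull_\Gamma^\cW(\Kill^\germ_x(M)^\cV)$, and coupling $A_X$ with $-\rho_\cV(X)$ ensures $\ev_x(\eta_X)=\iota_x(X)\in T_xGx$. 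The main obstacle I foresee is the careful handling of the $\cW$-component: one must use $\overline{\overline{\rho_\cW(\Gamma)}}$ (needed to produce an algebraic group for Corollary \ref{C:Borel}) while keeping $\theta$ continuous into the generally non-Hausdorff quotient topology, and arranging that the local isometries of step three actually land in the $\Gamma$-hull rather than in the full Zariski closure; verifying constructibility of $M'$ across a non-Hausdorff quotient, and extracting (2) at the Lie algebra level without having to differentiate an a priori only measurable family $g\mapsto\psi_g$, are similarly delicate.
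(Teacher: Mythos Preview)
Your overall strategy matches the paper's: assemble the prolongation $\sigma^{k-r}$ with the jet maps for $\cV$ and $\cW$, pass to the Gauss map $\theta$ into an algebraic quotient, apply Corollary~\ref{C:Borel}, and set $M'=\theta^{-1}(F)\cap M_\reg$.  There is, however, a genuine gap in your argument for~(1).

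You write that for $g\in G$ the infinitesimal isometry $\beta^\sharp\circ\beta^{-1}$ of order $r+s\ge k$ from $x$ to $gx$ ``extends to a local isometry $\psi_g\in\Iso^\germ(M)$'' by $k$-regularity.  But $k$-regularity at $x$ only asserts that $\Iso^\germ_{x,y}(M)\to\Iso^k_{x,y}(M)$ is surjective for $y$ in some \emph{neighborhood} $U_x$ of $x$; it gives nothing when $gx\notin U_x$.  The same objection applies to your claim for arbitrary $y\in\theta^{-1}(\theta(x))\cap M_\reg$.  The paper closes this gap by a connectedness argument: it passes to the connected component $C_x$ of $\theta^{-1}(\theta(x))\cap M_\reg$ containing $x$, shows that for every $\Hull_\Gamma^\cW(\Iso^\germ(M)^\cV)$-orbit $O$ the intersection $O\cap C_x$ is open in $C_x$ (this is precisely where the local extension is used, with both source $y$ and target $z$ in $C_x$ and $z\in U_y$), and concludes by connectedness that $C_x$ lies in a single orbit.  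The $G$-invariant open dense subset of $\overline{Gx}$ produced is $S_x=C_x\cap\overline{Gx}$, not the full fiber $\theta^{-1}(\theta(x))\cap\overline{Gx}$ as you propose (which need not be connected, hence need not lie in one orbit).

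Your sketch for~(2) also contains a slip.  The element $A_X\in\LL(\GL^{r+s}(n))$ corresponds, via $\beta$, to the jet of a vector field that \emph{vanishes} at $x$, so the Killing germ it produces (once the extension from jet to germ is justified---which does follow from $k$-regularity at $y=x$ combined with rigidity) has $\ev_x=0$, not $\iota_x(X)$.  What actually lands in $\Hull_\Gamma^\cW(\Kill^\germ_x(M)^\cV)$ with the correct evaluation is the \emph{difference} $\eta=\langle\iota(X)\rangle_x-\xi$, where $\xi$ is the germ coming from $A_X$: the $\cV$-component of the stabilizer condition yields $[\langle\iota(X)\rangle_x,\langle v\rangle_x]=[\xi,\langle v\rangle_x]$ for all $v\in\cV$, so $\eta$ centralizes $\cV$, and the $\cW$-component with $B_X\in\LL\bigl(\overline{\overline{\rho_\cW(\Gamma)}}\bigr)$ places $\eta$ in the $\Gamma$-hull.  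The paper obtains this by first deriving from~(1) the group-level inclusion $\rho_\cV(G)\subset\rho(\Hull_\Gamma^\cW(\sA_x))$ and then invoking Lemma~\ref{L:L-hull} to pass to Lie algebras; your direct stabilizer route is viable but needs the corrections above.
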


The proof of the theorem generalizes ideas in the proofs of Gromov's centralizer theorem in \cite{Gr,Zi93}.
We make use of two geometric structures
associated with the $i$-jets of Killing fields in $\cV$ and $\cW$,
respectively. Let $k>i$ be an integer. We first introduce a
notation. Let $N_1$ and $N_2$ be $n$-dimensional $C^\varepsilon$
manifolds, and let $x\in N_1$, $y\in N_2$. If $f$ is a local
$C^\varepsilon$ diffeomorphism from $N_1$ to $N_2$ defined around
$x$ sending $x$ to $y$ and $\xi\in\Vect^\germ_x(N_1)$, then we
denote $(j_x^kf)_*(j_x^i\xi)=j_y^idf(\xi)$. Since $k>i$,
$(j_x^kf)_*(j_x^i\xi)$ depends only on $j_x^kf$ and $j_x^i\xi$. This
defines a linear isomorphism
$(j_x^kf)_*:\Vect^i_x(N_1)\to\Vect^i_y(N_2)$.

Now we define the geometric structure induced by $\cV$. This is
motivated by \cite[Sect. 5.1]{Gr}. Let $L(\cV,\Vect^i_0(\RR^n))$
denote the space of linear maps from $\cV$ to $\Vect^i_0(\RR^n)$.
Then $\GL^k(n)\times\GL(\cV)$ acts regularly on
$L(\cV,\Vect^i_0(\RR^n))$ by
$$(\alpha,A).\ell=\alpha_*\circ\ell\circ A^{-1}, \quad
\alpha\in\GL^k(n), A\in\GL(\cV), \ell\in L(\cV,\Vect^i_0(\RR^n)).$$
We define a map
$$\tau_{\cV,k,i}:F^k(M)\to L(\cV,\Vect^i_0(\RR^n))$$ by
$$\tau_{\cV,k,i}(\beta)(v)=\beta^{-1}_*(j_x^iv), \quad x\in M, \beta\in F^k(M)_x,
v\in \cV.$$ Note that $G$ acts on $F^k(M)$ by
$g.\beta=j_x^kg\circ\beta$, where $\beta\in F^k(M)_x$.

\begin{lemma}\label{L:equivariance}
The map $\tau_{\cV,k,i}$ is a $C^\varepsilon$ $\A$-structure, and is
$G$-equivariant with respect to $\rho_\cV$.
\end{lemma}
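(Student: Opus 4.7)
The plan is to verify three properties of $\tau_{\cV,k,i}$ in turn: (i) it is well-defined with values in $L(\cV,\Vect^i_0(\RR^n))$, (ii) it is $C^\varepsilon$ and $\GL^k(n)$-equivariant, hence an $\A$-structure of order $k$ (the target being a finite-dimensional real vector space carrying a regular $\GL^k(n)\times\GL(\cV)$-action), and (iii) it is $G$-equivariant with respect to $\rho_\cV$, viewed as acting through the $\GL(\cV)$-factor.

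For (i), since $\beta\in F^k(M)_x$ encodes in particular the $k$-jet at $x$ of a local inverse and $k>i$, the push-forward $\beta^{-1}_*$ sends $i$-jets of vector fields at $x$ to elements of $\Vect^i_0(\RR^n)$; linearity in $v$ is immediate. For smoothness I would fix a basis $\{v_1,\dots,v_m\}$ of $\cV$ (finite by Theorem \ref{T:Gromov1}(3)), and use that $x\mapsto j^i_xv_j$ is a $C^\varepsilon$ section of the $i$-jet bundle and that $(\beta,\xi)\mapsto\beta^{-1}_*(\xi)$ is algebraic in the frame coordinates. For the $\GL^k(n)$-equivariance, the identity $(\beta\circ\alpha)^{-1}_*=(\alpha^{-1})_*\circ(\beta^{-1})_*$ yields directly
$$\tau_{\cV,k,i}(\beta\cdot\alpha)=(\alpha^{-1})_*\circ\tau_{\cV,k,i}(\beta),$$
which matches the action $\alpha.\ell=\alpha_*\circ\ell$ (that is, $A=\id$ in the ambient $\GL^k(n)\times\GL(\cV)$-action). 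The target being a finite-dimensional vector space with polynomial action then shows $\tau_{\cV,k,i}$ is a $C^\varepsilon$ $\A$-structure.

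The core step is (iii). Here I will use the functoriality of the jet push-forward: for any $w\in\Kill(M)$ and any $g\in G$, since $g$ is a $C^\varepsilon$ isometry and $k>i$, one has $(j^k_xg)_*(j^i_xw)=j^i_{gx}(g_*w)$ for $\beta\in F^k(M)_x$. Applying this with $w=\rho_\cV(g)^{-1}v$ (which lies in $\cV$ by $G$-invariance) and inverting gives $(j^k_xg)^{-1}_*(j^i_{gx}v)=j^i_x(\rho_\cV(g)^{-1}v)$. Since $g\cdot\beta=j^k_xg\circ\beta$, a direct computation then yields
$$\tau_{\cV,k,i}(g\cdot\beta)(v)=\beta^{-1}_*\bigl((j^k_xg)^{-1}_*(j^i_{gx}v)\bigr)=\tau_{\cV,k,i}(\beta)(\rho_\cV(g)^{-1}v),$$
which is precisely the action of $\rho_\cV(g)$ through the $\GL(\cV)$-factor. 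There is no serious obstacle beyond bookkeeping: the only items to track carefully are the conventions for the right $\GL^k(n)$-action on frames and the clean separation between the $\GL^k(n)$-factor (which makes $\tau_{\cV,k,i}$ a geometric structure) and the $\GL(\cV)$-factor (which carries the $G$-equivariance). Both factors are set up so that $\tau_{\cV,k,i}$ intertwines the $G$-action on $M$ with $\rho_\cV$.
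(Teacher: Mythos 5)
Your argument is correct and is essentially the same computation as the paper's: the $\GL^k(n)$-equivariance follows from $(\beta\circ\alpha)^{-1}_*=\alpha^{-1}_*\circ\beta^{-1}_*$, and the $G$-equivariance follows from the jet-pushforward identity applied to $\rho_\cV(g)^{-1}v=dg^{-1}(v)\in\cV$, exactly as in the published proof. The extra remarks you include about well-definedness, linearity, and $C^\varepsilon$-regularity (items (i)--(ii)) are left implicit in the paper's proof, which simply reduces the lemma to the two equivariance checks.
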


\begin{proof}
It suffices to check the $\GL^k(n)$-equivariance and
$G$-equivariance of $\tau_{\cV,k,i}$. Let $\alpha\in\GL^k(n)$, $g\in
G$, $x\in M$, $\beta\in F^k(M)_x$. Then for $v\in \cV$ we have
$$\tau_{\cV,k,i}(\beta\circ\alpha^{-1})(v)
=(\alpha\circ\beta^{-1})_*(j_x^iv)
=(\alpha_*\circ\tau_{\cV,k,i}(\beta))(v)=(\alpha.\tau_{\cV,k,i}(\beta))(v)$$
and
\begin{align*}
\tau_{\cV,k,i}(g.\beta)(v)&=\tau_{\cV,k,i}(j^k_xg\circ\beta)(v)
=(\beta^{-1}\circ j^k_{gx}(g^{-1}))_*(j_{gx}^iv)\\
&=\beta^{-1}_*(j^i_x dg^{-1}(v))=\beta^{-1}_*(j^i_x\rho_\cV(g^{-1})v)\\
&=(\tau_{\cV,k,i}(\beta)\circ\rho_\cV(g)^{-1})(v)=(\rho_\cV(g).\tau_{\cV,k,i}(\beta))(v).
\end{align*}
Hence
$\tau_{\cV,k,i}(\beta\circ\alpha^{-1})=\alpha.\tau_{\cV,k,i}(\beta)$
and $\tau_{\cV,k,i}(g.\beta)=\rho_\cV(g).\tau_{\cV,k,i}(\beta)$.
\end{proof}

Next we define the geometric structure induced by $\cW$. By applying
the above construction to $\widetilde{M}$, we get a $C^\varepsilon$
$\A$-structure $\tau_{\cW,k,i}:F^k(\widetilde{M})\to
L(\cW,\Vect^i_0(\RR^n))$. An argument similar to the proof of the
second assertion of Lemma \ref{L:equivariance} shows that
$\tau_{\cW,k,i}$ is $\Gamma$-equivariant with respect to the
representation $\rho_\cW:\Gamma\to\GL(\cW)$. The map
$F^k(\widetilde{M})\to F^k(M)$ sending $\tilde{\beta}\in
F^k(\widetilde{M})_{\tilde{x}}$ to
$j_{\tilde{x}}^k\pi\circ\tilde{\beta}$ induces an identification
$F^k(\widetilde{M})/\Gamma\cong F^k(M)$. Thus $\tau_{\cW,k,i}$
induces a continuous map
$$\upsilon_{\cW,k,i}:F^k(M)\to
L(\cW,\Vect^i_0(\RR^n))\Big/\overline{\overline{\rho_\cW(\Gamma)}},$$
where the algebraic quotient
$L(\cW,\Vect^i_0(\RR^n))\Big/\overline{\overline{\rho_\cW(\Gamma)}}$
is endowed with the quotient topology. Writing explicitly, we have
$$\upsilon_{\cW,k,i}(\beta)=\left(\overline{\overline{\rho_\cW(\Gamma)}}\right)
(\tau_{\cW,k,i}((j^{k}_{\tilde{x}}\pi)^{-1}\circ\beta)), \quad
x\in M, \beta\in F^k(M)_x, \tilde{x}\in\pi^{-1}(x),$$ where
$$\tau_{\cW,k,i}((j^{k}_{\tilde{x}}\pi)^{-1}\circ\beta)(w)=\beta^{-1}_*(d\pi(j_{\tilde{x}}^iw)),
\quad w\in\cW.$$ Note that $\GL^k(n)$ acts continuously on
$L(\cW,\Vect^i_0(\RR^n))\Big/\overline{\overline{\rho_\cW(\Gamma)}}$.

\begin{lemma}\label{L:invariance}
The map $\upsilon_{\cW,k,i}$ is a continuous $G$-invariant geometric
structure.
\end{lemma}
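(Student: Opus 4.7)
The plan is to obtain all the claimed properties of $\upsilon_{\cW,k,i}$ by descending corresponding equivariance properties of $\tau_{\cW,k,i}:F^k(\widetilde{M})\to L(\cW,\Vect^i_0(\RR^n))$ through the quotient map $L(\cW,\Vect^i_0(\RR^n))\to L(\cW,\Vect^i_0(\RR^n))\big/\overline{\overline{\rho_\cW(\Gamma)}}$. First I would record, by applying Lemma \ref{L:equivariance} with $(M,G)$ replaced by $(\widetilde{M},\widetilde{G})$, that $\tau_{\cW,k,i}$ is a $\GL^k(n)$-equivariant $C^\varepsilon$ $\A$-structure and is moreover $\widetilde{G}$-equivariant with respect to the natural representation $\tilde{g}\mapsto d\tilde{g}|_{\cW}$; the analogous calculation with deck transformations in place of $\widetilde{G}$ yields the already-noted $\Gamma$-equivariance with respect to $\rho_\cW$.

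With these in hand, well-definedness of $\upsilon_{\cW,k,i}$ becomes immediate: two lifts $\tilde{x},\gamma\tilde{x}\in\pi^{-1}(x)$ produce values of $\tau_{\cW,k,i}$ differing by $\rho_\cW(\gamma)\in\overline{\overline{\rho_\cW(\Gamma)}}$ and hence determine the same point in the quotient. Continuity then follows by descent: the composition of $\tau_{\cW,k,i}$ with the continuous quotient map is $\Gamma$-invariant, so it factors through a continuous map on $F^k(\widetilde{M})/\Gamma=F^k(M)$. Since the $\GL^k(n)$-action and the $\overline{\overline{\rho_\cW(\Gamma)}}$-action on $L(\cW,\Vect^i_0(\RR^n))$ commute (the first operates on the $\Vect^i_0(\RR^n)$ factor, the second on the $\cW$ factor), the $\GL^k(n)$-equivariance of $\tau_{\cW,k,i}$ transfers to $\upsilon_{\cW,k,i}$, exhibiting it as a continuous geometric structure.

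The main step, and the only genuinely new ingredient, is the $G$-invariance. The pivot is the observation that the natural $\widetilde{G}$-representation on $\cW$ is trivial, which uses crucially the hypothesis $\cW\subset\Kill(\widetilde{M})^\cG$: for each $X\in\Lg$ the lifted Killing field in $\widetilde{\cG}$ corresponding to $\iota(X)\in\cG$ is complete with flow $t\mapsto\exp(-tX)$, and the centralizer condition gives $[\iota(X)^\sim,w]=0$ for every $w\in\cW$, hence $d(\exp(-tX))(w)=w$; connectedness of $\widetilde{G}$ then forces $d\tilde{g}|_{\cW}=\id$ for all $\tilde{g}\in\widetilde{G}$. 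Given this triviality, for $g\in G$ with a lift $\tilde{g}\in\widetilde{G}$, $\beta\in F^k(M)_x$, and $\tilde{x}\in\pi^{-1}(x)$, the identity $\pi\circ\tilde{g}=g\circ\pi$ converts the $G$-action on $\beta$ into the $\widetilde{G}$-action on its lift $(j^k_{\tilde{x}}\pi)^{-1}\circ\beta\in F^k(\widetilde{M})_{\tilde{x}}$, and the $\widetilde{G}$-equivariance of $\tau_{\cW,k,i}$ combined with the triviality just established yields $\upsilon_{\cW,k,i}(g.\beta)=\upsilon_{\cW,k,i}(\beta)$. I expect the only delicate bookkeeping to be matching base points and lifts in this last identification; the remaining verifications are routine descent through the quotient.
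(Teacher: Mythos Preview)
Your proposal is correct and follows essentially the same approach as the paper: deduce $\GL^k(n)$-equivariance of $\upsilon_{\cW,k,i}$ from that of $\tau_{\cW,k,i}$, then obtain $G$-invariance by noting that the hypothesis $\cW\subset\Kill(\widetilde{M})^\cG$ forces the $\widetilde{G}$-representation on $\cW$ to be trivial, so $\tau_{\cW,k,i}$ is $\widetilde{G}$-invariant and this descends through the quotients. Your write-up simply supplies more detail (the flow argument for triviality, the commutation of the $\GL^k(n)$- and $\overline{\overline{\rho_\cW(\Gamma)}}$-actions, and the explicit descent for continuity) than the paper's terse version.
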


\begin{proof}
The $\GL^k(n)$-equivariance of $\upsilon_{\cW,k,i}$ follows from
that of $\tau_{\cW,k,i}$. To prove the $G$-invariance, we consider
the induced action of $\widetilde{G}$ on $\widetilde{M}$. Since
$\cW\subset\Kill(\widetilde{M})^\cG$, $\cW$ is
$\widetilde{G}$-invariant and the representation
$\widetilde{G}\to\GL(\cW)$ is trivial. An argument similar to the
proof of Lemma \ref{L:equivariance} shows that $\tau_{\cW,k,i}$ is
$\widetilde{G}$-invariant. By passing to the quotients, we obtain
the $G$-invariance of $\upsilon_{\cW,k,i}$.
\end{proof}

Now we are prepared to prove Theorem \ref{T:mainrestate}. We first
prove assertion (1).

\begin{proof}[Proof of Theorem \ref{T:mainrestate}(1)]
Let $k>i$ be such that every point in $M_\reg$ is $k$-regular.
Consider the $(k-r)$-th prolongation $\sigma^{k-r}:F^k(M)\to
J_n^{k-r}(V)$ of $\sigma$ and the geometric structures
$\tau_{\cV,k,i}$ and $\upsilon_{\cW,k,i}$ constructed above. Since
the $G$-action on $M$ is isometric, $\sigma^{k-r}$ is $G$-invariant.
By Lemmas \ref{L:equivariance} and \ref{L:invariance},
$\tau_{\cV,k,i}$ is $G$-equivariant with respect to $\rho_\cV$, and
$\upsilon_{\cW,k,i}$ is $G$-invariant. Consider the regular action
of $\GL^k(n)\times\GL(\cV)\times\GL(\cW)$ on
$$W=J_n^{k-r}(V)\times L(\cV,\Vect^i_0(\RR^n))\times
L(\cW,\Vect^i_0(\RR^n))$$ defined by
$$(\alpha,A_1,A_2).(\zeta,\ell_1,\ell_2)=(\alpha.\zeta,\alpha_*\circ\ell_1\circ A_1^{-1},\alpha_*\circ\ell_2\circ
A_2^{-1}),$$ where
$(\alpha,A_1,A_2)\in\GL^k(n)\times\GL(\cV)\times\GL(\cW)$ and
$(\zeta,\ell_1,\ell_2)\in W$. Then there is a natural identification
$$W\Big/\overline{\overline{\rho_\cW(\Gamma)}}=
J_n^{k-r}(V)\times L(\cV,\Vect^i_0(\RR^n))\times
\left(L(\cW,\Vect^i_0(\RR^n))\Big/\overline{\overline{\rho_\cW(\Gamma)}}\right).$$
The continuous geometric structure
$$\sigma^{k-r}\times\tau_{\cV,k,i}\times\upsilon_{\cW,k,i}:F^k(M)\to
W\Big/\overline{\overline{\rho_\cW(\Gamma)}}$$ is $G$-equivariant
with respect to $\rho_\cV$. So its continuous Gauss map
$$\theta:M\to
W\Big/\left(\GL^k(n)\times\overline{\overline{\rho_\cW(\Gamma)}}\right)$$
is also $G$-equivariant with respect to $\rho_\cV$. Since $\theta$
is measurable, it induces a finite Borel measure $\theta_*(\mu)$ on
$W\Big/\left(\GL^k(n)\times\overline{\overline{\rho_\cW(\Gamma)}}\right)$.
The condition $\ker(\rho_\cV)\cdot G_\mu=G$ implies that
$\theta_*(\mu)$ is $G$-invariant. Since $G$ is
$\rho_\cV$-discompact, by Corollary \ref{C:Borel}, the set
$$F=\left(W\Big/\left(\GL^k(n)\times\overline{\overline{\rho_\cW(\Gamma)}}\right)\right)
^G$$ is $G$-saturated, constructible, and $\theta_*(\mu)$-conull.
Let
$$M'=\theta^{-1}(F)\cap M_\reg.$$
Obviously, $\theta^{-1}(F)$ is constructible and $\mu$-conull. By
Lemma \ref{L:inverseimage}, $\theta^{-1}(F)$ is $G$-saturated. So
$M'$ is $G$-saturated and constructible, and satisfies
$\mu(M_\reg\backslash M')=0$. We prove that every $x\in M'$
satisfies the requirement of Theorem \ref{T:mainrestate}(1).

Let $p=\theta(x)$. Then $p$ is fixed by $G$. By Lemma
\ref{L:quotient1}(1), $\{p\}$ is locally closed. So
$\theta^{-1}(p)$, and hence $\theta^{-1}(p)\cap M_\reg$, is
$G$-invariant and locally closed. Let $C_x$ be the connected
component of $\theta^{-1}(p)\cap M_\reg$ containing $x$. We claim
that $C_x$ is also $G$-invariant and locally closed. Indeed, since
$C_x$ is closed in $\theta^{-1}(p)\cap M_\reg$, it is locally
closed. To see the $G$-invariance of $C_x$, let $y\in C_x$. Since
$Gy$ and $C_x$ are connected and $Gy\cap C_x\ne\emptyset$, $Gy\cup
C_x$ is also connected. But $Gy\cup C_x$ is contained in
$\theta^{-1}(p)\cap M_\reg$ and $C_x$ is a connected component of
$\theta^{-1}(p)\cap M_\reg$. So we must have $Gy\cup C_x=C_x$. Thus
$Gy\subset C_x$, and hence $C_x$ is $G$-invariant. Let
$S_x=C_x\cap\overline{Gx}$, which is obviously $G$-invariant. We
claim that $S_x$ is open dense in $\overline{Gx}$. Indeed, since
$C_x$ is open in $\overline{C_x}$, $S_x$ is open in
$\overline{C_x}\cap\overline{Gx}=\overline{Gx}$. On the other hand,
since $Gx\subset S_x$, $S_x$ is dense in $\overline{Gx}$. Thus it
suffices to prove that the
$\Hull_\Gamma^\cW(\Iso^\germ(M)^\cV)$-orbit of $x$ contains $S_x$.

We prove the stronger assertion that the
$\Hull_\Gamma^\cW(\Iso^\germ(M)^\cV)$-orbit of $x$ contains $C_x$.
Since $C_x$ is connected, we need only to show that for every
$\Hull_\Gamma^\cW(\Iso^\germ(M)^\cV)$-orbit $O$ in $M$, $O\cap C_x$
is open in $C_x$. If $O\cap C_x=\emptyset$, there is nothing to
prove. Otherwise, for any $y\in O\cap C_x$, since $y\in M_\reg$,
there exists an open neighborhood $U_y$ of $y$ in $M$ such that
$\Iso^{\germ}_{y,z}(M)\to\Iso^k_{y,z}(M)$ is surjective for every
$z\in U_y$. We will prove that $U_y\cap C_x\subset O$. If this is
true, then the open neighborhood $U_y\cap C_x$ of $y$ in $C_x$ will
be contained in $O\cap C_x$, and hence $O\cap C_x$ will be open in
$C_x$.

It remains to prove that $U_y\cap C_x\subset O$. Let $z\in U_y\cap
C_x$. Choose $\beta_y\in F^k(M)_y$ and $\beta_z\in F^k(M)_z$. Since
$y,z\in C_x$, we have $\theta(z)=\theta(y)$. So there exists
$\alpha\in\GL^k(n)$ such that
$$\alpha.(\sigma^{k-r}\times\tau_{\cV,k,i}\times\upsilon_{\cW,k,i})(\beta_y)
=(\sigma^{k-r}\times\tau_{\cV,k,i}\times\upsilon_{\cW,k,i})(\beta_z).$$
Hence we have
\begin{align}
\label{E:1} \alpha.\sigma^{k-r}(\beta_y)&=\sigma^{k-r}(\beta_z),\\
\label{E:2} \alpha.\tau_{\cV,k,i}(\beta_y)&=\tau_{\cV,k,i}(\beta_z),\\
\label{E:3}
\alpha.\upsilon_{\cW,k,i}(\beta_y)&=\upsilon_{\cW,k,i}(\beta_z).
\end{align}
Equation \eqref{E:1} implies that
$$\sigma^{k-r}((\beta_z\circ\alpha\circ\beta_y^{-1})\circ\beta_y)
=\alpha^{-1}.\sigma^{k-r}(\beta_z)=\sigma^{k-r}(\beta_y).$$
So $\beta_z\circ\alpha\circ\beta_y^{-1}\in\Iso^k_{y,z}(M)$. Since
$\Iso^{\germ}_{y,z}(M)\to\Iso^k_{y,z}(M)$ is surjective, there
exists $\varphi\in\Iso^{\germ}_{y,z}(M)$ such that
$j^k_y\varphi=\beta_z\circ\alpha\circ\beta_y^{-1}$. By \eqref{E:2},
for any $v\in \cV$ we have
\begin{align*}
j_z^id\varphi(\langle
v\rangle_y)&=(j_y^k\varphi)_*(j_y^iv)=(\beta_z\circ\alpha\circ\beta_y^{-1})_*(j_y^iv)
=(\beta_z)_*(\alpha.\tau_{\cV,k,i}(\beta_y)(v))\\
&=(\beta_z)_*(\tau_{\cV,k,i}(\beta_z)(v))
=(\beta_z)_*((\beta_z^{-1})_*(j_z^iv))=j_z^i(\langle v\rangle_z).
\end{align*}
Since $\Kill^{\germ}_z(M)\to\Vect^i_z(M)$ is injective, we have
$d\varphi(\langle v\rangle_y)=\langle v\rangle_z$ for all $v\in
\cV$. So $\varphi\in\Iso^\germ(M)^\cV$. Let
$\tilde{y}\in\pi^{-1}(y)$, $\tilde{z}\in\pi^{-1}(z)$,
$\beta_{\tilde{y}}=(j_{\tilde{y}}^k\pi)^{-1}\circ\beta_y$,
$\beta_{\tilde{z}}=(j_{\tilde{z}}^k\pi)^{-1}\circ\beta_z$. By
\eqref{E:3}, there exists
$A\in\overline{\overline{\rho_\cW(\Gamma)}}$ such that
$$\alpha.\tau_{\cW,k,i}(\beta_{\tilde{y}})=\tau_{\cW,k,i}(\beta_{\tilde{z}})\circ
A.$$ Let $\lambda_{\tilde{y}}$ and $\lambda_{\tilde{z}}$ be the
homomorphisms defined in \eqref{E:lambda}. Then for any $w\in \cW$
we have
\begin{align*}
j_z^id\varphi(\lambda_{\tilde{y}}(w))&=j_z^id\varphi(d\pi\langle
w\rangle_{\tilde{y}})=(j_y^k\varphi\circ
j_{\tilde{y}}^k\pi)_*(j_{\tilde{y}}^iw)
=(\beta_z\circ\alpha\circ\beta_y^{-1}\circ j_{\tilde{y}}^k\pi)_*(j_{\tilde{y}}^iw)\\
&=(\beta_z\circ\alpha\circ\beta_{\tilde{y}}^{-1})_*(j_{\tilde{y}}^iw)=(\beta_z)_*(\alpha.\tau_{\cW,k,i}(\beta_{\tilde{y}})(w))\\
&=(\beta_z)_*(\tau_{\cW,k,i}(\beta_{\tilde{z}})(Aw))=(\beta_z)_*((\beta_{\tilde{z}}^{-1})_*(j_{\tilde{z}}^iAw))\\
&=(j_{\tilde{z}}^k\pi)_*(j_{\tilde{z}}^iAw)=j_z^id\pi(\langle
Aw\rangle_{\tilde{z}})=j_z^i\lambda_{\tilde{z}}(Aw).
\end{align*}
By the injectivity of $\Kill^{\germ}_z(M)\to\Vect^i_z(M)$, we have
$d\varphi(\lambda_{\tilde{y}}(w))=\lambda_{\tilde{z}}(Aw)$ for all
$w\in\cW$. So $\varphi\in\Hull_\Gamma^\cW(\Iso^\germ(M)^\cV)$. Thus
$z$ lies in the $\Hull_\Gamma^\cW(\Iso^\germ(M)^\cV)$-orbit $O$ of
$y$. This proves that $U_y\cap C_x\subset O$, and hence completes
the proof of Theorem \ref{T:mainrestate}(1).
\end{proof}

Now we deduce the second assertion of the theorem from the first
one.

\begin{proof}[Proof of Theorem \ref{T:mainrestate}(2)]
Note that for $x\in M$, a $G$-invariant open dense subset of
$\overline{Gx}$ must contain $Gx$. So by Theorem
\ref{T:mainrestate}(1), it suffices to prove that for $x\in M$, if
the condition
\begin{equation}\label{E:condition}
\text{$Gx$ is contained in the
$\Hull_\Gamma^\cW(\Iso^\germ(M)^\cV)$-orbit of $x$}
\end{equation}
holds, then
$T_xGx\subset\ev_x(\Hull_\Gamma^\cW(\Kill^\germ_x(M)^\cV))$.

Let $\cV_x$ be the image of $\cV$ under the injection
$\Kill(M)\to\Kill_x^\germ(M)$. For $v\in\cV$, we denote
$\lambda_x(v)=\langle v\rangle_x$. Then we have an isomorphism
$\lambda_x:\cV\to\cV_x$. Let
$$\sA_x=\{\varphi\in\Iso_x^\germ(M)\mid
d\varphi(\cV_x)=\cV_x\}.$$ Then $\sA_x$ is a closed subgroup of
$\Iso_x^\germ(M)$. For $\varphi\in\sA_x$, we denote
$\rho(\varphi)=\lambda_x^{-1}\circ d\varphi\circ\lambda_x$. Then
$\rho:\sA_x\to\GL(\cV)$ is a representation. We first prove that if
\eqref{E:condition} holds, then
\begin{equation}\label{E:inclusion1}
\rho_\cV(G)\subset\rho(\Hull_\Gamma^\cW(\sA_x)).
\end{equation}
Let $g\in G$. Then \eqref{E:condition} implies that there exists
$\psi\in\Hull_\Gamma^\cW(\Iso^\germ(M)^\cV)$ with source $gx$ and
target $x$. Let $\varphi=\psi\circ\langle
g\rangle_x\in\Iso_x^\germ(M)$. We prove \eqref{E:inclusion1} by
showing that $\varphi\in\Hull_\Gamma^\cW(\sA_x)$ and
$\rho(\varphi)=\rho_\cV(g)$. Since $\psi$ centralizes $\cV$, for any
$v\in\cV$ we have
\begin{align*}
d\varphi\circ\lambda_x(v)&=d(\psi\circ
\langle g\rangle_x)(\langle v\rangle_x)=d\psi(\langle dg(v)\rangle_{gx})\\
&=\langle dg(v)\rangle_x=\lambda_x\circ\rho_\cV(g)(v).
\end{align*}
So $\varphi\in\sA_x$ and $\rho(\varphi)=\rho_\cV(g)$. Consider the
induced action of $\widetilde{G}$ on $\widetilde{M}$. Let
$\tilde{g}$ be an element in the preimage of $g$ under the covering
homomorphism $\widetilde{G}\to G$, and let
$\tilde{x}\in\pi^{-1}(x)$. Then $\pi(\tilde{g}\tilde{x})=gx$. By the
choice of $\psi$, there exists
$A\in\overline{\overline{\rho_\cW(\Gamma)}}$ such that
$d\psi(d\pi(\langle w\rangle_{\tilde{g}\tilde{x}}))=d\pi(\langle
Aw\rangle_{\tilde{x}})$ for all $w\in \cW$. Since
$\cW\subset\Kill(\widetilde{M})^\cG$, we have $d\tilde{g}(w)=w$ for
all $w\in \cW$. Thus
\begin{align*}
d\varphi(d\pi(\langle
w\rangle_{\tilde{x}}))&=d\psi(d(g\circ\pi)(\langle
w\rangle_{\tilde{x}}))=d\psi(d(\pi\circ\tilde{g})(\langle
w\rangle_{\tilde{x}}))\\
&=d\psi(d\pi(\langle
d\tilde{g}(w)\rangle_{\tilde{g}\tilde{x}}))=d\psi(d\pi(\langle
w\rangle_{\tilde{g}\tilde{x}}))=d\pi(\langle Aw\rangle_{\tilde{x}})
\end{align*}
for all $w\in \cW$. Hence $\varphi\in\Hull_\Gamma^\cW(\sA_x)$. This
proves \eqref{E:inclusion1}. Note that \eqref{E:inclusion1} and
Lemma \ref{L:L-hull} imply that
\begin{equation}\label{E:inclusion2}
d\rho_\cV(\Lg)=\LL(\rho_\cV(G))\subset\LL(\rho(\Hull_\Gamma^\cW(\sA_x)))=d\rho(\Hull_\Gamma^\cW(\LL(\sA_x))).
\end{equation}
Note also that by taking Lie derivatives, we have
$$\LL(\sA_x)=\{\xi\in\Kill_x^\germ(M)_0\mid[\xi,\cV_x]\subset\cV_x\},$$
and
$$d\rho_\cV(X)(v)=[\iota(X),v], \quad
d\rho(\xi)(v)=\lambda_x^{-1}([\xi,\langle v\rangle_x)]), \quad
X\in\Lg, \xi\in\LL(\sA_x), v\in\cV.$$

Now we prove that if \eqref{E:inclusion2} holds, then
$T_xGx\subset\ev_x(\Hull_\Gamma^\cW(\Kill^\germ_x(M)^\cV))$. Let
$X\in\Lg$. By \eqref{E:inclusion2}, there exists
$\xi\in\Hull_\Gamma^\cW(\LL(\sA_x))$ such that
$d\rho_\cV(X)=d\rho(\xi)$. This means that
$$[\langle\iota(X)\rangle_x,\langle
v\rangle_x]=\langle[\iota(X),v]\rangle_x=[\xi,\langle v\rangle_x]$$
for all $v\in\cV$. Let $\eta=\langle\iota(X)\rangle_x-\xi$. Then
$\eta\in\Kill^\germ_x(M)^\cV$. Moreover, since
$\xi\in\Hull_\Gamma^\cW(\LL(\sA_x))$ and
$\cW\subset\Kill(\widetilde{M})^\cG$, there exists
$B\in\LL\left(\overline{\overline{\rho_\cW(\Gamma)}}\right)$ such
that $$[\eta,d\pi(\langle w\rangle_{\tilde{x}})]=-[\xi,d\pi(\langle
w\rangle_{\tilde{x}})]=-d\pi(\langle Bw\rangle_{\tilde{x}})$$ for
all $w\in\cW$. Thus $\eta\in\Hull_\Gamma^\cW(\Kill^\germ_x(M)^\cV)$.
Note that $\ev_x(\xi)=0$. So $\ev_x(\eta)=\iota_x(X)$. Hence
$$T_xGx=\iota_x(\Lg)\subset\ev_x(\Hull_\Gamma^\cW(\Kill^\germ_x(M)^\cV)).$$
This proves Theorem \ref{T:mainrestate}(2).
\end{proof}

\section{Generalizations of Gromov's theorems}\label{S:proof1}

As explained in Section 1, Theorem \ref{T:main} unifies Gromov's centralizer, representation, and open dense orbit theorems. In this section we provide the details. We first prove Theorem \ref{T:cen-rep}, which generalizes the centralizer and representation theorems. Throughout this section we denote
$\Gamma=\pi_1(M)$.

\begin{proof}[Proof of Theorem \ref{T:cen-rep}]
Let $\cW=\Kill(\widetilde{M})^\cV$, which is obviously
$\Gamma$-invariant and contained in $\Kill(\widetilde{M})^\cG$. For
$x\in M$, let $\cG_x$ be the image of $\cG$ under the injection
$\Kill(M)\to\Kill^{\germ}_x(M)$, and denote
$$\La_x=\Kill^\germ_x(M)^\cV, \quad \Lh_x=\Hull_\Gamma^\cW(\La_x), \quad \Lf_x=\cG_x+\Lh_x.$$
Then $[\cG_x,\Lh_x]=0$. By Theorem \ref{T:main}(2), there exists a
$G$-saturated constructible set $M'\subset M_\reg$ with
$\mu(M_\reg\backslash M')=0$ such that
\begin{equation}\label{E:cen-rep-1}
T_xGx\subset\ev_x(\Lh_x), \quad x\in M'.
\end{equation}
We prove that every $x\in M'$ satisfies the requirements of Theorem
\ref{T:cen-rep}.

Firstly, since $\Lh_x<\La_x$, from \eqref{E:cen-rep-1} we obtain
$T_xGx\subset\ev_x(\La_x)$. On the other hand, since
$\ev_x(\cG_x)=T_xGx$, \eqref{E:cen-rep-1} implies that $\Lf_x$ is
equal to the sum of $\Lh_x$ and the kernel of the evaluation map
$\ev_x:\Lf_x\to T_xM$, which is a subalgebra of $\Lf_x$. By Lemma
\ref{L:subquotient3}, there exist $\Lz_x<\Lh_x$ and a Lie algebra
structure on $T_xGx$ such that $\ev_x(\Lz_x)=T_xGx$, and such that
$\ev_x:\cG_x\to T_xGx$ and $-\ev_x:\Lz_x\to T_xGx$ are Lie algebra
homomorphisms. In particular, we have $T_xGx\prec\Lh_x$. This also
implies that
$\iota_x=\ev_x\circ\langle\cdot\rangle_x\circ\iota:\Lg\to T_xGx$ is
a homomorphism. Thus $\Lg(x)=\ker(\iota_x)\lhd\Lg$, and we have
\begin{equation}\label{E:cen-rep-2}
\Lg/\Lg(x)\cong T_xGx\prec\Lh_x.
\end{equation}

If $\varepsilon=\omega$, then by Theorem \ref{T:Gromov2}(2), for any
$\tilde{x}\in\pi^{-1}(x)$, the homomorphism $\lambda_{\tilde{x}}$
defined in \eqref{E:lambda} restricts to an isomorphism from
$\Kill(\widetilde{M})$ onto $\Kill^\germ_x(M)$. This implies that
$\lambda_{\tilde{x}}(\cW)=\La_x$. So
$$d\pi(T_{\tilde{x}}\widetilde{G}\tilde{x})=T_xGx\subset\ev_x(\La_x)
=\ev_x(\lambda_{\tilde{x}}(\cW))=d\pi(\ev_{\tilde{x}}(\cW)).$$ Thus
$T_{\tilde{x}}\widetilde{G}\tilde{x}\subset\ev_{\tilde{x}}(\cW)$. On
the other hand, the fact $\lambda_{\tilde{x}}(\cW)=\La_x$ also
implies that the Lie algebra $\La_x^\cW$ defined in \eqref{E:a_x^W}
is equal to $Z(\La_x)$. Thus by Lemma \ref{L:exact}, we have
$\Lh_x/Z(\La_x)\prec\LL\left(\overline{\overline{\rho(\Gamma)}}\right)$.
Now from \eqref{E:cen-rep-2} and Lemma \ref{L:subquotient2} we
obtain
$$\ad(\Lg/\Lg(x))
\prec\ad(\Lh_x)\cong\Lh_x/Z(\Lh_x)\prec\Lh_x/Z(\La_x)\prec\LL\left(\overline{\overline{\rho(\Gamma)}}\right).$$
This completes the proof.
\end{proof}

\begin{remark}\label{R:locallyfree}
The assertion $\Lg(x)\lhd\Lg$ in Theorem \ref{T:cen-rep}(1) in fact
holds without the geometric structure. More precisely, if an
$\Ad$-discompact Lie group $G$ acts smoothly on a smooth manifold
$M$ and preserves a finite Borel measure $\mu$ on $M$, then there
exists a $G$-saturated constructible $\mu$-conull subset $M'\subset
M$ such that for every $x\in M$ we have $\Lg(x)\lhd\Lg$. This is
well-known if we only require $M'$ to be $\mu$-conull (see, e.g.,
\cite[proof of Cor. 3.6]{Zi93}). The topological requirements on
$M'$ can be easily obtained by taking \cite[Lem. 2.1]{St} into
account. This can be also proved directly by using the continuous
Gauss map of the geometric structure $\tau:F^1(M)\to L(\Lg,\RR^n)$
defined by
$\tau(\beta)(X)=\beta^{-1}\left(\left.\frac{d}{dt}\right|_{t=0}\exp(-tX)x\right)$,
where $\beta\in F^1(M)_x$ is viewed as an invertible linear map
$\RR^n\to T_xM$.
\end{remark}

The following result asserts that if $M$ is smooth and Gromov's representation
theorem does not hold, then the local Killing fields on $\widetilde{M}$ are highly non-extendable.

\begin{theorem}\label{T:rep-smooth}
Let $M$ be a connected smooth manifold with a rigid smooth
$\A$-structure and a finite smooth measure $\mu$, and let $G$ be a
connected noncompact simple Lie group with finite center. Suppose
that $G$ acts faithfully, smoothly, and isometrically on $M$ and
preserves $\mu$. Let
$\rho:\pi_1(M)\to\GL(\Kill(\widetilde{M})^{\Kill(M)})$ be the
representation induced by the deck transformations. Then at least
one of the following two assertions holds.
\begin{itemize}
\item[(1)]
$\overline{\overline{\rho(\pi_1(M))}}$ has a Lie subgroup locally
isomorphic to $G$.
\item[(2)] There exists a $G$-invariant open dense subset $U$ of
$M$, on which $G$ acts locally freely, such that for every $x\in U$,
$\Kill_x^\germ(M)$ has a subalgebra $\Lg_x$ satisfying the following
properties.
\begin{itemize}
\item[(i)] $\Lg_x$ centralizes $\Kill(M)$ and $\ev_x(\Lg_x)=T_xGx$.
\item[(ii)] The map $-\iota_x^{-1}\circ\ev_x:\Lg_x\to\Lg$ is a Lie algebra isomorphism.
\item[(iii)] For any
$\tilde{x}\in\pi^{-1}(x)$, every nonzero element in the image of
$\Lg_x$ under the natural isomorphism
$\Kill_x^\germ(M)\to\Kill_{\tilde{x}}^\germ(\widetilde{M})$ can not
be extended to a global Killing field on $\widetilde{M}$.
\end{itemize}
\end{itemize}
\end{theorem}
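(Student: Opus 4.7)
The plan is to apply Theorem~\ref{T:main}(2) with $\cV = \Kill(M)$ and $\cW = \Kill(\widetilde{M})^{\Kill(M)}$, extract a genuine simple subalgebra $\Lg_x$ inside $\Lh_x := \Hull_\Gamma^\cW(\Kill^\germ_x(M)^\cV)$ by a Levi decomposition, and then perform a dichotomy via Lemma~\ref{L:exact}. Since $G$ is connected noncompact simple, $R(G)=1$ is split solvable and $G/R(G)=G$ has no compact factors, so Lemma~\ref{L:discompact}(1) gives $\rho_\cV$-discompactness; and $G$-invariance of $\mu$ makes $\ker(\rho_\cV)\cdot G_\mu = G$ automatic. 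Theorem~\ref{T:main}(2) then furnishes a $G$-saturated constructible $M'\subset M_\reg$ with $\mu(M_\reg\setminus M')=0$ such that $T_xGx \subset \ev_x(\Lh_x)$ for every $x \in M'$. Smoothness of $\mu$ and Corollary~\ref{C:constructible} give a $G$-invariant open dense subset of $M$ contained in $M'$; intersecting with the open dense locally-free locus (Lemma~\ref{L:locallyfree}, using faithfulness of $G$ and smoothness of $\mu$) yields the desired $U$.

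For each $x \in U$, set $\cG_x = \langle \cG\rangle_x$, $\La_x = \Kill^\germ_x(M)^\cV$, and $\Lf_x = \cG_x + \Lh_x$. Applying Lemma~\ref{L:subquotient3} with $\ell = \ev_x|_{\Lf_x}$ produces a Lie subalgebra $\Lz_x < \Lh_x$ and a Lie algebra structure on $T_xGx$, isomorphic to $\Lg$ via $\ev_x|_{\cG_x}$ (by local freeness), such that $-\ev_x\colon \Lz_x \to T_xGx$ is a surjective Lie homomorphism. Take a Levi subalgebra $\Lm_x$ of $\Lz_x$; its solvable radical maps to $0$ in the simple target, and decomposing $\Lm_x = \bigoplus_i \Lm_i$ into simple ideals, each image $-\ev_x(\Lm_i)$ is an ideal of $T_xGx$, hence $0$ or all of it. A nonzero image forces $\Lm_i \cong T_xGx$, and if two distinct simple factors mapped isomorphically then $[\Lm_i,\Lm_j]=0$ would yield $[T_xGx,T_xGx]=0$, a contradiction. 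Thus exactly one simple factor $\Lg_x$ of $\Lm_x$ maps isomorphically to $T_xGx$; lying in $\Lh_x \subset \La_x$, it centralizes $\Kill(M)$, satisfies $\ev_x(\Lg_x) = T_xGx$, and $-\iota_x^{-1}\circ\ev_x\colon\Lg_x \to \Lg$ is a Lie isomorphism, so (2)(i) and (2)(ii) hold.

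For the dichotomy, I first verify that $\La_x^\cW$ is an ideal of $\Lh_x$: for $\xi \in \Lh_x$ and $\eta \in \La_x^\cW$, the Hull condition on $\xi$ gives $[\xi,w] \in \cW_x := \lambda_{\tilde x}(\cW)$ whenever $w \in \cW_x$, and Jacobi combined with $[\eta,w]=0$ yields $[[\xi,\eta],w] = -[\eta,[\xi,w]] = 0$. Hence $\Lg_x \cap \La_x^\cW$ is an ideal of the simple algebra $\Lg_x$, so equals $0$ or $\Lg_x$. If the first case occurs for some $x\in U$, the exact sequence of Lemma~\ref{L:exact} embeds $\Lg\cong\Lg_x$ into $\LL\bigl(\overline{\overline{\rho(\pi_1(M))}}\bigr)$, giving assertion~(1). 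Otherwise $\Lg_x \subset \La_x^\cW$ for every $x\in U$. Suppose a nonzero $\eta \in \Lg_x$ extends to $w \in \Kill(\widetilde M)$; for every lift $\tilde v$ of $v\in\Kill(M)$, the global Killing field $[w,\tilde v]$ has vanishing germ at $\tilde x$ (because $\eta$ centralizes $\langle v\rangle_x$), so Theorem~\ref{T:Gromov1}(3) forces $[w,\tilde v]=0$ identically and hence $w\in\cW$. Then $\eta = \lambda_{\tilde x}(w) \in \Lg_x\cap\cW_x$, but $[\Lg_x,\Lg_x\cap\cW_x]\subset[\Lg_x,\cW_x]=0$ together with simplicity gives $\Lg_x\cap\cW_x \subset Z(\Lg_x) = 0$, a contradiction; independence of $\tilde x \in \pi^{-1}(x)$ is immediate since any two choices differ by a deck transformation $\gamma$ and $d\gamma(w)$ is then an extension at $\gamma\tilde x$. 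This completes (2)(iii).

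The principal obstacle is the Levi extraction of $\Lg_x$ as a genuine simple subalgebra (rather than a mere subquotient) of $\Lh_x$, which relies essentially on the simplicity of $\Lg$ to pin down a unique simple factor of $\Lm_x$ mapping isomorphically; the remainder is bookkeeping with the exact sequence of Lemma~\ref{L:exact} and the germ-to-global rigidity of Theorem~\ref{T:Gromov1}(3).
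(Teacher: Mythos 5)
Your proposal is correct and follows essentially the same route as the paper's proof: apply Theorem~\ref{T:main}(2) with $\cV=\Kill(M)$, $\cW=\Kill(\widetilde M)^{\Kill(M)}$, extract $U$ via Corollary~\ref{C:constructible} and Lemma~\ref{L:locallyfree}, obtain $\Lz_x$ from Lemma~\ref{L:subquotient3}, pull out a simple factor $\Lg_x$ from a Levi decomposition, and run the dichotomy through the exact sequence of Lemma~\ref{L:exact} together with the simplicity of $\Lg_x$ and the injectivity of $\Kill(\widetilde M)\to\Kill^\germ_{\tilde x}(\widetilde M)$ from Theorem~\ref{T:Gromov1}(3). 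The only cosmetic difference is that you re-derive the fact that $\La_x^\cW$ is an ideal of $\Lh_x$ by a direct Jacobi computation, whereas the paper simply reads this off from Lemma~\ref{L:exact} (which exhibits $\La_x^\cW$ as the kernel of a Lie algebra homomorphism), and you spell out the Levi-decomposition argument the paper compresses into one sentence.
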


To prove Theorem \ref{T:rep-smooth}, we need the following
well-known result. We sketch the proof for completeness.

\begin{lemma}\label{L:locallyfree}
Let $M$ be a connected $C^\varepsilon$ manifold, and let $G$ be a
connected semisimple Lie group without compact factors. Suppose that
$G$ acts faithfully on $M$ by $C^\varepsilon$ diffeomorphisms and
preserves a finite smooth measure $\mu$ on $M$. Suppose also that
either $G$ has a finite center or $\varepsilon=\omega$. Then $G$
acts locally freely on a $G$-invariant open dense $\mu$-conull
subset of $M$.
\end{lemma}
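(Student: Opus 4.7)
The plan is to invoke the Borel density theorem for the $G$-equivariant measurable map
\[
\phi\colon M\to\Gr(\Lg),\qquad x\mt\Lg(x),
\]
where $G$ acts on $\Gr(\Lg)$ via the adjoint representation, so $\phi(gx)=\Ad(g)\phi(x)$. The pushforward $\phi_*\mu$ is then a finite $\Ad(G)$-invariant Borel measure on $\Gr(\Lg)$. Since $G$ is connected semisimple without compact factors, Proposition \ref{P:rho-discompact} shows that $G$ is $\Ad$-discompact, so Corollary \ref{C:Borel} (applied with $V=\Gr(\Lg)$ and trivial $H$) forces $\phi_*\mu$ to concentrate on the set of $\Ad(G)$-fixed points in $\Gr(\Lg)$, i.e., on the $\Ad(G)$-invariant subspaces of $\Lg$. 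These are precisely the ideals of $\Lg$, of which there are only finitely many since $\Lg$ is semisimple. Hence $M=\bigsqcup_{\Lh\lhd\Lg}M_\Lh$ up to a $\mu$-null set, where $M_\Lh=\phi^{-1}(\{\Lh\})$.

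The crux is to prove $\mu(M_\Lh)=0$ whenever $\Lh\ne 0$. For such an $\Lh$, let $H$ be the corresponding nontrivial connected normal Lie subgroup of $G$; for every $x\in M_\Lh$ the identity component $G(x)_0$ of the stabilizer has Lie algebra $\Lh$, and by uniqueness of connected Lie subgroups with a prescribed Lie algebra $G(x)_0=H$. In particular $H$ fixes $x$, so $M_\Lh\subset\{y\in M:hy=y\text{ for all }h\in H\}$, which is a closed $G$-invariant set (closed by continuity of the $H$-action, $G$-invariant by normality of $H$ in $G$). In the analytic case this fixed set is a closed analytic subset of the connected analytic manifold $M$; since the smooth measure $\mu$ assigns zero mass to every proper closed analytic subset, it either equals $M$ or has $\mu$-measure zero. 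The former would force the nontrivial $H$ to act trivially on $M$, contradicting the faithfulness of the $G$-action; so $\mu(M_\Lh)=0$. In the smooth case with $G$ of finite center, $H$ is itself a closed noncompact semisimple subgroup with finite center, and the same conclusion $\mu(M_\Lh)=0$ holds by a standard density-point analysis: at a Lebesgue density point $x_0\in M_\Lh$, each vector field induced by $X\in\Lh$ vanishes on a density-one neighborhood of $x_0$, which forces all its jets at $x_0$ to vanish; the finite-center, no-compact-factor structure of $H$, combined with faithfulness of the ambient $G$-action, then yields a contradiction (cf.\ \cite[proof of Cor.~3.6]{Zi93}).

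Having ruled out positive mass on nonzero ideals, the set $U=\{x\in M:\Lg(x)=0\}$ is $\mu$-conull, $G$-invariant (since $\Lg(gx)=\Ad(g)\Lg(x)$), and open (by upper semicontinuity of $x\mt\dim\Lg(x)$). Because $\mu$ is smooth and positive on every nonempty open set, the $\mu$-conull open set $U$ is automatically dense in $M$, giving the required $G$-invariant open dense $\mu$-conull subset on which $G$ acts locally freely. The principal obstacle is the vanishing $\mu(M_\Lh)=0$ in the smooth case: unlike the analytic setting, a closed subset of positive smooth measure can have empty interior, so closedness of the $H$-fixed set alone is not enough; one must genuinely exploit the finite-center structure of $H$ to pass from jet-vanishing at density points to an infinitesimal contradiction with faithfulness.
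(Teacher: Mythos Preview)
Your overall strategy coincides with the paper's: both reduce, via Borel density for the map $x\mapsto\Lg(x)\in\Gr(\Lg)$, to showing that the fixed-point set $M^H$ of each nontrivial connected normal subgroup $H$ of $G$ has $\mu$-measure zero (the paper phrases this in terms of the simple factors $G_i$ and their fixed sets $M_i=M^{G_i}$, which is equivalent since every nonzero ideal contains some simple ideal). Your treatment of the analytic case---$M^H$ is a proper closed analytic subset, hence $\mu$-null---also matches the paper.

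The gap is in the smooth, finite-center case. Your density-point argument is correct as far as it goes: if $x_0$ is a Lebesgue density point of $M^H$, then indeed every jet $j^k_{x_0}\iota(X)$ vanishes for $X\in\Lh$ and all $k$. But this does \emph{not} contradict faithfulness. In the $C^\infty$ category a vector field can vanish to infinite order at a point without being identically zero nearby, so $H$ may act trivially on every finite jet space at $x_0$ and still act nontrivially on $M$; there is no rigid structure here to invoke Theorem~\ref{T:Gromov1}(3). The finite-center hypothesis does not close this gap either: it controls $\ker(\Ad_H)$, not the jets of the $H$-action on $M$. Your reference to \cite[proof of Cor.~3.6]{Zi93} is also misplaced---that argument yields the Borel-density step $\Lg(x)\lhd\Lg$ a.e., not the vanishing $\mu(M^H)=0$.

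The paper handles this step by citing \cite[Prop.~4]{Zi87}, a nontrivial result whose proof does not proceed via jet-vanishing at density points. You should either invoke that proposition directly or supply a genuinely complete argument; the sketch you give, and the honest caveat in your final paragraph, do not resolve the issue.
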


\begin{proof}
The proof is similar to that for the simple group case given in
\cite{CQ03,Zi93}. Let $$U=\{x\in M\mid G(x) \text{ is discrete}\}.$$
Then $U$ is $G$-invariant and open. Since $\mu$ is smooth, it
suffices to prove that $U$ is $\mu$-conull. Let $G_i$ $(1\le i\le
k)$ be the simple factors of $G$, and let $M_i=M^{G_i}$. By Remark
\ref{R:locallyfree}, there exists a $\mu$-conull set $M'\subset M$
such that for every $x\in M'$, $G(x)_0$ is a normal subgroup of $G$,
hence is the product of some $G_i$. This implies that $M'\subset
U\cup\bigcup_{i=1}^kM_i$. Thus it suffices to show that
$\mu(M_i)=0$. If $G$ has a finite center, this follows from
\cite[Prop. 4]{Zi87}. If $\varepsilon=\omega$, then $M_i$ is a
proper analytic subset of $M$ and hence, by \cite[Cor. 3.6]{CQ03},
has measure zero.
\end{proof}

\begin{proof}[Proof of Theorem \ref{T:rep-smooth}]
Let $\cW=\Kill(\widetilde{M})^{\Kill(M)}$. For $x\in M$, let $\cG_x$
be the image of $\cG$ under the injection
$\Kill(M)\to\Kill^{\germ}_x(M)$, and denote
$$\La_x=\Kill^\germ_x(M)^{\Kill(M)}, \quad \Lh_x=\Hull_\Gamma^\cW(\La_x).$$
By Lemma \ref{L:discompact}(1), $G$ is discompact with respect to
the natural representation $G\to\GL(\Kill(M))$. Similar to the proof
of Theorem \ref{T:cen-rep}, we obtain a $G$-saturated constructible
set $M'\subset M_\reg$ with $\mu(M_\reg\backslash M')=0$ such that
for every $x\in M'$, there exist $\Lz_x<\Lh_x$ and a Lie algebra
structure on $T_xGx$ such that $\ev_x(\Lz_x)=T_xGx$, and such that
$\iota_x:\Lg\to T_xGx$ and $-\ev_x:\Lz_x\to T_xGx$ are Lie algebra
homomorphisms. Since $\mu$ is smooth, by Corollary
\ref{C:constructible}, $M'$ contains a $G$-invariant open dense
subset $U_1\subset M$. By Lemma \ref{L:locallyfree}, there exists a
$G$-invariant open dense subset $U_2\subset M$ such that $\Lg(x)=0$
for every $x\in U_2$. Thus for $x\in U=U_1\cap U_2$, $\iota_x$ is an
isomorphism of Lie algebras. In particular, $T_xGx$ is simple. By
using a Levi decomposition of $\Lz_x$, it is easy to see that there
exists $\Lg_x<\Lz_x$ such that $-\ev_x:\Lg_x\to T_xGx$ is an
isomorphism. Obviously, $\Lg_x$ satisfies (i) and (ii) in assertion
(2).

We prove the theorem by showing that either assertion (1) holds, or
for every $x\in U$, $\Lg_x$ satisfies (iii) in assertion (2). Note
that the latter means that if
\begin{equation}\label{E:rep-smooth}
x\in U, \quad \tilde{x}\in\pi^{-1}(x), \quad
w\in\Kill(\widetilde{M}), \quad \text{and} \quad
\lambda_{\tilde{x}}(w)\in\Lg_x,
\end{equation}
then $w=0$. For $x\in U$, by Lemma \ref{L:exact}, there exists a
homomorphism
$\Phi_x:\Lh_x\to\LL\left(\overline{\overline{\rho(\Gamma)}}\right)$
such that $\ker(\Phi_x)=\La_x^\cW$, where $\La_x^\cW$ is as in
\eqref{E:a_x^W}. Since $\ker(\Phi_x)\cap\Lg_x\lhd\Lg_x$, it is equal
to $0$ or $\Lg_x$. If there exists $x\in U$ such that
$\ker(\Phi_x)\cap\Lg_x=0$, then
$$\Lg\cong\Lg_x\cong\Phi_x(\Lg_x)<\LL\left(\overline{\overline{\rho(\Gamma)}}\right),$$
and hence we get assertion (1). Otherwise, for every $x\in U$, we
have $\ker(\Phi_x)\cap\Lg_x=\Lg_x$, and hence
$\Lg_x\subset\La_x^\cW$. In this case, \eqref{E:rep-smooth} implies
that $\lambda_{\tilde{x}}(w)\in\La_x$, hence centralizes the image
of $\Kill(M)\to\Kill_x^\germ(M)$. Since the restriction of
$\lambda_{\tilde{x}}$ to $\Kill(\widetilde{M})$ is injective, $w$
centralizes the image of $\Kill(M)\to\Kill(\widetilde{M})$. So
$w\in\cW$. This implies that
$[\lambda_{\tilde{x}}(w),\Lg_x]\subset[\lambda_{\tilde{x}}(w),\La_x^\cW]=0$.
Thus $\lambda_{\tilde{x}}(w)\in Z(\Lg_x)=0$, and hence $w=0$. This
completes the proof.
\end{proof}

A special case of Gromov's open dense orbit theorem is that if $G$ acts
topologically transitively on $M$, then $\Iso^\germ(M)$ has an open
dense orbit. The following consequence of Theorem \ref{T:main}(1)
asserts that under certain conditions, a subgroupoid of
$\Iso^\germ(M)$ already has an open dense orbit.

\begin{theorem}\label{T:opendense}
Let $M$ be a connected $C^\varepsilon$ manifold with a rigid
$C^\varepsilon$ $\A$-structure, $G$ be a connected Lie group which
acts on $M$ by $C^\varepsilon$ isometries, and $\cV$ be a
$G$-invariant subspace of $\Kill(M)$ such that $G$ is
$\rho_\cV$-discompact. Suppose that there exists a finite smooth
measure $\mu$ on $M$ such that $\ker(\rho_\cV)\cdot G_\mu=G$.
\begin{itemize}
\item[(1)] If the $G$-action is topologically transitive, then $\Iso^\germ(M)^\cV$ has an open dense
orbit.
\item[(2)] If the $G$-action is minimal, then $\Iso^\germ(M)^\cV$
is transitive on $M$.
\end{itemize}
\end{theorem}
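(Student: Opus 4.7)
The plan is to specialize Theorem \ref{T:main} to the case $\cW=0$, in which case $\Hull_\Gamma^\cW(\Iso^\germ(M)^\cV)$ degenerates to $\Iso^\germ(M)^\cV$ itself. Theorem \ref{T:main}(1) then produces a $G$-saturated constructible set $M'\subset M_\reg$ with $\mu(M_\reg\setminus M')=0$ such that for every $x\in M'$ the $\Iso^\germ(M)^\cV$-orbit of $x$ contains a $G$-invariant open dense subset of $\overline{Gx}$. Because $\mu$ is smooth, Corollary \ref{C:constructible} applies and upgrades $M'$ to contain a $G$-invariant open dense subset $U\subset M$.

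For part (1), topological transitivity gives some $x_0\in M$ with $\overline{Gx_0}=M$. Since $U$ is open and $Gx_0$ is dense, there is $g\in G$ with $x:=gx_0\in U\subset M'$, and of course $\overline{Gx}=M$. Applying the conclusion of Theorem \ref{T:main}(1) at this $x$ shows that the $\Iso^\germ(M)^\cV$-orbit of $x$ is an open dense subset of $M$.

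Before turning to part (2), I would verify that the orbits of $\Iso^\germ(M)^\cV$ are $G$-invariant: for $\varphi\in\Iso^\germ(M)^\cV$ with source $y$ and target $z$, and $g\in G$, the conjugated germ $\langle g\rangle_z\circ\varphi\circ\langle g^{-1}\rangle_{gy}$ sends $gy$ to $gz$ and still centralizes $\cV$, because $\cV$ is $G$-invariant and $dg$ sends germs $\langle v\rangle$ to $\langle dg\cdot v\rangle$. Now for part (2), minimality implies topological transitivity, so by (1) there is an open dense $\Iso^\germ(M)^\cV$-orbit $O$, which is $G$-invariant by what we just checked. If any other orbit $O'$ existed, it would be nonempty and $G$-invariant, hence dense by minimality; but it would also be contained in the closed nowhere dense set $M\setminus O$, a contradiction. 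Hence $O=M$.

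The argument is essentially a direct specialization of Theorem \ref{T:main}, and no serious obstacle is expected; the only mild subtlety is the $G$-invariance of $\Iso^\germ(M)^\cV$-orbits, which is what allows minimality to convert an open dense orbit into a single orbit equal to $M$.
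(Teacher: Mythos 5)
Your specialization to $\cW=0$ and the use of Corollary~\ref{C:constructible} are exactly what the paper does, and your observation that the dense $G$-orbit $Gx_0$ must meet the $G$-invariant open dense set $U$ is equivalent to the paper's remark that any $G$-invariant open dense set must contain $x_0$. The $G$-invariance check for orbits of $\Iso^\germ(M)^\cV$ is correct, and your argument for part~(2) via minimality is sound (the paper uses an even shorter route: $U$ itself is a nonempty $G$-invariant open set, so minimality forces $U=M$, hence $O=M$, without needing $G$-invariance of the groupoid orbits).

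However, there is a genuine gap in part~(1). Theorem~\ref{T:main}(1) only gives that the $\Iso^\germ(M)^\cV$-orbit $O$ of $x_0$ \emph{contains} a $G$-invariant open dense subset $U$ of $M$; this makes $O$ dense, but it does \emph{not} make $O$ open. (A set containing an open dense subset need not be open: add a boundary point.) The statement you must prove asserts an \emph{open} dense orbit, so openness is a separate claim. The paper closes this gap with an explicit local argument: for $z\in O$ take $\varphi\in\Iso^\germ(M)^\cV$ with source $x_0$ and target $z$, realize $\varphi$ by a local isometry $f$ defined on a neighborhood $U_f\subset U\subset O$ of $x_0$, and use the fact that the germ equations $d\varphi(\langle v\rangle_{x_0})=\langle v\rangle_z$ for $v\in\cV$ propagate to an equality of local vector fields $df(v|_{U'_f})=v|_{f(U'_f)}$ on a possibly smaller neighborhood $U'_f$ (since $\cV$ is finite-dimensional and two analytic/smooth germs of Killing fields that agree as germs agree on a neighborhood). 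Hence $\langle f\rangle_y\in\Iso^\germ(M)^\cV$ for all $y\in U'_f$, so $f(U'_f)\subset O$ is an open neighborhood of $z$ inside $O$. Without this step your proof only establishes a dense orbit, not an open dense one. (Your argument for part~(2) happens to survive the gap, because it only uses that $O$ contains the open dense set $U$; but part~(1) as stated is not fully proved.)
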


\begin{proof}
By the $\cW=0$ case of Theorem \ref{T:main}(1) and Corollary
\ref{C:constructible}, there exists a $G$-invariant open dense
subset $U_0$ of $M$ such that for every $x\in U_0$, the
$\Iso^\germ(M)^\cV$-orbit of $x$ contains a $G$-invariant open dense
subset of $\overline{Gx}$. Let $x_0\in M$ be such that
$\overline{Gx_0}=M$. Then any $G$-invariant open subset of $M$ must
contains $x_0$. In particular, we have $x_0\in U_0$. Hence the
$\Iso^\germ(M)^\cV$-orbit $O$ of $x_0$ contains a $G$-invariant open
dense subset $U$ of $M$. Obviously, $O$ is dense in $M$. Thus to
prove Theorem \ref{T:opendense}(1), it suffices to show that $O$ is
open. Let $x\in O$. Then there exists $\varphi\in\Iso^\germ(M)^\cV$
with source $x_0$ and target $x$. Let $f$ be a local isometry
defined on an open neighborhood $U_f$ of $x_0$ such that $\langle
f\rangle_{x_0}=\varphi$. Since $x_0\in U$, we may assume that
$U_f\subset U\subset O$. Since $\varphi$ centralizes $\cV$, we have
$\langle df(v|_{U_f})\rangle_x=\langle v\rangle_x$ for all
$v\in\cV$. So there exists an open neighborhood $U'_f\subset U_f$ of
$x_0$ such that $df(v|_{U'_f})=v|_{f(U'_f)}$ for all $v\in\cV$.
Hence $\langle f\rangle_y\in\Iso^\germ(M)^\cV$ for all $y\in U'_f$.
This, together with the fact $U'_f\subset O$, implies that
$f(U'_f)\subset O$. Note that $f(U'_f)$ is a neighborhood of $x$. So
$O$ is open. This proves Theorem \ref{T:opendense}(1). If
furthermore the $G$-action is minimal, then as a $G$-invariant open
set, $U$ must be equal to $M$. Hence $O=M$. Thus $\Iso^\germ(M)^\cV$
is transitive on $M$. This proves Theorem \ref{T:opendense}(2).
\end{proof}

\begin{remark}\label{R:opendense}
\begin{itemize}
\item[(1)] If $\cV=0$, then $G$ is $\rho_\cV$-discompact and the condition
$\ker(\rho_\cV)\cdot G_\mu=G$ is satisfied for any finite smooth
measure $\mu$. Thus in this case Theorem \ref{T:opendense} reduces
to the above special case of Gromov's open dense orbit theorem.
\item[(2)] By Lemma \ref{L:discompact}, if $\mu$ is $G$-invariant, then the conditions of Theorem
\ref{T:opendense} are satisfied if $G$ is semisimple without compact
factors and $\cV=\Kill(M)$, or if $R(G)$ is split solvable, $G/R(G)$
has no compact factors, and $\cV=\cG$.
\end{itemize}
\end{remark}

\section{Proofs of Theorems \ref{T:Gromov}--\ref{T:fixedpoint2}}\label{S:proof2}

In this section we first state two special cases of Theorem
\ref{T:cen-rep} (Corollaries \ref{C:cen-rep1} and \ref{C:cen-rep2}
below), and then use them to prove Theorems
\ref{T:Gromov}--\ref{T:fixedpoint2}.

\begin{corollary}\label{C:cen-rep1}
Let $M$ be a connected analytic manifold with a rigid unimodular
analytic $\A$-structure of finite volume, let $G$ be a connected Lie
subgroup of $\Iso(M)$, and let $\cV\supset\cG$ be a $G$-invariant
subspace of $\Kill(M)$ such that $G$ is $\rho_\cV$-discompact. Then
there exist a $G$-invariant open dense subset $U\subset M$ and a
representation $\rho:\pi_1(M)\to\GL(d,\RR)$ such that
\begin{itemize}
\item[(1)] $T_{\tilde{x}}\widetilde{G}\tilde{x}\subset\ev_{\tilde{x}}(\Kill(\widetilde{M})^\cV)$ for every
$\tilde{x}\in\pi^{-1}(U)$, and
\item[(2)] for every $x\in
U$, we have $\Lg(x)\lhd\Lg$ and
$\ad(\Lg/\Lg(x))\prec\LL\left(\overline{\overline{\rho(\pi_1(M))}}\right)$.
\end{itemize}
\end{corollary}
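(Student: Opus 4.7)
The plan is to apply Theorem \ref{T:cen-rep} with the canonical smooth measure arising from unimodularity, and then upgrade the resulting $\mu$-conull constructible set to a $G$-invariant open dense subset via Corollary \ref{C:constructible}.

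Since $\sigma$ is unimodular, the construction reviewed in Section \ref{S:review} produces a smooth $\Iso(M)$-invariant volume density on $M$, hence a smooth measure $\mu$; the finite-volume hypothesis makes $\mu(M) < \infty$, and $G \subset \Iso(M)$ trivially preserves $\mu$. Combined with the standing assumption that $G$ is $\rho_\cV$-discompact with $\cV \supset \cG$, all hypotheses of Theorem \ref{T:cen-rep} are in force.

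Theorem \ref{T:cen-rep} then yields a $G$-saturated (in particular $G$-invariant) constructible set $M' \subset M_\reg$ with $\mu(M_\reg \setminus M') = 0$ on which both of its conclusions hold pointwise. Because $\mu$ is smooth, Corollary \ref{C:constructible}, applied to the open dense set $M_\reg$, provides a $G$-invariant open dense subset $U \subset M$ with $U \subset M'$. By Theorem \ref{T:Gromov1}(3) applied to the rigid $\A$-structure induced on $\widetilde{M}$, the space $\Kill(\widetilde{M})$ is finite-dimensional, hence so is $\Kill(\widetilde{M})^\cV$. Taking $\rho : \pi_1(M) \to \GL(\Kill(\widetilde{M})^\cV) \cong \GL(d,\RR)$ to be the representation induced by deck transformations, assertion (1) of the corollary (for every $\tilde{x} \in \pi^{-1}(U)$) is the last clause of Theorem \ref{T:cen-rep}(1) in the analytic case, and assertion (2) combines the normality statement $\Lg(x) \lhd \Lg$ from Theorem \ref{T:cen-rep}(1) with the subquotient statement of Theorem \ref{T:cen-rep}(2).

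No step poses a genuine obstacle: the corollary is simply a topological repackaging of Theorem \ref{T:cen-rep} adapted to the unimodular analytic finite-volume setting. The one point requiring care is that Corollary \ref{C:constructible} applies only because $\mu$ is smooth, which is precisely what unimodularity supplies; without it one would be stuck with a merely $\mu$-conull subset rather than an open dense one.
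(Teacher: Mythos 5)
Your proof is correct and follows essentially the same route as the paper: invoke Theorem \ref{T:cen-rep} with the finite smooth $G$-invariant measure coming from unimodularity, then use Corollary \ref{C:constructible} to extract a $G$-invariant open dense $U$ inside the $G$-saturated constructible conull set. The identification of $\rho$ with the deck transformation representation on $\Kill(\widetilde{M})^\cV$ and the matching of the two clauses to the two parts of Theorem \ref{T:cen-rep} are exactly as intended.
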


\begin{proof}
The finite smooth measure $\mu$ induced by the unimodular structure
is preserved by $G$. By Theorem \ref{T:cen-rep}, the conclusions of
the corollary hold with $U$ replaced by a $G$-saturated
constructible set $M'\subset M_\reg$ with $\mu(M_\reg\setminus
M')=0$. Since $\mu$ is smooth, by Corollary \ref{C:constructible},
$M'$ contains a $G$-invariant open dense subset $U$ of $M$. This
completes the proof.
\end{proof}

\begin{remark}\label{R:cocompact}
Let $G$ be the discompact radical of $\Iso(M)$ with respect to the
natural representation of $\Iso(M)$ in $\Kill(M)$ (see Proposition
\ref{P:discompact-radical}). Then $G$ is a connected closed normal
subgroup of $\Iso(M)$ such that $\Iso(M)/G$ is locally isomorphic to
a compact Lie group, and the conditions of Corollary
\ref{C:cen-rep1} are satisfied for $\cV=\Kill(M)$. This justifies
Remark \ref{R:cen-rep}(2).
\end{remark}

\begin{corollary}\label{C:cen-rep2}
Let $M$ be a connected compact analytic manifold with a rigid
analytic $\A$-structure, and let $G$ be a connected split solvable
Lie group which acts analytically and isometrically on $M$. Then for
any $G$-minimal set $M_0\subset M$ the following assertions hold.
\begin{itemize}
\item[(1)] If $\cV\supset\cG$ is a $G$-invariant
subspace of $\Kill(M)$ such that $G$ is $\rho_\cV$-discompact, then
for any $\tilde{x}\in\pi^{-1}(M_0)$, we have
$T_{\tilde{x}}\widetilde{G}\tilde{x}\subset
\ev_{\tilde{x}}(\Kill(\widetilde{M})^\cV)$. In particular, we always
have $T_{\tilde{x}}\widetilde{G}\tilde{x}\subset
\ev_{\tilde{x}}(\Kill(\widetilde{M})^\cG)$ for any
$\tilde{x}\in\pi^{-1}(M_0)$.
\item[(2)] There exists a
representation $\rho:\pi_1(M)\to\GL(d,\RR)$ such that for any $x\in
M_0$, we have $\Lg(x)\lhd\Lg$ and
$\ad(\Lg/\Lg(x))\prec\LL\left(\overline{\overline{\rho(\pi_1(M))}}\right)$.
\end{itemize}
\end{corollary}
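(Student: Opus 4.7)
The strategy is to reduce Corollary \ref{C:cen-rep2} to Theorem \ref{T:cen-rep} by manufacturing a $G$-invariant probability measure on $M_0$ and then promoting the $\mu$-a.e. conclusion of Theorem \ref{T:cen-rep} to every point of $M_0$ by means of Lemma \ref{L:minimal-saturated}. Since the hypotheses of Corollary \ref{C:cen-rep2} do not provide a finite invariant measure, this is the only delicate point, and it is exactly the reason the unimodularity/finite-volume assumption of Corollary \ref{C:cen-rep1} can be dropped.

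First, since $G$ is connected and split solvable it is solvable, hence amenable. Applying amenability to the continuous $G$-action on the nonempty compact $G$-invariant set $M_0$ yields a $G$-invariant Borel probability measure $\mu$ on $M$ supported on $M_0$. Because $\mu$ is $G$-invariant, $G_\mu=G$, so the side condition $\ker(\rho_\cV)\cdot G_\mu=G$ of Theorem \ref{T:cen-rep} is automatic, for every choice of $\cV$ we may wish to use.

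For assertion (1), I would feed the given $\cV$ and this $\mu$ into Theorem \ref{T:cen-rep}, obtaining a $G$-saturated constructible set $M'\subset M_\reg$ with $\mu(M_\reg\setminus M')=0$. Compactness of $M$ together with $\varepsilon=\omega$ lets me invoke Theorem \ref{T:Gromov2}(1) to conclude $M_\reg=M$, whence $\mu(M')=1>0$. Since $\mu$ is supported on the $G$-minimal set $M_0$ and $M'$ is a $G$-saturated Borel set of positive $\mu$-mass, Lemma \ref{L:minimal-saturated} forces $M_0\subset M'$. Applying Theorem \ref{T:cen-rep}(1) pointwise on $M_0$ (and using its analytic clause) then gives $T_{\tilde x}\widetilde{G}\tilde x\subset\ev_{\tilde x}(\Kill(\widetilde{M})^\cV)$ for every $\tilde x\in\pi^{-1}(M_0)$, which is the first assertion of (1). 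For the ``in particular'' clause of (1) and for (2), I would take $\cV=\cG$; Lemma \ref{L:discompact}(1), applied with $R(G)=G$ split solvable, $G/R(G)$ trivial, and the trivial filtration $\cG=\cV_0$, shows that $G$ is $\rho_\cG$-discompact, so the preceding argument applies with this $\cV$.

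Finally, Theorem \ref{T:cen-rep}(2) applied in the $\cV=\cG$ setup, together with $M_0\subset M'$, yields $\Lg(x)\lhd\Lg$ and $\ad(\Lg/\Lg(x))\prec\LL\!\left(\overline{\overline{\rho(\pi_1(M))}}\right)$ for every $x\in M_0$, where $\rho:\pi_1(M)\to\GL(\Kill(\widetilde{M})^\cG)$ is the representation induced by the deck transformations; setting $d=\dim\Kill(\widetilde{M})^\cG$ (finite by rigidity, see Theorem \ref{T:Gromov1}(3)) gives (2). I do not expect any genuine obstacle: the argument is essentially bookkeeping around three ingredients already in place, namely amenability of solvable groups (to produce $\mu$), Theorem \ref{T:Gromov2}(1) (to upgrade $M_\reg$ to all of $M$), and Lemma \ref{L:minimal-saturated} (to pass from $\mu$-a.e. to pointwise on $M_0$).
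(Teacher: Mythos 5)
Your proposal is correct and follows essentially the same route as the paper: amenability of the split solvable group $G$ to produce a $G$-invariant measure supported on $M_0$, the convention $M_\reg=M$ for compact analytic $M$ (via Theorem \ref{T:Gromov2}(1)), Lemma \ref{L:discompact}(1) for $\rho_\cG$-discompactness, Theorem \ref{T:cen-rep} for the $\mu$-conull $G$-saturated constructible set $M'$, and Lemma \ref{L:minimal-saturated} to conclude $M_0\subset M'$.
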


\begin{proof}
Since $G$ is solvable, it is amenable. So there exists a
$G$-invariant finite Borel measure $\mu$ on $M$ supported on $M_0$.
Since $M$ is analytic and compact, we have $M_\reg=M$. By Lemma
\ref{L:discompact}, $G$ is $\rho_\cG$-discompact. Thus Theorem
\ref{T:cen-rep} implies that the conclusions of the corollary hold
with $M_0$ replaced by a $G$-saturated constructible $\mu$-conull
subset $M'$ of $M$. By Lemma \ref{L:minimal-saturated}, we have
$M_0\subset M'$, and the corollary follows.
\end{proof}

\begin{remark}
By \cite[Lem. 2.6]{St}, $\Lg(x)$ is independent of the choice of
$x\in M_0$.
\end{remark}

Now we prove Theorems \ref{T:Gromov}--\ref{T:fixedpoint2}.

\begin{proof}[Proof of Theorem \ref{T:Gromov}]
We prove the theorem by showing that if $\Iso(M)_0$ is nontrivial
and has a discrete center, then it is compact and semisimple. Since $M$ is
compact, every Killing field on $M$ is complete. So the
infinitesimal action $\LL(\Iso(M))\to\Kill(M)$ induced by the
$\Iso(M)$-action is an $\Iso(M)$-equivariant isomorphism. Let $G$ be
the $\Ad$-discompact radical of $\Iso(M)$. Then $G$ is discompact
with respect to its natural representation in $\Kill(M)$, and
$\Iso(M)/G$ is locally isomorphic to a compact Lie group. Since $M$
is simply connected, the $\cV=\Kill(M)$ case of Corollary
\ref{C:cen-rep1}(1) implies that there exists an open dense subset
$U\subset M$ such that $T_xGx\subset\ev_x(Z(\Kill(M)))$ for every
$x\in U$. Since $\LL(\Iso(M))\cong\Kill(M)$ and $\Iso(M)_0$ has a
discrete center, we have $Z(\Kill(M))=0$. Thus for every $x\in U$ we
have $T_xGx=0$, i.e., $x$ is fixed by $G$. But $U$ is dense in $M$.
So $G$ acts trivially on $M$. Hence $G$ is trivial. Thus $\Iso(M)_0$
is locally isomorphic to a compact Lie group. But it has a discrete
center. So it is compact and semisimple.
\end{proof}

\begin{remark}\label{R:complete}
In the proof of Theorem \ref{T:Gromov}, the compactness of $M$ is
only used to ensure that every Killing field is complete. If the
geometric structure is a pseudo-Riemannian structure or a linear
connection plus a volume density, this remains true if $M$ is not
compact but geodesically complete (see \cite[Thm. VI.2.4]{KN}). This
justifies Remark \ref{R:Gromov}.
\end{remark}

The following lemma will be used in the proofs of Theorems
\ref{T:large} and \ref{T:fixedpoint2}.

\begin{lemma}\label{L:virtually}
Let $\Gamma$ be a group, and let $\rho:\Gamma\to\GL(d,\RR)$ be a
representation. If $\Gamma$ is virtually abelian (resp. virtually
nilpotent), then
$\LL\left(\overline{\overline{\rho(\Gamma)}}\right)$ is abelian
(resp. nilpotent).
\end{lemma}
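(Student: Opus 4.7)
The plan is to reduce to a finite-index subgroup on which the hypothesis gives commutativity or nilpotency, then transport this along the Zariski closure, and finally pass to identity components where the two closures agree. Concretely, let $\Gamma_0<\Gamma$ be a finite-index subgroup that is abelian (resp. nilpotent). Then $\rho(\Gamma_0)<\rho(\Gamma)$ also has finite index, so $\overline{\overline{\rho(\Gamma_0)}}$ has finite index in $\overline{\overline{\rho(\Gamma)}}$ (a finite union of cosets of $\overline{\overline{\rho(\Gamma_0)}}$ is Zariski closed and contains $\rho(\Gamma)$). In particular, the Hausdorff identity components agree:
\[
\overline{\overline{\rho(\Gamma_0)}}_0=\overline{\overline{\rho(\Gamma)}}_0,
\]
so they have the same Lie algebra, namely $\LL\!\left(\overline{\overline{\rho(\Gamma)}}\right)$.

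Next I would show that $\overline{\overline{\rho(\Gamma_0)}}$ inherits the algebraic identity satisfied by $\rho(\Gamma_0)$. For the abelian case, the map $\GL(d,\RR)\times\GL(d,\RR)\to\GL(d,\RR)$, $(x,y)\mapsto xyx^{-1}y^{-1}$, is regular, and its fiber over the identity is Zariski closed; since $\rho(\Gamma_0)\times\rho(\Gamma_0)$ maps into this fiber, so does $\overline{\overline{\rho(\Gamma_0)}}\times\overline{\overline{\rho(\Gamma_0)}}$. Thus $\overline{\overline{\rho(\Gamma_0)}}$ is abelian, hence so is its Lie algebra. For the nilpotent case, one applies the same observation to the iterated commutator maps $c_n(x_1,\dots,x_{n+1})=[x_1,[x_2,[\cdots,x_{n+1}]\cdots]]$, which are regular maps into $\GL(d,\RR)$; if $\Gamma_0$ is nilpotent of class $n$ then $c_n\equiv e$ on $\rho(\Gamma_0)^{n+1}$, hence by Zariski density on $\overline{\overline{\rho(\Gamma_0)}}^{\,n+1}$. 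So $\overline{\overline{\rho(\Gamma_0)}}$ is nilpotent, and its Lie algebra is nilpotent.

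Combining the two paragraphs, $\LL\!\left(\overline{\overline{\rho(\Gamma)}}\right)=\LL\!\left(\overline{\overline{\rho(\Gamma_0)}}\right)$ is abelian (resp. nilpotent), as required. There is no real obstacle here; the only point requiring care is the standard fact that a polynomial identity holding on a set passes to its Zariski closure, applied to the defining word identities of abelian and nilpotent groups.
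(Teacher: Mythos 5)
Your proof is correct and follows essentially the same route as the paper: reduce to a finite-index abelian (resp.\ nilpotent) subgroup $\Gamma_0$, observe $\overline{\overline{\rho(\Gamma_0)}}$ has finite index in $\overline{\overline{\rho(\Gamma)}}$ so they share a Lie algebra, and conclude from abelianness (resp.\ nilpotency) of $\overline{\overline{\rho(\Gamma_0)}}$. The only difference is cosmetic: the paper cites Borel (\cite[p.~60, Cor.~2]{Bo91}) for the fact that the Zariski closure of an abelian (resp.\ nilpotent) subgroup is abelian (resp.\ nilpotent), whereas you supply the standard one-variable-at-a-time density argument with commutator words in its place.
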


\begin{proof}
Let $\Gamma_0$ be an abelian (resp. nilpotent) subgroup of $\Gamma$
of finite index. By \cite[p. 60, Cor.2]{Bo91},
$\LL\left(\overline{\overline{\rho(\Gamma_0)}}\right)$ is abelian
(resp. nilpotent). Let $\gamma_1,\ldots,\gamma_k\in\Gamma$ be such
that $\Gamma=\bigcup_{i=1}^k\gamma_i\Gamma_0$. Then
$\overline{\overline{\rho(\Gamma)}}=\bigcup_{i=1}^k\rho(\gamma_i)\overline{\overline{\rho(\Gamma_0)}}$.
Thus $\overline{\overline{\rho(\Gamma_0)}}$ is a closed subgroup of
finite index in $\overline{\overline{\rho(\Gamma)}}$. Hence
$\LL\left(\overline{\overline{\rho(\Gamma)}}\right)=\LL\left(\overline{\overline{\rho(\Gamma_0)}}\right)$
is abelian (resp. nilpotent).
\end{proof}

\begin{proof}[Proof of Theorem \ref{T:large}]
We may view $G$ as a connected Lie subgroup of $\Iso(M)$. Since $G$
is split solvable, by Lemma \ref{L:discompact}(1), it is
$\rho_\cG$-discompact. Then the $\cV=\cG$ case of Corollary
\ref{C:cen-rep1}(2) implies that there exist an open dense subset
$U\subset M$ and a representation $\rho:\Gamma\to\GL(d,\RR)$ such
that for every $x\in U$, we have $\Lg(x)\lhd\Lg$ and
$\ad(\Lg/\Lg(x))\prec\LL\left(\overline{\overline{\rho(\Gamma)}}\right)$,
where $\Gamma=\pi_1(M)$. Note that since $U$ is dense in $M$ and the
$G$-action on $M$ is faithful, we have $\bigcap_{x\in U}\Lg(x)=0$.
We prove the theorem by showing that if $\Gamma$ is finite (resp.
virtually abelian, virtually nilpotent), then $G$ is abelian (resp.
at most $2$-step nilpotent, nilpotent).

(1) If $\Gamma$ is finite, then
$\LL\left(\overline{\overline{\rho(\Gamma)}}\right)=0$. So for every
$x\in U$ we have $\ad(\Lg/\Lg(x))=0$, and hence $\Lg/\Lg(x)$ is
abelian. This implies that $[\Lg,\Lg]\subset\Lg(x)$ for every $x\in
U$. So $[\Lg,\Lg]\subset\bigcap_{x\in U}\Lg(x)=0$. Hence $G$ is
abelian.

(2) If $\Gamma$ is virtually abelian, then by Lemma
\ref{L:virtually},
$\LL\left(\overline{\overline{\rho(\Gamma)}}\right)$ is abelian. So
for every $x\in U$, $\ad(\Lg/\Lg(x))$ is abelian. This implies that
$\Lg/\Lg(x)$ is at most $2$-step nilpotent, and hence
$[\Lg,[\Lg,\Lg]]\subset\Lg(x)$ for every $x\in U$. Thus
$[\Lg,[\Lg,\Lg]]\subset\bigcap_{x\in U}\Lg(x)=0$. Hence $G$ is at
most $2$-step nilpotent.

(3) If $\Gamma$ is virtually nilpotent, then by Lemma
\ref{L:virtually},
$\LL\left(\overline{\overline{\rho(\Gamma)}}\right)$ is nilpotent.
So for every $x\in U$, $\ad(\Lg/\Lg(x))$, hence $\Lg/\Lg(x)$, is
nilpotent. This implies that $\Lg_\infty\subset\Lg(x)$ for every
$x\in U$, where $\Lg=\Lg_1\supset\Lg_2\supset\cdots$ is the lower
central series of $\Lg$ and $\Lg_\infty=\bigcap_{i=1}^\infty\Lg_i$.
Thus $\Lg_\infty\subset\bigcap_{x\in U}\Lg(x)=0$. Hence $G$ is
nilpotent.
\end{proof}

\begin{proof}[Proof of Theorem \ref{T:fixedpoint1}]
Let $M_0\subset M$ be a $G$-minimal set. Since $M$ is compact, we
have $\LL(\Iso(M))\cong\Kill(M)$, and the adjoint representation of
$\Iso(M)$ is equivalent to the natural representation of $\Iso(M)$
in $\Kill(M)$. Thus $G$ is $\rho_\cV$-discompact for $\cV=\Kill(M)$,
and we have $Z(\Kill(M))=0$. By Corollary \ref{C:cen-rep2}(1), for
every $x\in M_0$ we have $T_xGx\subset\ev_x(Z(\Kill(M)))=0$. So $x$
is fixed by $G$. Hence $G$ acts trivially on $M_0$. But $M_0$ is
$G$-minimal. So it consists of a single point.
\end{proof}

\begin{remark}\label{R:fixedpoint1}
If $H$ is a connected noncompact semisimple Lie subgroup of
$\Iso(M)$ with an Iwasawa decomposition $H=KAN$, then $G=AN$
satisfies condition (1) in Theorem \ref{T:fixedpoint1}. Indeed, if
we denote the adjoint representation of $\Iso(M)$ by $\rho$, then
$\rho(H)=\rho(K)\rho(A)\rho(N)$ is an Iwasawa decomposition of
$\rho(H)$. Thus
$\overline{\overline{\rho(G)}}=\overline{\overline{\rho(A)\rho(N)}}$
is an $\RR$-split solvable real algebraic group, and hence is
discompact by Corollary \ref{C:discompact}.
\end{remark}

\begin{proof}[Proof of Theorem \ref{T:fixedpoint2}]
Let $M_0\subset M$ be a $G$-minimal set. By Corollary
\ref{C:cen-rep2}(2), there exists a representation
$\rho:\Gamma\to\GL(d,\RR)$ such that for every $x\in M_0$, we have
$\Lg(x)\lhd\Lg$ and
$\ad(\Lg/\Lg(x))\prec\LL\left(\overline{\overline{\rho(\Gamma)}}\right)$,
where $\Gamma=\pi_1(M)$. We prove that under either condition of the
theorem, we have $[\Lg,\Lg]\subset\Lg(x)$ for every $x\in M_0$.

(1) Suppose that $\Gamma$ is finite. Then
$\LL\left(\overline{\overline{\rho(\Gamma)}}\right)=0$. This implies
that $\ad(\Lg/\Lg(x))=0$ for every $x\in M_0$. So $\Lg/\Lg(x)$ is
abelian, and hence $[\Lg,\Lg]\subset\Lg(x)$.

(2) Suppose that $\Gamma$ is virtually nilpotent and
$[\Lg,[\Lg,\Lg]]=[\Lg,\Lg]$. Let
$\Lg=\Lg_1\supset\Lg_2\supset\cdots$ be the lower central series of
$\Lg$, and let $\Lg_\infty=\bigcap_{i=1}^\infty\Lg_i$. Then
$\Lg_\infty=[\Lg,\Lg]$. By Lemma \ref{L:virtually},
$\LL\left(\overline{\overline{\rho(\Gamma)}}\right)$ is nilpotent.
So for every $x\in M_0$, $\ad(\Lg/\Lg(x))$, and hence $\Lg/\Lg(x)$,
is nilpotent. Thus $[\Lg,\Lg]=\Lg_\infty\subset\Lg(x)$.

Now let $x\in M_0$. Since $[\Lg,\Lg]\subset\Lg(x)$, we have
$(G,G)\subset G(x)$. This means that $x$ is fixed by $(G,G)$. So
$(G,G)$ acts trivially on $M_0$. This completes the proof.
\end{proof}

\section*{Acknowledgement} The author would like to thank David Fisher for his interest in this work. He also thank the referee for helpful comments.

\end{document}